\numberwithin{equation}{section}
\pgfplotsset{compat=newest}
\tikzset{
    >=stealth',
    punkt/.style={
           rectangle,
           rounded corners,
           draw=black, very thick,
           text width=6.5em,
           minimum height=2em,
           text centered},
    pil/.style={
           ->,
           thick,
           shorten <=2pt,
           shorten >=2pt,}
}
\tikzstyle{block} = [rectangle, rounded corners, minimum width= 3cm, minimum height=1cm, text centered, draw=black, fill=blue!20,]
\tikzstyle{invisbleblock} = [minimum width= 3em, minimum height=1cm, text centered, ]
\tikzstyle{decision} = [diamond, minimum width=3cm, minimum height=1cm, text centered, draw=black, fill=orange!20]
\tikzstyle{arrow} = [thick,->,>=stealth]
\tikzstyle{line} = [thick,-]
\title{Fourier reconstruction for diffraction tomography \\ of an object rotated into arbitrary orientations}
\author{Clemens Kirisits$^1$\\
{\footnotesize\href{mailto:clemens.kirisits@univie.ac.at}{clemens.kirisits@univie.ac.at}} 
\and Michael Quellmalz$^2$\\
{\footnotesize\href{mailto:quellmalz@math.tu-berlin.de}{quellmalz@math.tu-berlin.de}}
\and Monika Ritsch-Marte$^3$\\ 
{\footnotesize\href{mailto:monika.ritsch-marte@i-med.ac.at}{monika.ritsch-marte@i-med.ac.at}}
\and Otmar Scherzer$^{1,4}$\\
{\footnotesize\href{mailto:otmar.scherzer@univie.ac.at}{otmar.scherzer@univie.ac.at}}
\and Eric Setterqvist$^4$\\
{\footnotesize\href{mailto:eric.setterqvist@ricam.oeaw.ac.at}{eric.setterqvist@ricam.oeaw.ac.at}}
\and Gabriele Steidl$^2$\\
{\footnotesize\href{mailto:steidl@math.tu-berlin.de}{steidl@math.tu-berlin.de}}    }
\date{\today}
\titleformat{\section}[block]{\large\sc\filcenter}{\thesection.}{0.5ex}{}[]
\titleformat{\subsection}[runin]{\bf}{\thesubsection.}{0.5ex}{}[.]
\newtheorem{lemma}{Lemma}[section]
\newaliascnt{proposition}{lemma}
\newtheorem{proposition}[proposition]{Proposition}
\newaliascnt{corollary}{lemma}
\newtheorem{corollary}[corollary]{Corollary}
\newaliascnt{theorem}{lemma}
\newtheorem{theorem}[theorem]{Theorem}
\newaliascnt{definition}{lemma}
\newtheorem{definition}[definition]{Definition}
\newaliascnt{assumption}{lemma}
\newaliascnt{notation}{lemma}
\newaliascnt{example}{lemma}
\newtheorem{example}[example]{Example}
\newaliascnt{experiment}{lemma}
\newaliascnt{remark}{lemma}
\newtheorem{remark}[remark]{Remark}
\theoremstyle{nonumberplain}
\newtheorem{proof}{Proof}
\newcommand{\N}{\mathds{N}}
\newcommand{\R}{\mathds{R}}
\newcommand{\C}{\mathds{C}}
\newcommand{\abs}[1]{\left|#1\right|}
\newcommand{\norm}[1]{\left\|#1\right\|}
\newcommand{\e}{\mathrm e}
\let\ii\i
\renewcommand{\i}{\mathrm i}
\newcommand{\sphere}{\mathbb{S}^2}
\newcommand{\bn}{\mathbf{n}}
\newcommand{\br}{{\bf r}}
\newcommand{\bs}{{\bf s}}
\newcommand{\by}{{\bf y}}
\newcommand{\bk}{{\bf k}}
\newcommand{\bv}{{\bf v}}
\newcommand{\be}{{\bf e}}
\newcommand{\bz}{{\bf z}}
\newcommand{\bh}{{\bf h}}
\newcommand{\ui}{u^{\mathrm{inc}}}
\newcommand{\uborn}{u^{\mathrm{Born}}}
\newcommand{\utot}{u^{\mathrm{tot}}}
\newcommand{\phitot}{\varphi^{\mathrm{tot}}}
\newcommand{\us}{u^{\mathrm{sca}}}
\newcommand{\phir}{\varphi^{\mathrm{Rytov}}}
\newcommand{\phii}{\varphi^{\mathrm{inc}}}
\newcommand{\phis}{\varphi^{\mathrm{sca}}}
\newcommand{\tT}{\top}
\newcommand{\rs}{r_{\mathrm{s}}}
\newcommand{\supp}{\operatorname{supp}}
\newcommand{\ktran}{\mathcal{F}}
\begin{document}

\maketitle
\thispagestyle{empty}
\begin{center}
	\parbox[t]{18em}{\footnotesize
		\hspace*{-1ex}$^1$Faculty of Mathematics\\
		University of Vienna\\
		Oskar-Morgenstern-Platz 1\\
		A-1090 Vienna, Austria}
	\hfil
	\parbox[t]{18em}{\footnotesize
		\hspace*{-1ex}$^2$Institute of Mathematics\\
		Technical University Berlin\\
		Straße des 17. Juni 136 \\
		D-10623 Berlin, Germany}
	\\[1ex]
	\parbox[t]{18em}{\footnotesize
		\hspace*{-1ex}$^3$Institute of Biomedical Physics \\
		Medical University of Innsbruck \\
		Müllerstraße 44\\
		A-6020 Innsbruck, Austria}
	\hfil
	\parbox[t]{18em}{\footnotesize
		\hspace*{-1ex}$^4$Johann Radon Institute for Computational\\
		\hspace*{1em}and Applied Mathematics (RICAM)\\
		Altenbergerstraße 69\\
		A-4040 Linz, Austria} 

\end{center}
\begin{abstract} 
In this paper, we study the mathematical imaging problem of optical 
diffraction tomography (ODT) for the scenario of a microscopic rigid particle rotating in a trap created, 
for instance, by acoustic or optical forces. 
Under the influence of the inhomogeneous forces the  particle carries out a time-dependent smooth, but  complicated motion described by a set of affine transformations. The rotation of the particle enables one to record optical images from a wide range of angles, which largely eliminates the ``missing cone problem'' in optics.
This advantage, however, comes at the price that the rotation axis in this scenario is not fixed, but continuously undergoes some variations, and that the rotation angles are not equally spaced, which is in contrast to standard tomographic reconstruction assumptions.
In the present work, we assume that the time-dependent motion parameters are known, 
and that the particle's  scattering potential is compatible with making the first order Born or Rytov approximation.
We prove a Fourier diffraction theorem 
and derive novel backprojection formulae for the reconstruction of the scattering potential, 
which depends on the refractive index distribution inside the object, 
taking its complicated motion into account.
This provides the basis for solving the ODT problem with an efficient non-uniform discrete Fourier transform.
\end{abstract}

\section{Introduction} \label{sec:intro}
In \emph{optical  diffraction tomography} (ODT), see, for instance  \cite{ArrHeb97,Arr99,ArrScho09,sung2009optical,jin2017tomographic},
the three-dimensional (3D) refractive index distribution of an object is constructed from optical measurements, i.e.~from intensity images
or from interferometric data, taken from different angles. The illumination directions are varied, for instance by active scanning or by means of a lenslet array as in Fourier ptychography~\cite{tian2014multiplexed}. Alternatively, the object is embedded in a gel and rotated while the illumination direction is kept fixed. In either case, the light propagation directions utilized to  solve the inverse problem from a set of  images  recorded from these directions are known. Moreover, typically the viewing directions are regularly distributed around a prevalent direction, which means that the viewing angles are restricted to a certain hardware-related  interval. Depending on the width of this interval, the sampling in the Fourier plane remains incomplete, which leads to artefacts in the reconstruction, such as the well-know ``missing cone artefact'' in optics~\cite{krauze2020optical,LimLeeJinShiLee15,SunDas11}, which are exactly the geometrical figures plotted in \autoref{fig:coverage}. 

The present paper is motivated by the context of carrying out ODT on a trapped particle which is held in place by optical or acoustic forces~\cite{dholakia2020comparing}. We assume that these forces can be employed to induce rotations of the trapped particle in a controlled way,
e.g.~by using holographic optical tweezers with several spots or by standing ultrasound waves. 
This generally allows to view the particle from a wider range of directions 
than possible with illumination scans on a fixed object, and thus leads to more complete sampling in Fourier space 
and consequently to fewer artifacts. 
However, this comes at the price that the viewing directions are then not as regularly spaced as normally the case. 
Even more importantly, in optical or acoustic trapping the particle itself is not completely immobilized and the locally acting forces are typically inhomogeneous, so that the particle undergoes a time-dependent  smooth, but complicated motion described by a set of affine transformations. 
In a real experimental situation, a video would be recorded and the parameters of the affine transformations, i.e.~translation vector  and rotation axis and angle at a given time,  would have to be extracted at every sampling time by some suitable method.
In this work we assume these parameters to be \emph{known}. 
Apart from this, we assume that the particle can be described as a rigid body and that its center of mass  is fixed, which means we do not have any translations but only rotation around the center of mass.

The concrete model, which we study in this paper, is based on a series of assumptions:
\begin{enumerate}
	\item[(i)] 
	In a lens-less imaging configuration, the object is probed with \emph{coherent} light, assumed as plane waves.  
	The optical \emph{field} (not just the intensity)  is measured, either in reflection or in transmission. This means that the full optical amplitude and phase information has to be captured, for example interferometrically. Alternatively, we can consider data recorded by intensity measurements, where the phase has been determined by a phase-retrieval algorithm under suitable constraints~\cite{fienup1987reconstruction}.
	\item[(ii)] 
	The scattering properties of the probe only \emph{slightly deviate} 
	from the background, meaning that linearized models assuming Born's or Rytov's approximation are valid. 
	If these simplifications cannot be made, methods from \emph{full waveform inversion} have to be considered, see, for instance \cite{VirAsnBroMetRib14}. This is not pursued here.
	\item[(iii)] 
	Certain assumptions for the propagation of the light through the object have to be made, i.e.~on the \emph{forward model}: 
	If the properties of  object and  optical set-up allow the imaging to be approximately given by geometric optics, then, mathematically, the problem becomes analogous to inverting the 3D X-ray transform, see \cite{ChoFanOhBadDas07}. 
	In this case, the optical image resembles a projection image, and optical projection tomography can be used~\cite{sharpe2002optical}. This is, for instance, fulfilled in low numerical aperture imaging of biological samples with sufficient amplitude contrast, with large structures on the scale of the optical wavelength and with limited refractive index contrast.
	For imaging with a  higher numerical aperture objective, and/or for samples with small structures diffracting the light beams, deviations from geometrical optics are to be expected. In this situation diffraction models, 
see, for instance the work of Devaney \cite{Dev82,Dev83,Dev84} and \cite{MueSchuGuc15_report}, 
more accurately describing the propagation of the light through the sample, 
need to be considered. 
These models will be investigated in this paper. Please note that all of these approaches still assume that first order Born or Rytov are valid, which means that the object cannot be strongly refracting or scattering.
	\item[(iv)] 
	We assume that the motion of the particle has been estimated beforehand. We mention our work~\cite{ElbRitSchSchm20} for retrieving the motion parameters, which is applicable if the imaging of the sample is well approximated by \emph{projections}, as described above. 
	For motion estimation also the algorithms from single particle cryogenic electron microscropy (cryo-EM), see \cite{Hee87,Gon88a,CoiShkSigSin10,SinShk11,WanSinWen13}, can be used, which also serves as a prerequisite step for 3D-visualization also based on inverting the X-ray transform.  
\end{enumerate}

In this paper, we present an algorithm for 3D visualization of a single rigid object rotating around its center of mass. 
The mathematical model describing the optical experiment is a diffraction tomography model based on Born's or Rytov's simplifications. One challenge is the alignment of the exciting plane waves with the measurement plane. This particular setup therefore requires new mathematical research: 
\begin{enumerate}
	\item[(i)] In contrast to the references on diffraction tomography mentioned above, in the envisaged tomography on levitated or trapped objects the measurement data is not uniformly sampled. To deal with such measurements, 
	we use reconstruction formulae in the k-space, also known as Fourier-space, see \autoref{sec:backprop}.
	\item[(ii)] Born and Rytov approximations are also used in various kind of other applications 
    such as seismics and ultrasound tomography, see \cite{Dev81,Dev82,Dev83,Dev84,NatWue01,Nat15}. 
    The frequency coverage on the reconstruction of the 3D Fourier transform of the scattering potential of the probe which is given by the measurement data, similarly as in  \autoref{fig:coverage} below, 
		have been observed there first  \cite{WuTok87,Mor89}. 
	\item[(iii)] For the numerical reconstruction, we propose the use of the inverse \emph{Nonequispaced Discrete Fourier Transform} (NDFT), which can deal with the irregular motion considered here. For other regularization methods (such as variational ones) we refer to \cite{LimLeeJinShiLee15,SunDas11}.
	\item[(iv)] From a mathematical perspective it is important to consider the describing diffraction equations and backpropagation formulas in a rigorous distributional setting. Mathematically this sheds some new light on the reconstruction formulae.
\end{enumerate}

The outline of this paper is as follows: 
In \autoref{sec:exp}, we introduce the mathematical setting of diffraction tomography, and formulate the basic model based on Born's or Rytov's approximation for wave propagation. 
In \autoref{sec:back}, we state a rigorous proof of the Fourier diffraction theorem, which builds the foundation of our reconstruction formulae.
Then, in \autoref{sec:backprop}, we derive a backpropagation reconstruction formula in k-space, which can take into account arbitrary (uncontrolled) rotations of the sample. 
\autoref{sec:disc} discusses the discretization of the backpropagation formula from the preceding section as well as an alternative reconstruction method based on the inverse NDFT. 
\autoref{sec:num_results} shows numerical reconstructions, comparing the backpropagation formula with the inverse NDFT.
The appendix \autoref{appendix} provides background information on distributions and Fourier analysis
and some of the rather technical proofs.

\section{Conceptual Experiment} \label{sec:exp}
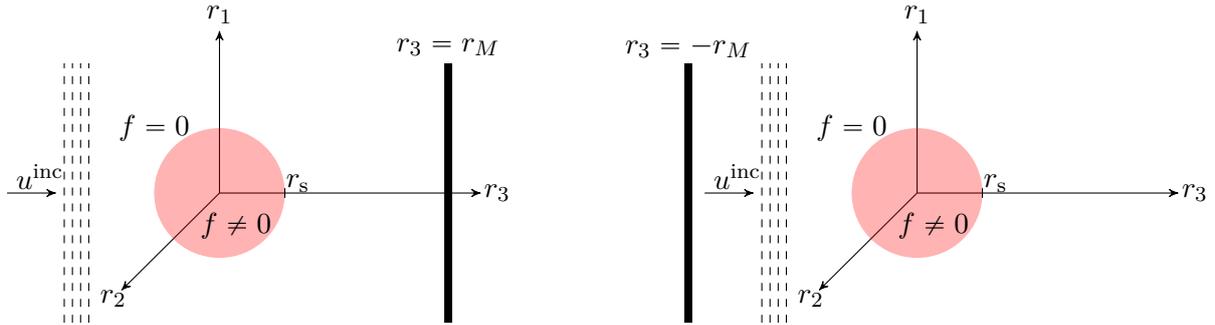
\begin{figure}
	\begin{center}
		\begin{tikzpicture}[scale=0.43,
		>=stealth',
		pos=.8,
		photon/.style={decorate,decoration={snake,post length=1mm}}
		]
		
		\draw[->] (2,0)--(10,0)  ;
		\draw[->] (2,0)--(2,5)  ;
		\draw[->] (2,0)--(-1,-3)  ;
		\node at (10.5,0) {$r_3$};
		\node at (2,5.5) {$r_1$};
		\node at (-1.25,-3.25) {$r_2$};
		
		\fill[red,opacity=0.3] (2,0) circle (2);
		\node at (0,2) {$f = 0$};
		\node at (2.5,-1) {$f \neq 0$};
		\draw (4,-0.15) -- (4,0.15);
		\node at (4.4,.3) {$\rs$};
		
		\draw[dashed] (-2,-4) -- (-2,4);
		\draw[dashed]  (-2.25,-4) -- (-2.25,4);
		\draw[dashed]  (-2.5,-4) -- (-2.5,4);
		\draw[dashed] (-2.75,-4) -- (-2.75,4);
		\draw[->] (-4.5,0) -- (-3,0);
		\node at (-3.5,.5) {$\ui$};	
		
		
		\draw[line width = 3pt] (9,-4) -- (9,4);
		\node at (9,4.5) {$r_3 = r_M$};
		
		\end{tikzpicture}
		\hspace{1cm}
		\begin{tikzpicture}[scale=0.43,
		>=stealth',
		pos=.8,
		photon/.style={decorate,decoration={snake,post length=1mm}}
		]
		
		\draw[->] (2,0)--(10,0)  ;
		\draw[->] (2,0)--(2,5)  ;
		\draw[->] (2,0)--(-1,-3)  ;
		\node at (10.5,0) {$r_3$};
		\node at (2,5.5) {$r_1$};
		\node at (-1.25,-3.25) {$r_2$};
		
		\fill[red,opacity=0.3] (2,0) circle (2);
		\node at (0,2) {$f = 0$};
		\node at (2.5,-1) {$f \neq 0$};
		\draw (4,-0.15) -- (4,0.15);
		\node at (4.4,.3) {$\rs$};
		
		\draw[dashed] (-2,-4) -- (-2,4);
		\draw[dashed]  (-2.25,-4) -- (-2.25,4);
		\draw[dashed]  (-2.5,-4) -- (-2.5,4);
		\draw[dashed] (-2.75,-4) -- (-2.75,4);
		\draw[->] (-4.5,0) -- (-3,0);
		\node at (-3.5,.5) {$\ui$};	
		
		
		\draw[line width = 3pt] (-5,-4) -- (-5,4);
		\node at (-5,4.5) {$r_3 = -r_M$};
		
		\end{tikzpicture}
	\end{center}
\caption{Conceptual setup. The support of $f$ lies entirely in $\mathcal{B}_{\rs}$.
\emph{Left:} Transmission imaging. \emph{Right:} Reflection imaging. }
	\label{fig:trans}
\end{figure}

In this section, we describe the  experimental situation we have in mind when developing our tomographic reconstruction method for arbitrary object rotations  and formulate the mathematical wave propagation models.
A schematic overview of the set-up is given in \autoref{fig:trans}.

The object we want to image tomographically is illuminated by a plane wave $\ui$ 
which propagates in  direction $\be_3 = (0,0,1)^\tT$ with wave number $k_0$, that is
\begin{equation} \label{eq:plane_wave}
\ui(\br) = \e^{\i k_0 r_3}.
\end{equation}
Note that the wavelength $\lambda$ of $\ui$ is related to the wave number via $k_0 = \frac{2\pi}{\lambda}$. 
The object is assumed to be enclosed by the open ball $\mathcal{B}_{\rs} \subset \R^3$ 
centered at $\mathbf{0}$ with radius $\rs$. 
In order to generate multiple illuminations of the object, it is rotated around its center of mass 
which is fixed at $\mathbf{0}$. 
The incident wave $\ui$ induces a scattered wave $\us$ which is recorded in a plane at a distance from the object 
at  $r_3 = r_M > \rs$  for transmission imaging and at $r_3 = - r_M < -\rs$ for reflection imaging, respectively.

By $n(\br)$ we denote the refractive index of the object and by $n_0$ the constant refractive index of the background, respectively. 
The scattering properties of the object are characterized by the function 
 \begin{equation} \label{eq:refractive}
   k(\br) = k_0 \frac{n(\br)}{n_0}\,, \qquad \text{ for all } \br \in \R^3.
 \end{equation}
Note that $k$ differs from $k_0$ only in $\mathcal{B}_{\rs}$.
The function
\begin{equation}
f(\br) = k^2(\br)-k_0^2
\end{equation}
is referred to as the 
\emph{scattering potential}~\cite{Wol69} and will be the quantity which we set out to reconstruct from the measurements of the scattered waves. By construction we have
 \begin{equation} \label{eq:supp1}
  \supp (f)\subseteq\mathcal{B}_{\rs}\subset (-\rs,\rs)^3.
 \end{equation}
The total field $\utot=\ui+\us$ satisfies the \emph{reduced wave equation}
 \begin{equation} \label{eq:tot}
 (\Delta  + k^2) \utot = 0,
 \end{equation}
while $\ui$ in turn fulfills the \emph{Helmholtz equation}
\begin{equation} \label{eq:tot_const}
(\Delta + k_0^2) \ui = 0.
\end{equation}

Next we will consider two simplifications of \autoref{eq:tot}, the Born and Rytov approximations. While their underlying assumptions and physical validity are different, they may be analyzed within the same mathematical framework. More on the background and comparisons of the approximations can be found in \cite[Chap.~6]{KakSla01}.
%
\subsection{Born approximation} \label{ss:Born}
Inserting \autoref{eq:tot_const} in \autoref{eq:tot} together with some rearranging gives
\begin{equation} \label{eq:totII}
-(\Delta + k_0^2)\us = f(\us+\ui). 
\end{equation}
Neglecting $\us$ in the right-hand side of \autoref{eq:totII} (assuming that $\us$ is small in comparison with $\ui$), we obtain the \emph{Born approximation}:
 \begin{equation} \label{eq:Born}
  -(\Delta + k_0^2) \uborn = f\ui.
 \end{equation}
%
\subsection{Rytov approximation}
We make the ansatz
\begin{equation} \label{eq:exp}
  \utot= \ui + \us= \e^{\phitot}, \quad \ui =\e^{\i k_0 r_3}= \e^{\phii}
  \text{ and } \phitot= \phii + \phis.
\end{equation}
The calculations below should be considered formal. From \autoref{eq:exp} it
follows that
\begin{equation} \label{eq:expsca}
\begin{aligned}
\us &= \utot - \ui = \e^{\phitot} - \e^{\phii} = \e^{\phii} \left( \e^{\phis} - 1\right).
\end{aligned}
\end{equation}
We have
\begin{equation} \label{helper1}
\nabla  \utot = \utot \nabla \phitot 
\quad \mathrm{and} \quad  
\Delta \utot =  \utot\left(\Delta \phitot + (\nabla \phitot)^2 \right),
\end{equation}
where $\left(\nabla\phitot\right)^2=\sum_{i=1}^{3}\left(\frac{\partial\phitot}{\partial r_i}\right)^2$
and similarly for $\ui$.
Then the Helmholtz \autoref{eq:tot_const} and
the reduced wave \autoref{eq:tot} 
can be rewritten as
\begin{align}
-k_0^2 &= \Delta \phii + (\nabla \phii)^2 , \nonumber\\
-k^2 &= \Delta \phitot + (\nabla \phitot)^2  
= \Delta \phii  + \Delta \phis + \left( \nabla \phii + \nabla \phis\right)^2\nonumber \\
&= \Delta \phii + (\nabla \phii)^2 + \Delta \phis + 2  \nabla \phii \cdot \nabla \phis +\left(\nabla\phis\right)^2,\nonumber\\
-f &=  \Delta \phis + 2  \nabla \phii \cdot \nabla \phis + (\nabla \phis)^2. \label{helper2}
\end{align}
For establishing the Rytov approximation, we consider
$$
(\Delta + k_0^2)(\ui \phis) = \phis \Delta \ui + 2 \ui \, \nabla \phii \cdot \nabla \phis + \ui \Delta \phis + k_0 \ui \phis,
$$
which by \autoref{eq:tot_const} becomes
$$
(\Delta + k_0^2)(\ui \phis) =  \left( 2 \nabla \phii \cdot \nabla \phis + \Delta \phis \right)  \ui.
$$
and by \autoref{helper2} finally
\begin{equation} \label{eq:totphase}
-(\Delta +k_0^2)(\ui \phis)=\left(f + \left(\nabla\phis\right)^2\right)\ui.
\end{equation}
Neglecting $\left(\nabla\phis\right)^2$ in \autoref{eq:totphase}, we obtain the \emph{Rytov approximation} $\phir$:
\begin{equation} \label{eq:Rytov}
  -(\Delta +k_0^2)(\ui \phir)=f\ui.
\end{equation}
Comparing the Born and Rytov approximations, we find their relation
\begin{equation*} 
	\uborn = \ui\phir.
\end{equation*}
Therefore, it is sufficient to concentrate on the Born approximation. For simplicity of notation, 
we set $\uborn=u$ from now on.

\section{Fourier diffraction theorem} \label{sec:back}
The Fourier diffraction theorem, see \cite[Sect.\ 6.3]{KakSla01}, \cite[Thm.\ 3.1]{NatWue01} or \cite{Wol69}, is the basis for reconstruction formulae in diffraction tomography, as it relates the 2D Fourier transform of the measurements to the 3D Fourier transform of the scattering potential. In this section, we establish a version of the Fourier diffraction theorem that suits the experimental setup described in \autoref{sec:exp}.

Under Born's approximation, the relation between the scattered wave $u$ and the scattering potential $f$ is governed by \autoref{eq:Born}. We assume further that $u$ satisfies the \emph{Sommerfeld radiation condition}
\begin{equation*}
\lim_{s\to\infty} \max_{\norm\br=s} \norm{\br} \left(\frac{\partial }{\partial \br} u(\br)-\i k_0 u(\br)\right)=0,
\end{equation*}
where $\frac{\partial}{\partial \br}$ denotes the directional derivative. Physically speaking, $u$ is an outgoing wave.

To formulate the diffraction theorem, we need the following notation.
We set
\begin{equation} \label{eq:kappa}
\kappa=\kappa(k_1,k_2) \coloneqq
\begin{cases}
\sqrt{k_0^2-k_1^2-k_2^2}, & k_1^2+k_2^2\leq k_0^2,\\
\i\sqrt{k_1^2+k_2^2-k_0^2}, & k_1^2+k_2^2>k_0^2.
\end{cases}
\end{equation}
Furthermore, $H_{r_3}: \R^3 \rightarrow \R$ denotes the \emph{Heaviside function} 
in the third coordinate centered at $r_3$, that is, 
$$H_{r_3}(s_1,s_2,s_3) 
\coloneqq
\left\{
\begin{array}{ll}
0& \mathrm{if} \; s_3 < r_3,\\      
1& \mathrm{otherwise}. 
\end{array}
\right.
$$
Finally,  we denote by $\ktran_{1,2}$ the \emph{partial Fourier transform} which is taken with respect to the first two components,
by $\mathcal{D}^\prime(\R^3)$ the space of distributions and by $S^\prime(\R^3)$,
the space of tempered distributions. 
Further background material on Fourier transforms, convolutions and distributions is given in \autoref{ss:ktran} and \autoref{ss:Fpartial} in the appendix.
Then we can establish the following theorem whose proof is given in \autoref{sec:fdt}.

\begin{theorem} \label{thm:fourierdiffraction}
	Let $k_0>0$ and $g\in L^p(\R^3)$, $p>1$, with $\supp(g)\subset\mathcal{B}_{r}$ for some $r>0$. Suppose that the function $u$ is the solution of
	\begin{equation} \label{eq:Born2}
	\Delta u + k_0^2 u = -g
	\end{equation}
which satisfies the Sommerfeld radiation condition. 
Then, we can identify the distribution $\mathcal F_{1,2}u$ almost everywhere with the following locally integrable function
	\begin{equation} \label{eq:F12u}
	\ktran_{1,2}u(k_1,k_2,r_3)=\sqrt{\frac{\pi}{2}}\frac{\i}{\kappa} \left(\e^{\i \kappa r_3} \ktran \left((1-H_{r_3})g \right)(k_1,k_2,\kappa) 
	+ \e^{-\i \kappa r_3}\ktran \left(H_{r_3}g\right)(k_1,k_2,-\kappa) \right).
	\end{equation}
\end{theorem}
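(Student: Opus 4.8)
The plan is to solve the Helmholtz equation \eqref{eq:Born2} by the outgoing Green's function and then compute the partial Fourier transform $\ktran_{1,2}$ explicitly. First I would write $u = \Phi * g$, where $\Phi(\br) = \frac{\e^{\i k_0 \norm{\br}}}{4\pi\norm{\br}}$ is the free-space outgoing fundamental solution, i.e. $(\Delta + k_0^2)\Phi = -\delta$; since $g \in L^p$ with $p>1$ and compact support, this convolution is well-defined, yields a function, and is the unique solution satisfying the Sommerfeld radiation condition (this is classical and may be invoked). The next ingredient is the Weyl / Sommerfeld representation of $\Phi$: its partial Fourier transform in the first two variables is
\begin{equation*}
\ktran_{1,2}\Phi(k_1,k_2,r_3) = \frac{\i}{2}\,\frac{1}{2\pi}\,\frac{\e^{\i\kappa\abs{r_3}}}{\kappa},
\end{equation*}
with $\kappa$ as in \eqref{eq:kappa}; here the branch choice in \eqref{eq:kappa} for $k_1^2+k_2^2 > k_0^2$ is exactly what enforces decay as $\abs{r_3}\to\infty$, matching the radiation condition. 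I would establish this identity either by a contour-integration/residue computation of the inverse $k_3$-integral of $\widehat\Phi(\bk) = (k_0^2 - \abs{\bk}^2)^{-1}$ (with the poles displaced according to the limiting absorption principle), or by citing the standard Weyl formula; the appendix material on partial Fourier transforms (\autoref{ss:Fpartial}) should cover what is needed.

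The heart of the argument is then to take $\ktran_{1,2}$ of the convolution $u = \Phi * g$. Since the convolution is in all three variables, $\ktran_{1,2}$ turns the transverse part into a product and leaves an ordinary convolution in the third variable:
\begin{equation*}
\ktran_{1,2}u(k_1,k_2,r_3) = 2\pi \int_{\R} \ktran_{1,2}\Phi(k_1,k_2,r_3 - t)\,\ktran_{1,2}g(k_1,k_2,t)\dd t
= \frac{\i}{2\kappa}\int_{\R} \e^{\i\kappa\abs{r_3 - t}}\,\ktran_{1,2}g(k_1,k_2,t)\dd t.
\end{equation*}
Now I would split the $t$-integral at $t = r_3$: on $\{t < r_3\}$ we have $\abs{r_3 - t} = r_3 - t$, giving $\e^{\i\kappa r_3}\int_{t<r_3}\e^{-\i\kappa t}\ktran_{1,2}g\dd t$, and on $\{t \geq r_3\}$ we have $\abs{r_3-t} = t - r_3$, giving $\e^{-\i\kappa r_3}\int_{t\geq r_3}\e^{\i\kappa t}\ktran_{1,2}g\dd t$. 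Recognizing that $\int_{t<r_3}\e^{-\i\kappa t}\ktran_{1,2}g(k_1,k_2,t)\dd t = \sqrt{2\pi}\,\ktran\big((1-H_{r_3})g\big)(k_1,k_2,\kappa)$ and similarly $\int_{t\geq r_3}\e^{\i\kappa t}\ktran_{1,2}g\dd t = \sqrt{2\pi}\,\ktran\big(H_{r_3}g\big)(k_1,k_2,-\kappa)$ — tracking the chosen Fourier normalization constants carefully — yields precisely \eqref{eq:F12u}.

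The main obstacle is making the formal manipulations rigorous in the evanescent regime $k_1^2 + k_2^2 > k_0^2$, where $\kappa$ is purely imaginary and $\e^{\i\kappa\abs{r_3-t}}$ decays rather than oscillates: one must justify that $\ktran_{1,2}u$ — a priori only a tempered distribution — really is the locally integrable function on the right-hand side, that the Fubini/Fourier-convolution exchange is legitimate, and that the compact support of $g$ makes all the $t$-integrals absolutely convergent (which it does, since for fixed $(k_1,k_2)$ the integrand is supported in $\abs{t} \leq r$). A secondary technical point is the regularity/integrability needed to write $u = \Phi * g$ as an honest function and to evaluate $\ktran g$ at the complex-conjugate-type argument $\pm\kappa$; here $g \in L^p(\R^3)$, $p>1$, with compact support makes $\widehat g$ real-analytic (Paley–Wiener), so the pointwise evaluations at $k_3 = \pm\kappa$ are meaningful even when $\kappa$ is imaginary. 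I expect these points to be handled by the density/approximation and distributional arguments deferred to \autoref{sec:fdt} and the appendix.
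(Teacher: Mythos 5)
Your route is the same one the paper follows: represent $u=G*g$ with the outgoing Green's function $G(\br)=\e^{\i k_0\norm{\br}}/(4\pi\norm{\br})$, compute the Weyl-type partial transform $\ktran_{1,2}G=\tfrac{\i}{4\pi\kappa}\e^{\i\kappa\abs{r_3}}$ (the paper obtains this by limiting absorption in \autoref{thm:partfourierG}), turn $\ktran_{1,2}u$ into a one-dimensional convolution in $r_3$, split the integral at $t=r_3$, and recognize the two halves as $\sqrt{2\pi}$ times the Fourier transforms of $(1-H_{r_3})g$ and $H_{r_3}g$ evaluated at $k_3=\pm\kappa$. The constants all check out and match \autoref{eq:F12u}.

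As a proof, however, the proposal leaves the genuinely hard part unexecuted, and it partly misplaces where the difficulty sits. The delicate point is not mainly the evanescent regime $k_1^2+k_2^2>k_0^2$ (there the exponential decays and everything improves), but the circle $k_1^2+k_2^2=k_0^2$, where $1/\kappa$ blows up: $\ktran_{1,2}G$ exists only as a tempered distribution, and the paper's treatment --- keep $\kappa_\epsilon$, test against Schwartz functions, use local integrability of $1/\abs{\kappa}$ together with $\abs{\kappa}\le\abs{\kappa_\epsilon}$, and apply dominated convergence --- is precisely the content you would have to supply rather than quote. Second, the exchange $\ktran_{1,2}(G*g)=2\pi\,\ktran_{1,2}g\stackrel{3}{*}\ktran_{1,2}G$ is only available (via the appendix machinery) when one factor is a Schwartz function, so one must first prove the identity for $g_n\in\mathcal{D}(\R^3)$ with $g_n\to g$ and then pass to the limit on both sides. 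Your Paley--Wiener remark takes care of the right-hand side, but the left-hand side requires $u_n\to u$ in $S'(\R^3)$, which does not follow from $g_n\to g$ in $L^p$ alone: the paper invokes the $L^{q_1}\to L^{q_2}$ a priori estimate for the radiating solution due to Guti\'errez to get this convergence. That stability estimate is a concrete missing ingredient that a generic appeal to ``density/approximation'' does not provide; finally, the almost-everywhere identification of the distribution $\ktran_{1,2}u$ with the locally integrable right-hand side (which need not be tempered) is made via the du Bois--Reymond lemma, a step your sketch omits.
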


\begin{remark}
\begin{itemize}
\item[\textrm{(i)}] By the fundamental lemma of variational calculus of du Bois--Reymond, 
see \cite[Lem. 3.2]{Gru09}, \autoref{eq:F12u} is equivalent
with equality of both functions considered as distributions in $\mathcal{D^\prime}(\R^3)$.
Note that the identification of \autoref{eq:F12u} does not hold, in general, for $\ktran_{1,2}u$ 
as an element of  $S^\prime(\R^3)$ since the right-hand side is not guaranteed 
to be polynomially bounded as $\norm{(k_1,k_2,r_3)}\to\infty$, compare with \autoref{ex:L1loc}.
\item[(ii)]	Recall further that $\kappa$ is imaginary for $k_1^2+k_2^2>k_0^2$. In this case, in \autoref{eq:F12u}, we have to consider the analytic continuations of $\ktran\left((1-H_{r_3})g \right)$ and $\ktran \left(H_{r_3}g\right)$ to $\C^3$.
\end{itemize}
\end{remark}

The theorem implies the following Fourier diffraction result for our setting, see \autoref{fig:trans}.
\begin{corollary}\label{thm:fdt}
Assume that
\begin{enumerate}
	\item[\rm{(i)}] the scattering potential of the probe is given by $f \in L^p$, $p>1$, where $\supp(f)\subset\mathcal{B}_{\rs}$, $0 < \rs <r_M$,
	\item[\rm{(ii)}] the incident field is a plane wave $\ui(\br) = \e^{\i k_0 r_3}$,
	\item[\rm{(iii)}] the Born approximation is valid for the scattered field $u$ and $u$ satisfies the Sommerfeld radiation condition,
	\item[\rm{(iv)}] the scattered field $u$ is measured at the plane $r_3 = r_M$ (transmission imaging) or $r_3 = -r_M$ (reflection imaging).
\end{enumerate}
Then
	\begin{equation} \label{eq:recon}
		\ktran_{1,2}u(k_1,k_2,\pm r_M)=\sqrt{\frac{\pi}{2}}\frac{\i \e^{\i\kappa r_M}}{\kappa}\ktran f(k_1,k_2,\pm\kappa-k_0)
	\end{equation}
	for all $k_1,k_2 \in \R$ satisfying $k_1^2 + k_2^2 \neq k_0^2$.
\end{corollary}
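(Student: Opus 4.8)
The plan is to obtain \autoref{thm:fdt} as a direct specialization of \autoref{thm:fourierdiffraction}, choosing the source term to be $g = f\,\ui$ and then evaluating the representation \autoref{eq:F12u} on the measurement planes $r_3 = \pm r_M$. First I would verify the hypotheses of \autoref{thm:fourierdiffraction} for this $g$: since $\abs{\ui(\br)} = \abs{\e^{\i k_0 r_3}} = 1$, multiplication by $\ui$ leaves $L^p$-membership intact, so $g \in L^p(\R^3)$ with $p > 1$, and $\supp(g) = \supp(f) \subset \mathcal{B}_{\rs}$, i.e.\ we may take $r = \rs$ in the theorem. By assumptions \textit{(ii)}--\textit{(iii)} the Born-approximate scattered field $u$ solves $\Delta u + k_0^2 u = -g$ and fulfils the Sommerfeld radiation condition, so \autoref{eq:F12u} applies and identifies $\ktran_{1,2}u$, off a null set, with the locally integrable function on its right-hand side.

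Next I would use that the measurement planes are separated from $\supp(f)$. For transmission imaging we set $r_3 = r_M > \rs$; then every point of $\supp(g)$ has third coordinate strictly less than $r_M$, hence $H_{r_M} \equiv 0$ on $\supp(g)$, so $H_{r_M}g = 0$ and $(1-H_{r_M})g = g$. The second summand in \autoref{eq:F12u} therefore vanishes and
\[
\ktran_{1,2}u(k_1,k_2,r_M) = \sqrt{\tfrac{\pi}{2}}\,\frac{\i\,\e^{\i\kappa r_M}}{\kappa}\,\ktran g(k_1,k_2,\kappa).
\]
Symmetrically, for reflection imaging we set $r_3 = -r_M < -\rs$; now every point of $\supp(g)$ has third coordinate strictly greater than $-r_M$, so $H_{-r_M} \equiv 1$ on $\supp(g)$, the first summand vanishes, and using $\e^{-\i\kappa(-r_M)} = \e^{\i\kappa r_M}$ one gets the analogue with $\ktran g(k_1,k_2,-\kappa)$. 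The standing assumption $k_1^2 + k_2^2 \neq k_0^2$ ensures $\kappa \neq 0$, so the divisions are legitimate; for $k_1^2 + k_2^2 > k_0^2$ the symbol $\ktran g(k_1,k_2,\pm\kappa)$ is read as the Paley--Wiener analytic continuation to complex third argument, exactly as in the remark after \autoref{thm:fourierdiffraction}.

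It then remains to rewrite $\ktran g$ through $\ktran f$. Because $g(\br) = f(\br)\,\e^{\i k_0 r_3}$, the modulation property of the Fourier transform (with the convention fixed in \autoref{ss:ktran}) gives $\ktran g(k_1,k_2,k_3) = \ktran f(k_1,k_2,k_3 - k_0)$, and substituting $k_3 = \pm\kappa$ yields $\ktran g(k_1,k_2,\pm\kappa) = \ktran f(k_1,k_2,\pm\kappa - k_0)$, which is exactly \autoref{eq:recon}. I do not expect a genuine obstacle here: all the analytic content is already packaged in \autoref{thm:fourierdiffraction}. The one point I would make explicit is that \autoref{eq:F12u} only identifies $\ktran_{1,2}u$ with an $L^1_{\mathrm{loc}}(\R^3)$ function, so a priori its restriction to a single plane $\{r_3 = \pm r_M\}$ need not be pointwise meaningful; however, on $\{\abs{r_3} > \rs\}$ the cut-off terms $(1-H_{r_3})g$ and $H_{r_3}g$ no longer depend on $r_3$, so there the right-hand side of \autoref{eq:F12u} depends on $r_3$ only through the entire factor $\e^{\pm\i\kappa r_3}$ and is real-analytic in $r_3$; hence the trace at $r_3 = \pm r_M$ is well defined for almost every $(k_1,k_2)$, and \autoref{eq:recon} holds in that sense.
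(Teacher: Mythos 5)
Your proposal is correct and follows essentially the same route as the paper: apply \autoref{thm:fourierdiffraction} with $g = f\,\ui$, use the modulation property $\ktran(f\ui)(k_1,k_2,k_3)=\ktran f(k_1,k_2,k_3-k_0)$, and observe that on the plane $r_3=r_M$ (resp.\ $r_3=-r_M$) the term $H_{r_M}g$ (resp.\ $(1-H_{-r_M})g$) vanishes because the support of $f$ lies strictly between the planes. Your additional remark justifying the restriction of the a.e.-defined function $\ktran_{1,2}u$ to the measurement plane via analyticity in $r_3$ for $\abs{r_3}>\rs$ is a welcome extra precision not spelled out in the paper, but the argument is otherwise identical.
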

\begin{proof}
Assuming validity of the Born approximation, the scattered wave $u$ satisfies \autoref{eq:Born}. According to \autoref{thm:fourierdiffraction} with $g=f\ui$, we obtain
\begin{equation*}
	\begin{aligned}
		\ktran_{1,2}u(k_1,k_2,r_3) 
		&=	
		\frac{\i \sqrt{\pi}}{\kappa  \sqrt{2}} \left(\e^{\i \kappa r_3} \ktran \left((1-H_{r_3})f \ui \right)(k_1,k_2,\kappa) 
		+ \e^{-\i \kappa r_3}\ktran \left(H_{r_3}f \ui \right)(k_1,k_2,-\kappa) \right) \\
		&=
		\frac{\i \sqrt{\pi}}{\kappa  \sqrt{2}} \left(\e^{\i \kappa r_3} \ktran \left((1-H_{r_3})f \right)(k_1,k_2,\kappa-k_0) 
		+ \e^{-\i \kappa r_3}\ktran \left(H_{r_3}f \right)(k_1,k_2,-\kappa-k_0) \right),
	\end{aligned}
\end{equation*}
where we have exploited the specific form of the incident wave $\ui$. 
Finally, for transmission imaging, we have $r_3 = r_M$, so that $H_{r_M}f = 0$ and $(1-H_{r_M})f = f$. 
Similarly, for reflection imaging, where $r_3=-r_M$, we obtain $(1-H_{-r_M})f = 0$ and $H_{-r_M}f = f$.
\end{proof}

\begin{remark}\label{rem:hemisphere}
Even though, mathematically, \autoref{eq:recon} holds as long as $k_1^2 + k_2^2 \neq k_0^2$, physically speaking the spatial frequencies with $k_1^2 + k_2^2 > k_0^2$ do not contribute to the measurements. Therefore, without rotation of the object, the measurements in both transmission and reflection imaging provide access to the scattering potential $f$ on a hemisphere
\begin{equation*}
	\{(k_1,k_2,\pm\kappa-k_0)^\tT : k_1,k_2 \in \R,\, k_1^2 + k_2^2 < k_0^2 \}
\end{equation*}
in k-space. The two hemispheres are depicted in \autoref{fig:coverage}.
\end{remark}
\begin{figure}[h]
\includegraphics[width=.5\textwidth]{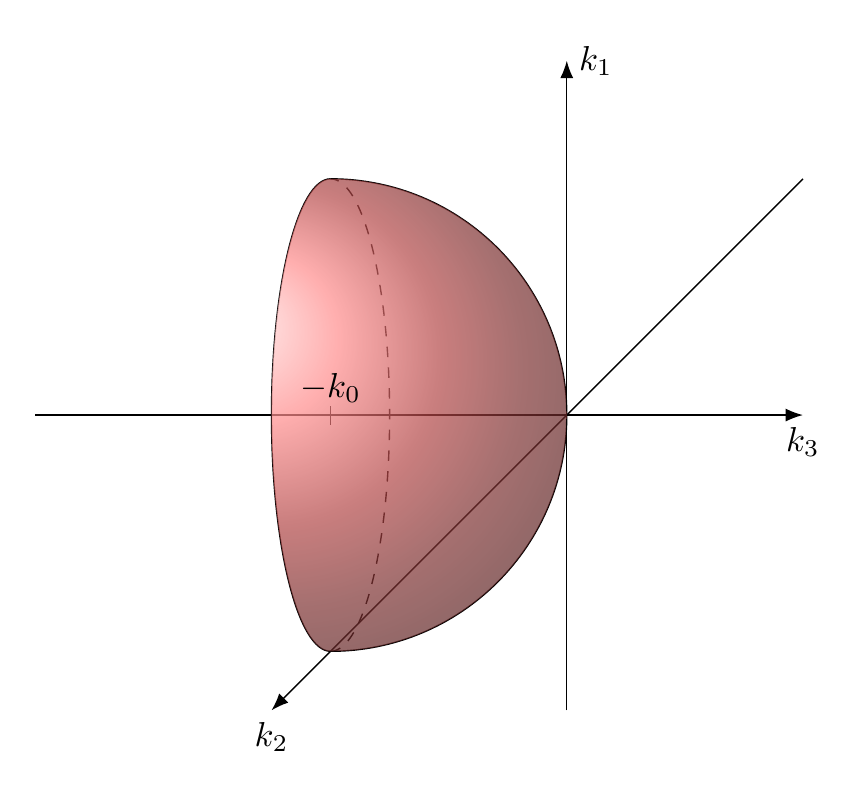}%
\includegraphics[width=.5\textwidth]{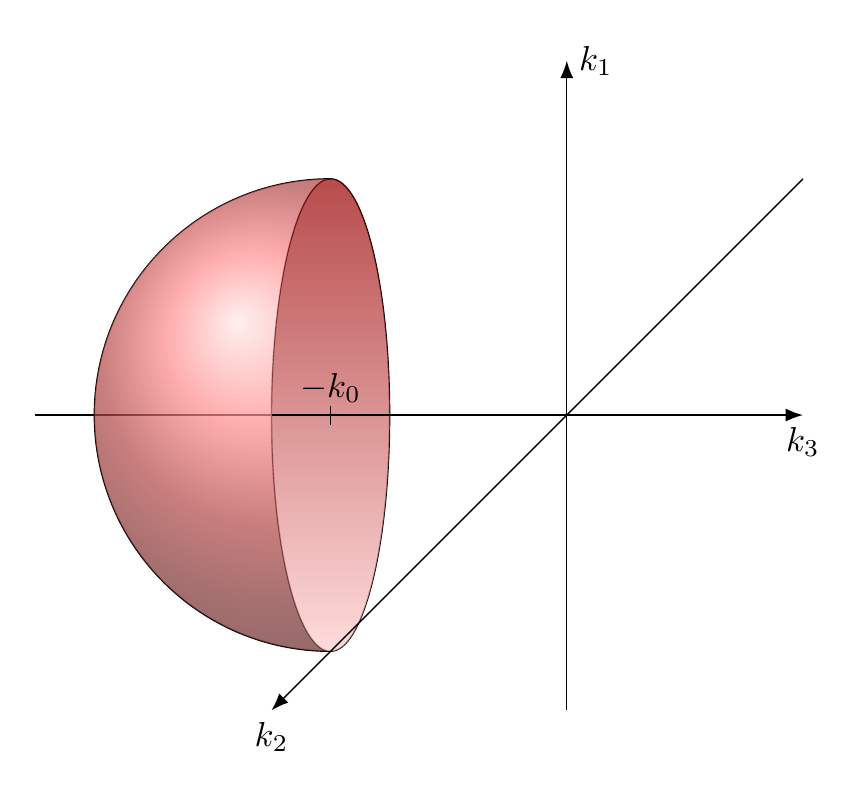}
\caption{Accessible points in k-space for transmission imaging (left) and reflection imaging (right).}
\label{fig:coverage}
\end{figure}

\section{Backpropagation formulae} \label{sec:backprop}
According to the Fourier diffraction theorem, illumination of the object from a single direction provides access to the scattering potential on a \emph{surface} in k-space. If the probe is rotated continuously, however, we can obtain knowledge of $\ktran f$ in a \emph{volume} $\mathcal{Y},$ see \autoref{fig:cover2} for an illustration of such a volume. Consequently, an approximation of the scattering potential that incorporates all the available information can be found by means of Fourier inversion
$$ f(\br) \approx (2\pi)^{-\frac32} \int_\mathcal{Y} \e^{\i \by \cdot \br} \ktran f(\by)\, d\by.$$
The reconstruction formula for our setting is made precise in \autoref{thm:reconstruction} and \autoref{thm:jacobian} below.

We assume that the scattering object undergoes a rotation with varying rotation axis. Therefore, if the function $\bn=\bn(t) : [0,L] \to \mathbb{S}^2$ describes the orientation of this axis and $\alpha = \alpha(t) : [0,L] \to \R$ is the corresponding angle, then the rotation can be represented by the matrix 
\begin{equation*}
R_{\bn,\alpha}^\tT \coloneqq 
\begin{pmatrix}
n_1^2(1-\mathrm{c}) + \mathrm{c} & n_1 n_2 (1-\mathrm{c}) - n_3 \mathrm{s} & 
n_1n_3(1-\mathrm{c}) + n_2 \mathrm{s}\\
n_1n_2(1-\mathrm{c}) + n_3 \mathrm{s} & n_2^2 (1-\mathrm{c}) + \mathrm{c} & 
n_2n_3(1-\mathrm{c}) - n_1 \mathrm{s}\\
n_1n_3(1-\mathrm{c}) - n_2\mathrm{s} & n_2 n_3 (1-\mathrm{c}) + n_1 \mathrm{s} & 
n_3^2(1-\mathrm{c}) + \mathrm{c}
\end{pmatrix},
\end{equation*}
where $\bn=(n_1,n_2,n_3)^\top$, $\mathrm{c} \coloneqq \cos \alpha$ and $\mathrm{s} \coloneqq \sin \alpha $. Note that we follow the convention of describing the rotation of the object by the transpose $R_{\bn,\alpha}^\tT$. Therefore, the scattering potential of the rotated object is given by $f \circ R_{\bn,\alpha}$.

Next, denote by $u_t$, $0\le t \le L$, the wave scattered by the rotated potential $f \circ R_{\bn(t),\alpha(t)}$. Under Born's approximation, it satisfies
\begin{equation*}
	\Delta u_t + k_0^2 u_t = - \ui f \circ R_{\bn(t),\alpha(t)}.
\end{equation*}
The full set of measurements in the transmission and reflection setup, respectively, is then given by
\begin{equation}\label{eq:measurements}
	u_t(r_1,r_2,\pm r_M), \quad r_1,r_2 \in \R, \quad 0 \le t \le L,
\end{equation}
and, according to \autoref{thm:fdt}, it is related to the scattering potential $f$ via
\begin{equation} \label{eq:recon_n}
	\ktran_{1,2}u_t(k_1,k_2,\pm r_M)= \sqrt{\frac{\pi}{2}}\frac{\i \e^{\i\kappa r_M}}{\kappa}
						\ktran f \left(R_{\bn(t),\alpha(t)}(k_1,k_2,\pm\kappa-k_0)^\tT \right).
\end{equation}
Next, let 
\begin{equation*}
	\mathcal{U} \coloneqq \{(k_1,k_2,t) \in \R^3 : k_1^2 + k_2^2 < k_0^2, \, 0\le t \le L \}
\end{equation*}
be the set where \autoref{eq:recon_n} is valid and can be used for reconstruction purposes, recall \autoref{rem:hemisphere}.
Moreover, the map that traces out the accessible domain in k-space is denoted by
\begin{equation}\label{eq:T}
	T _{\pm}: \mathcal{U} \to \mathbb{R}^3, \quad T_\pm(k_1,k_2,t) \coloneqq R_{\bn(t),\alpha(t)}(k_1,k_2, \pm \kappa - k_0)^\top.
\end{equation}
In the reconstruction formula below we have to take into account the number of times a point $\by$ in k-space is covered by $T_{\pm}$. This number, sometimes referred to as \emph{Banach indicatrix} of $T_\pm$\footnote{In Computerized Tomography (CT) the Banach indicatrix is called Crofton symbol (see for instance \cite{LouWebThe08}).}, will be denoted by $\operatorname{Card}(T_{\pm}^{-1}(\by))$, where $\operatorname{Card}(A)$ is the cardinality of a set $A$.
Finally, the approximation to $f$ we wish to reconstruct is
\begin{equation} \label{eq:fbp}
	f^{\pm}_{\mathrm{bp}}(\br) \coloneqq (2\pi)^{-\frac32} \int_{T_{\pm}(\mathcal{U})} \e^{\i \by \cdot \br} \ktran f(\by)\, d\by.
\end{equation}
The set $T_{\pm}(\mathcal{U})$ will be referred to as the \emph{frequency coverage} or \emph{k-space coverage} of the experimental setup.

\begin{theorem}\label{thm:reconstruction}
Let the assumptions of \autoref{thm:fdt} be satisfied. In addition, assume that $\alpha \in C^1[0,L]$ and $\bn\in C^1([0,L],\sphere)$. Then, for all $\br \in \R^3,$
\begin{equation}\label{eq:reconstruction}
	f^{\pm}_{\mathrm{bp}}(\br) = \frac{-\i}{2 \pi^2} \int_{\mathcal{U}} \e^{\i T_{\pm}(k_1,k_2,t) \cdot \br}  \ktran_{1,2}u_t(k_1,k_2,\pm r_M) \frac{\kappa \e^{-\i \kappa r_M} \abs{\nabla T_{\pm}(k_1,k_2,t)}}{\operatorname{Card}(T_{\pm}^{-1}(T_{\pm}(k_1,k_2,t)))} d(k_1,k_2,t),
\end{equation}
where $|\nabla T_{\pm}|$ is the magnitude of the Jacobian determinant of $T_{\pm}$.
\end{theorem}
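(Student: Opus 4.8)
The plan is to start from the Fourier inversion integral \eqref{eq:fbp} defining $f^{\pm}_{\mathrm{bp}}$ and to perform the change of variables $\by = T_{\pm}(k_1,k_2,t)$, using the map $T_{\pm}$ introduced in \eqref{eq:T}. Since $T_{\pm}$ maps the three-dimensional parameter set $\mathcal{U}$ onto the three-dimensional frequency coverage $T_{\pm}(\mathcal{U})$, this is a genuine change of variables between regions of $\R^3$, and the Jacobian factor $\abs{\nabla T_{\pm}(k_1,k_2,t)}$ will appear naturally. The subtlety is that $T_{\pm}$ need not be injective: a given $\by \in T_{\pm}(\mathcal{U})$ may have several preimages, which is precisely why the Banach indicatrix $\operatorname{Card}(T_{\pm}^{-1}(\by))$ enters. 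The clean way to handle this is the area/coarea formula (or the general change-of-variables theorem for non-injective Lipschitz maps): for an integrable $\phi$ on $T_\pm(\mathcal{U})$,
\begin{equation*}
  \int_{T_{\pm}(\mathcal{U})} \phi(\by)\, d\by
  = \int_{\mathcal{U}} \frac{\phi(T_{\pm}(k_1,k_2,t))\,\abs{\nabla T_{\pm}(k_1,k_2,t)}}{\operatorname{Card}(T_{\pm}^{-1}(T_{\pm}(k_1,k_2,t)))}\, d(k_1,k_2,t),
\end{equation*}
the denominator compensating for each point of the image being swept out $\operatorname{Card}(T_{\pm}^{-1}(\by))$ times by the integral over $\mathcal{U}$.

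Applying this with $\phi(\by) = \e^{\i \by \cdot \br}\,\ktran f(\by)$ turns \eqref{eq:fbp} into an integral over $\mathcal{U}$ in which $\ktran f(T_{\pm}(k_1,k_2,t))$ appears. At this point I invoke \autoref{thm:fdt} in the rotated form \eqref{eq:recon_n}: solving that identity for $\ktran f\bigl(R_{\bn(t),\alpha(t)}(k_1,k_2,\pm\kappa-k_0)^\tT\bigr) = \ktran f(T_{\pm}(k_1,k_2,t))$ gives
\begin{equation*}
  \ktran f(T_{\pm}(k_1,k_2,t)) = \sqrt{\frac{2}{\pi}}\,\frac{\kappa\, \e^{-\i\kappa r_M}}{\i}\,\ktran_{1,2}u_t(k_1,k_2,\pm r_M),
\end{equation*}
valid on $\mathcal{U}$ since there $k_1^2+k_2^2 < k_0^2$, so $\kappa$ is real and nonzero. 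Substituting this and collecting the scalar constants — $(2\pi)^{-3/2}\cdot\sqrt{2/\pi}\cdot(1/\i) = -\i/(2\pi^2)$ — yields exactly the right-hand side of \eqref{eq:reconstruction}, with the factor $\kappa\,\e^{-\i\kappa r_M}$ and the phase $\e^{\i T_\pm(k_1,k_2,t)\cdot\br}$ in place.

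The main obstacle is justifying the change-of-variables step rigorously: one needs $T_{\pm}$ to be at least Lipschitz (which follows from $\alpha \in C^1[0,L]$, $\bn \in C^1([0,L],\sphere)$ and the smoothness of $\kappa$ on the open set $k_1^2+k_2^2 < k_0^2$, so $T_\pm \in C^1(\mathcal{U})$), and one needs the Banach-indicatrix / area-formula bookkeeping to be valid — in particular that $\operatorname{Card}(T_{\pm}^{-1}(\by))$ is finite and measurable for a.e.\ $\by$, and that the set where $\abs{\nabla T_\pm}$ vanishes contributes nothing. A secondary technical point is integrability: $\ktran f$ is continuous and bounded (as $f \in L^p$ with $p>1$ has compact support, hence $f \in L^1$), $T_\pm(\mathcal{U})$ is bounded, and $\abs{\nabla T_\pm}$ is bounded on $\mathcal{U}$, so all integrals converge and Fubini-type rearrangements are legitimate. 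I would either cite a standard reference for the area formula with multiplicity (e.g.\ in the appendix) or, if $T_\pm$ can be decomposed into finitely many injective pieces, argue piecewise and sum; in both cases the essential identity is the displayed multiplicity-corrected change of variables above, and the rest is substitution of \eqref{eq:recon_n} and arithmetic on constants.
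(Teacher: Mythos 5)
Your proposal follows essentially the same route as the paper: both start from \autoref{eq:fbp}, apply the multiplicity-corrected change of variables with the Banach indicatrix (the paper cites Bogachev, Thm.~5.8.30, once with the constant integrand to conclude from $\abs{\nabla T_\pm}\in L^1(\mathcal{U})$, supplied by \autoref{thm:jacobian}, that $\operatorname{Card}(T_\pm^{-1}(\by))$ is finite almost everywhere, and once more for the integrand $\by\mapsto \e^{\i\by\cdot\br}\ktran f(\by)/\operatorname{Card}(T_\pm^{-1}(\by))$), and then substitute \autoref{eq:recon_n}; your constant bookkeeping $(2\pi)^{-3/2}\sqrt{2/\pi}\,\i^{-1}=-\i/(2\pi^2)$ is correct. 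The only slips are minor: $T_\pm$ is $C^1$, hence locally but not globally Lipschitz on $\mathcal{U}$, and $\abs{\nabla T_\pm}$ is \emph{not} bounded there (it blows up like $k_0/\kappa$ as $k_1^2+k_2^2\to k_0^2$), so the integrability you invoke should instead rest on the $L^1(\mathcal{U})$ bound from \autoref{thm:jacobian} together with the boundedness of $\ktran f$.
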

\begin{proof}
First, we show that $\operatorname{Card}(T_{\pm}^{-1})$ is finite at almost every point in the range of $T_\pm$. By \autoref{thm:jacobian}, we have $|\nabla T_{\pm}| \in L^1(\mathcal{U})$. Applying a change of variables that takes into account the potential noninjectivity of $T_\pm$, see \cite[Thm.\ 5.8.30]{Bog07}, 
gives 
$\int_{T_{\pm}(\mathcal{U})} \operatorname{Card}(T_{\pm}^{-1}) \, d\by= \int_{\mathcal{U}} \abs{\nabla T_{\pm}}\, d\by$. It follows that $\operatorname{Card}(T_{\pm}^{-1}(\by)) < \infty$ for almost every $\by \in T_\pm(\mathcal{U})$.

Next, we can write
\begin{equation*}
	f^{\pm}_{\mathrm{bp}}(\br)
		= (2\pi)^{-\frac32} \int_{T_{\pm}(\mathcal{U})} \e^{\i \by \cdot \br} \ktran f(\by) \frac{\operatorname{Card}(T_{\pm}^{-1}(\by))}{\operatorname{Card}(T_{\pm}^{-1}(\by))}  \, d\by,
\end{equation*}
since $\operatorname{Card}(T_{\pm}^{-1})$ vanishes nowhere and is finite almost everywhere. Applying \cite[Thm.\ 5.8.30]{Bog07} once again, where the function to be integrated is $\by \mapsto \e^{\i \by \cdot \br} \ktran f(\by)/\operatorname{Card}(T_{\pm}^{-1}(\by))$, gives
\begin{equation} \label{eq:approx2g}
	f^{\pm}_{\mathrm{bp}}(\br)
		= (2\pi)^{-\frac32} \int_{\mathcal{U}} \e^{\i T_{\pm}(k_1,k_2,t) \cdot \br} \ktran f(T_{\pm}(k_1,k_2,t)) \frac{\abs{\nabla T_{\pm}(k_1,k_2,t)}}{\operatorname{Card}(T_{\pm}^{-1}(T_{\pm}(k_1,k_2,t)))}  \, d(k_1,k_2,t).
\end{equation}
The assertion now follows after using \autoref{eq:recon_n} to express $\ktran f$ in terms of the measurements.
\end{proof}

In order to actually use the reconstruction formula \eqref{eq:reconstruction}, it remains to calculate the Jacobian determinant $|\nabla T_{\pm}|$ as well as the Banach indicatrix $\operatorname{Card}(T_{\pm}^{-1})$. While the former is carried out in \autoref{thm:jacobian}, the latter is quite a challenging task in general. However, in the remainder of this section we identify special cases for which we can determine $\operatorname{Card}(T_{\pm}^{-1})$.

\begin{lemma}\label{thm:jacobian}
For $\alpha \in C^1[0,L]$ and $\bn\in C^1([0,L],\sphere)$, the Jacobian determinant of $T_\pm$ is given by
\begin{equation}	\label{eq:D-simplified_1}
  \begin{aligned}
  \abs{\nabla T_{\pm}(k_1,k_2,t)}
    = \frac{k_0}{\kappa}
    {}& \left|
    \left( (1-\cos \alpha) \left(
    n_3\, 
    \bn'\cdot\bh
    -n_3' \bn\cdot\bh
    \right)
    -n_3 \, \bn\cdot\left( \bn' \times \bh \right) \sin \alpha 
    \right) \right.
    \\&
    \left.
    - \alpha'\,\left( n_1k_2-n_2k_1 \right) 
    + \left(\bn\cdot\bh\right) \left(n_1n_2'-n_2n_1'\right) \sin \alpha
    \right|
    ,
  \end{aligned}
\end{equation}
	where $\bh \coloneqq ( k_1,k_2, \pm \kappa - k_0)^\tT$. In particular $\abs{\nabla T_{\pm}} \in L^1(\mathcal{U})$.
\end{lemma}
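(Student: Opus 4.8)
The plan is to compute the $3\times3$ Jacobian matrix of $T_\pm$ directly, use orthogonality of the rotation to strip it down, and then recognise the surviving determinant as a scalar triple product that collapses. Write $R(t)\coloneqq R_{\bn(t),\alpha(t)}$ and $\bh=\bh(k_1,k_2)=(k_1,k_2,\pm\kappa-k_0)^\tT$, so $T_\pm(k_1,k_2,t)=R(t)\bh(k_1,k_2)$. Since $\partial_{k_i}\kappa=-k_i/\kappa$ on $\mathcal{U}$, the three columns of $\nabla T_\pm$ are $R\,\partial_{k_1}\bh$, $R\,\partial_{k_2}\bh$, $R'\bh$, with $\partial_{k_1}\bh=(1,0,\mp k_1/\kappa)^\tT$ and $\partial_{k_2}\bh=(0,1,\mp k_2/\kappa)^\tT$. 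Factoring $\det R=1$ out of all three columns gives
\[
  |\nabla T_\pm|=\bigl|\det\!\bigl[\,\partial_{k_1}\bh\mid\partial_{k_2}\bh\mid\Omega\bh\,\bigr]\bigr|,\qquad \Omega\coloneqq R^\tT R',
\]
and since $R^\tT R\equiv I$ the matrix $\Omega$ is skew-symmetric, so $\Omega\bh=\omega\times\bh$ for the angular-velocity vector $\omega=\omega(t)\in\R^3$.

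Next I would evaluate this determinant as the scalar triple product $\langle\partial_{k_1}\bh\times\partial_{k_2}\bh,\,\omega\times\bh\rangle$. A direct computation gives $\partial_{k_1}\bh\times\partial_{k_2}\bh=\pm\kappa^{-1}(\bh+k_0\be_3)$, and since $\omega\times\bh$ is orthogonal to $\bh$, only the $k_0\be_3$-part survives:
\[
  |\nabla T_\pm|=\frac{k_0}{\kappa}\,\bigl|(\omega\times\bh)_3\bigr|=\frac{k_0}{\kappa}\,|\omega_1 k_2-\omega_2 k_1|.
\]
It then remains to write $\omega$ out. Differentiating the Rodrigues representation of $R$ in $t$ (equivalently, differentiating the explicit matrix for $R_{\bn,\alpha}^\tT$ given above entrywise and forming $R^\tT R'$) gives $\omega=\alpha'\bn+\sin\alpha\,\bn'+(1-\cos\alpha)(\bn\times\bn')$, up to a global sign fixed by the convention $R_{\bn,\alpha}^\tT=\exp(\alpha[\bn]_\times)$; this sign is irrelevant because of the absolute value.

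Substituting $\omega$ into $\omega_1 k_2-\omega_2 k_1$ and simplifying produces \eqref{eq:D-simplified_1}. Two elementary facts carry the computation. First, $\bn\cdot\bn'=0$ (differentiate $|\bn|^2\equiv1$): adding and subtracting the terms containing $h_3=\pm\kappa-k_0$ turns the $(1-\cos\alpha)$-part $(\bn\times\bn')_1 k_2-(\bn\times\bn')_2 k_1$ into $n_3'(\bn\cdot\bh)-n_3(\bn'\cdot\bh)$. Second, $\bn\times(\bn\times\bn')=\bn(\bn\cdot\bn')-\bn'=-\bn'$ rewrites the $\sin\alpha$-part $n_1'k_2-n_2'k_1$ as $n_3\langle\bh,\bn\times\bn'\rangle-(\bn\cdot\bh)(n_1 n_2'-n_2 n_1')=n_3\,\bn\cdot(\bn'\times\bh)-(\bn\cdot\bh)(n_1 n_2'-n_2 n_1')$. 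Collecting the three contributions (overall sign absorbed by $|\cdot|$) gives the bracket in \eqref{eq:D-simplified_1}. For the $L^1$ claim, on $\mathcal{U}$ the functions $\alpha,\alpha',\bn,\bn'$ are continuous on the compact interval $[0,L]$, hence bounded, $(k_1,k_2)$ lies in the bounded disk $k_1^2+k_2^2<k_0^2$, so $|\bh|$ is bounded and $0<\kappa\le k_0$; thus the bracket in \eqref{eq:D-simplified_1} is bounded and $|\nabla T_\pm(k_1,k_2,t)|\le C\kappa^{-1}$ for a constant $C$. Passing to polar coordinates, $\int_{\mathcal{U}}\kappa^{-1}\,\dd(k_1,k_2,t)=L\int_0^{k_0}2\pi\rho\,(k_0^2-\rho^2)^{-1/2}\,\dd\rho=2\pi k_0 L<\infty$, so $|\nabla T_\pm|\in L^1(\mathcal{U})$.

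I expect the main obstacle to be the final algebraic reduction: fixing the angular-velocity vector $\omega$ with the correct sign under the transpose convention, and recognising which vector identities transform $\omega_1 k_2-\omega_2 k_1$ into the particular combination of $\bn\cdot\bh$, $\bn'\cdot\bh$ and $\bn\cdot(\bn'\times\bh)$ in the statement. The manipulation is shallow but error-prone; alternatively one can bypass $\omega$ entirely and expand the $3\times3$ determinant of the differentiated Rodrigues matrix head-on, at the price of a longer but entirely mechanical computation.
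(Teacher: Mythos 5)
Your proposal is correct and reaches \autoref{eq:D-simplified_1} by the same structural skeleton as the paper, but with a cleaner execution of the central computation. Both arguments factor the rotation out of the Jacobian and use orthogonality of the third column to $\bh$ to collapse the determinant to $\frac{k_0}{\kappa}\abs{v_3}$. The paper sets $\bv=R_{\bn,-\alpha}\,\partial_t(R_{\bn,\alpha}\bh)$ and obtains $v_3$ by differentiating Rodrigues' formula and simplifying with Grassmann/Lagrange identities, a lengthy explicit vector computation; you instead observe that $\Omega=R^\tT R'$ is skew-symmetric, so the third column becomes $\omega\times\bh$, evaluate the determinant as a triple product using $\partial_{k_1}\bh\times\partial_{k_2}\bh=\pm\kappa^{-1}(\bh+k_0\be_3)$, and import the closed-form axis-angle angular velocity. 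The one delicate point is the sign issue you flagged, and it does work out: with the paper's convention $R_{\bn,\alpha}=\exp(\alpha[\bn]_\times)^\tT$ one gets $R^\tT R'=-[\omega_{\mathrm s}]_\times$, where $\omega_{\mathrm s}=\alpha'\bn+\sin\alpha\,\bn'+(1-\cos\alpha)\,\bn\times\bn'$ is the space-frame angular velocity of $\exp(\alpha[\bn]_\times)$, so the discrepancy from your stated $\omega$ is indeed only a global sign (absorbed by the absolute value); note that taking instead the body-frame velocity of $\exp(\alpha[\bn]_\times)$ would flip the $(1-\cos\alpha)$ term relative to the others and give a wrong bracket, so this is exactly the place where care is needed. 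I checked that your rewriting of $\omega_1k_2-\omega_2k_1$ via $\bn\cdot\bn'=0$ reproduces the paper's bracket up to an overall sign, and your $L^1$ argument (bounded bracket times the integrable factor $1/\kappa$, with $\int_{\mathcal U}\kappa^{-1}\,d(k_1,k_2,t)=2\pi k_0 L$) matches the paper's. What your route buys is brevity and structure, packaging the entire Rodrigues differentiation into the standard formula for $\omega$; what the paper's route buys is self-containedness, since it never needs to state or verify the axis-angle angular-velocity identity.
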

The proof of this lemma is postponed until \autoref{sec:jacobian}.

For rotation axes independent of $t$ the Jacobian determinant of $T_\pm$ simplifies considerably. Moreover, the Banach indicatrix is constant almost everywhere.

\begin{corollary} \label{cor:detaxis}
	Let the assumptions of \autoref{thm:reconstruction} be satisfied. If $\bn'(t) = \bf0$, then
	\begin{equation} \label{eq:detaxis}
		\left\lvert \nabla T_\pm (k_1,k_2,t)\right\rvert = \frac{ k_0 \abs{\alpha'(t)}\abs{ n_2k_1 - n_1k_2 }}{\kappa}.
	\end{equation}
	If in addition $\bn \neq \be_3$ and $\alpha$ is strictly increasing with $\alpha(0) = 0$ and $\alpha(L)=2\pi$, then
	\begin{equation*}
	\operatorname{Card}(T_{\pm}^{-1}(T_{\pm}(k_1,k_2,t))) = 2
	\end{equation*}
	for almost every $(k_1,k_2,t) \in \mathcal{U}$.
\end{corollary}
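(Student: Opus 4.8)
The plan is to specialize the general Jacobian formula \eqref{eq:D-simplified_1} to the case $\bn' = \mathbf 0$ and then, for the Banach indicatrix, to argue that the map $T_\pm$ restricted to $\mathcal U$ is generically two-to-one onto its image. First I would substitute $\bn'(t) = \mathbf 0$ into \eqref{eq:D-simplified_1}: every term containing a factor $\bn'$ or $n_i'$ vanishes, which kills the entire first bracketed line and the $(\bn\cdot\bh)(n_1n_2'-n_2n_1')\sin\alpha$ term, leaving only $-\alpha'(n_1 k_2 - n_2 k_1)$ inside the absolute value. Taking the magnitude gives $|\nabla T_\pm| = \frac{k_0}{\kappa}|\alpha'|\,|n_1 k_2 - n_2 k_1| = \frac{k_0 |\alpha'(t)|\,|n_2 k_1 - n_1 k_2|}{\kappa}$, which is \eqref{eq:detaxis}. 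Note $\kappa > 0$ on $\mathcal U$ since $k_1^2+k_2^2 < k_0^2$, so no sign subtleties arise.

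For the Banach indicatrix, fix the constant axis $\bn$ and recall $T_\pm(k_1,k_2,t) = R_{\bn,\alpha(t)}\,\bh$ with $\bh = (k_1,k_2,\pm\kappa-k_0)^\tT$. The key geometric observation is that, for fixed $\bn$, the surface $\Sigma_\pm := \{\bh(k_1,k_2) : k_1^2+k_2^2 < k_0^2\}$ is (a portion of) a sphere of radius $k_0$ centered at $\mp k_0\be_3$ — this is exactly the Ewald-sphere geometry of Remark~\ref{rem:hemisphere} — and rotating it by $R_{\bn,\alpha}$ sweeps out a solid of revolution about the $\bn$-axis. Since $\alpha$ is a strictly increasing bijection from $[0,L]$ onto $[0,2\pi]$, the parameter $t$ may be replaced by $\alpha \in [0,2\pi)$, so $T_\pm^{-1}(\by)$ for a generic $\by$ in the image counts the pairs $(\bh, \alpha)$ with $\bh \in \Sigma_\pm$ and $R_{\bn,\alpha}\bh = \by$. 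For each such $\by$, the orbit $\{R_{\bn,\alpha}\by : \alpha \in [0,2\pi)\}$ is a circle $C_\by$ lying in the plane through $\by$ orthogonal to $\bn$; the preimages correspond exactly to the intersection points $C_\by \cap \Sigma_\pm$, each such intersection point $\bh$ determining a unique $\alpha$ (the rotation angle carrying $\bh$ to $\by$). So the task reduces to showing that a generic circle of this family meets the spherical cap $\Sigma_\pm$ in exactly two points.

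The circle $C_\by$ lies on a sphere (centered at $0$, radius $|\by|$) and in a plane orthogonal to $\bn$; the surface $\Sigma_\pm$ lies on a sphere centered at $\mp k_0\be_3$ of radius $k_0$. The intersection of $C_\by$ with the full sphere containing $\Sigma_\pm$ is the intersection of a circle with a sphere, hence generically $0$ or $2$ points (it is the intersection of the plane of $C_\by$ with that sphere — a circle — intersected with $C_\by$, two circles in a common plane, meeting in $0$ or $2$ points in the nondegenerate case). One then has to check that for $\by$ in the interior of the image of $T_\pm$ both intersection points actually fall inside the open cap $k_1^2+k_2^2 < k_0^2$ rather than on its boundary or on the complementary cap, and that the degenerate (tangential, one-point, or circle-contained) configurations form a set of measure zero in the image; the hypothesis $\bn \neq \be_3$ is what prevents the rotation axis from being aligned with the axis of $\Sigma_\pm$, in which case $C_\by$ would be a latitude circle of $\Sigma_\pm$ and the count would degenerate to $0$ or $\infty$. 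The main obstacle is precisely this measure-zero bookkeeping: identifying the exceptional set of $\by$ (tangencies of $C_\by$ with $\Sigma_\pm$, points where an intersection point lands exactly on $k_1^2+k_2^2 = k_0^2$, and the image boundary $\partial T_\pm(\mathcal U)$) and confirming it is Lebesgue-null, so that \emph{almost every} $\by$ — equivalently, by the change-of-variables identity $\int_{T_\pm(\mathcal U)}\operatorname{Card}(T_\pm^{-1})\,d\by = \int_{\mathcal U}|\nabla T_\pm|\,d(k_1,k_2,t)$ from the proof of Theorem~\ref{thm:reconstruction}, almost every $(k_1,k_2,t)\in\mathcal U$ — has $\operatorname{Card}(T_\pm^{-1}) = 2$. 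The rest is a routine sphere–circle incidence computation that I would not carry out in detail here.
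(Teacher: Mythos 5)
Your proposal is correct in substance and, despite its synthetic phrasing, follows essentially the same route as the paper: the paper's two invariants --- preservation of the norm (\autoref{eq:samenorm}) and of the component along the axis (\autoref{eq:sameplane}) --- are precisely your observation that all preimages of a point $\by$ lie on the orbit circle around $\bn$, and the paper then counts the intersections of that circle with the hemisphere. The one caveat is that the step you defer as a ``routine sphere--circle incidence computation'' is exactly where the paper does its work: in coordinates the two constraints reduce to a quadratic in a parameter $\lambda$ with roots $\lambda_1=0$ and $\lambda_2=2(\hat k_2 n_1-\hat k_1 n_2)/(n_1^2+n_2^2)$, so the second preimage in $(k_1,k_2)$ is the reflection of $(\hat k_1,\hat k_2)$ across the line through the origin with direction $(n_1,n_2)$ (this is where $n_1^2+n_2^2\neq 0$ enters), and for each of the two points the bijectivity of $\alpha$ onto $[0,2\pi]$ yields exactly one $t$; the degeneracies (points on the axis, $\hat t\in\{0,L\}$, the coincidence $\lambda_2=\lambda_1$, i.e.\ your tangency case) are then explicit null sets of $\mathcal{U}$. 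Two of the checks you flag are easy but worth recording: both intersection points automatically lie in the \emph{open} cap, since on the sphere carrying $\bh^\pm$ the condition $k_1^2+k_2^2<k_0^2$ is equivalent to $\norm{\bh^\pm}^2=2k_0(k_0\mp\kappa)$ being strictly less than $2k_0^2$ (transmission) resp.\ greater than $2k_0^2$ (reflection), and the orbit circle has constant norm $\norm{\by}$; and your transfer of an almost-everywhere statement on $T_\pm(\mathcal{U})$ back to almost every $(k_1,k_2,t)\in\mathcal{U}$ via \cite[Thm.\ 5.8.30]{Bog07} additionally requires noting that the critical set $\{n_2k_1=n_1k_2\}$, where $\abs{\nabla T_\pm}$ vanishes by \autoref{eq:detaxis}, is null in $\mathcal{U}$ --- the paper sidesteps this entirely by counting preimages directly in the parameter domain. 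Your derivation of \autoref{eq:detaxis} from \autoref{eq:D-simplified_1} coincides with the paper's.
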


\begin{proof}
	\autoref{eq:detaxis} is a direct consequence of \autoref{eq:D-simplified_1}. It remains to show the second statement.
  Let $(\hat k_1, \hat k_2, \hat t) \in \mathcal{U}$ be given.
  We want to find the number of points $(k_1,k_2,t) \in \mathcal{U}$ which satisfy
  \begin{equation}\label{eq:numsol}
  T_\pm (k_1,k_2,t) = T_\pm (\hat k_1, \hat k_2, \hat t).
  \end{equation}
 First, we point out that there can be no $t \neq \hat t$ such that 
  \begin{equation*}
  T_\pm (\hat k_1,\hat k_2,t) = T_\pm (\hat k_1, \hat k_2, \hat t),
  \end{equation*}
  unless $T_\pm (\hat k_1, \hat k_2, \hat t)$ lies on the rotation axis or $\hat t \in \{0,L\}$. These cases, however, correspond to a subset $\mathcal{V} \subset \mathcal{U}$ of measure zero.

Thus we look for $(k_1,k_2) \neq (\hat k_1,\hat k_2)$. Denote a general point on the hemisphere by $\bh^\pm = \bh^\pm (k_1,k_2) \coloneqq (k_1,k_2,\pm \kappa-k_0)^\tT$ and set $\hat{\bh}^\pm \coloneqq \bh^\pm (\hat k_1,\hat k_2)$. Since $T_\pm$ is the composition of $\bh^\pm$ with a rotation and rotations preserve the lengths of vectors, we deduce from \autoref{eq:numsol} that
\begin{equation}\label{eq:samenorm}
	\| \bh^\pm \| = \| \hat{\bh}^\pm \|.
\end{equation}
Calculating these norms shows that $\kappa(k_1,k_2) = \kappa(\hat k_1,\hat k_2)$ and that
  \begin{equation} \label{eq:k12-equal}
    k_1^2+k_2^2
    = \hat k_1^2+\hat k_2^2.
  \end{equation}
  Furthermore, the rotation $R_{\bn,\alpha}$ does not affect the vector component in the direction of the rotation axis $\bn$, that is,
	\begin{equation}\label{eq:sameplane}
  \bh^\pm \cdot \bn = R_{\bn, \alpha(t)} \bh^\pm \cdot \bn
  = R_{\bn, \alpha(\hat t)} \hat{\bh}^\pm \cdot \bn = \hat{\bh}^\pm \cdot \bn
	\end{equation}
  and consequently
  $$
  0
  =\bn\cdot(\bh^\pm - \hat\bh^\pm )
  = n_1(k_1-\hat k_1) + n_2(k_2-\hat k_2).
  $$
  Since, by assumption, $n_1^2+n_2^2\neq0$, there exists $\lambda\in\R$ such that
  $
  k_1-\hat k_1=-\lambda n_2
  $ and 
  $
  k_2-\hat k_2=\lambda n_1.
  $
  Hence,
  $(k_1, k_2) = (\hat k_1 - \lambda n_2, \hat k_2+ \lambda n_1)$. Now we conclude from
  \autoref{eq:k12-equal} that 
  $$
  0
  = \lambda^2(n_1^2+n_2^2) +2\lambda (\hat k_2 n_1-\hat k_1 n_2),
  $$
  which has the two solutions $\lambda_1=0$ and $\lambda_2 = 2(\hat k_2 n_1-\hat k_1 n_2)/(n_1^2 + n_2^2)$. The former corresponds to $(k_1, k_2) = (\hat k_1, \hat k_2)$ and the latter to a reflection of $(\hat k_1,\hat k_2)$ across the line passing through the origin with direction $(n_1,n_2)$. We can ignore the possibility that the two solutions coincide, as this corresponds to a set of measure zero.
  
It remains to count the $t\in [0,L]$ which satisfy
\begin{equation}\label{eq:samecircle}
T_\pm(\hat k_1 - \lambda_2 n_2, \hat k_2+ \lambda_2 n_1, t) = T_\pm (\hat k_1, \hat k_2, \hat t).
\end{equation}
\autoref{eq:samenorm} and \autoref{eq:sameplane} imply that during rotation $\hat \bh^\pm$ and $\bh^\pm(\hat k_1 - \lambda_2 n_2, \hat k_2+ \lambda_2 n_1) $ move along the same circle around the rotation axis. Since, by assumption, the range of $\alpha$ is $[0,2\pi]$, both points on the hemisphere make a full turn. So there must be at least one $t$ satisfying \autoref{eq:samecircle}. But since $\alpha$ is also bijective, there is exactly one such $t$. Thus we have shown that for almost every $(\hat k_1, \hat k_2, \hat t) \in \mathcal{U}$ there is exactly one other point $(k_1, k_2, t)$ such that \autoref{eq:numsol} holds. Therefore
 $\operatorname{Card}(T_{\pm}^{-1}(T_{\pm}(\hat k_1,\hat k_2,\hat t))) = 2$ almost everywhere in $\mathcal{U}$.
\end{proof}

Let us consider a simple example which is also treated in the numerical part.

\begin{example}[Full uniform rotation around the $r_1$-axis] \label{ex1}
	We consider rotation around the $r_1$-axis with rotation matrix
	\begin{equation*}
		R_{\be_1,\alpha}^\tT=
		\begin{pmatrix}
			1 & 0 & 0\\ 0 & \cos \alpha &-\sin \alpha \\ 0 &\sin \alpha  & \phantom{-} \cos \alpha 
		\end{pmatrix}
	\end{equation*}
	and $\alpha(t) = t$, $t \in [0,2\pi]$. Instead of $t$ we write $\alpha$ below.
From \autoref{cor:detaxis} it follows that
	\begin{equation} \label{eq:approx2}
		f^{\pm}_{\mathrm{bp}} (\br)=\frac{-\i k_0}{4\pi^2}
		\int_{\mathcal{U}}	\e^{\i (R_{\be_1,\alpha}\bh^{\pm}\cdot\br-\kappa r_M)}\ktran_{1,2}u_\alpha(k_1,k_2,\pm r_M)\abs{k_2}\, d(k_1,k_2,\alpha) \,,
	\end{equation}
  where $\bh^{\pm} \coloneqq (k_1,k_2,\pm\kappa-k_0)^\tT$.

We want to illustrate the corresponding sets $T_{\pm}(\mathcal U)$. 
Consider $\by\in T_{+}(\mathcal{U})$, i.e. $\by = (y_1,y_2,y_3)^\tT= R_{\be_1,\alpha} (k_1,k_2,\kappa-k_0)^\tT $ where $k_1^2+k_2^2<k_0^2$ and $0\leq\alpha\leq 2\pi$.
Then, it holds that $y_1=k_1$ and
\begin{align*}
y_2^2 + y_3^2
&= k_2^2 + (\kappa-k_0)^2
= k_2^2 + k_0^2- k_1^2-k_2^2 - 2\kappa k_0 + k_0^2
\\&
= 2k_0^2 - 2 k_0 \sqrt{k_0^2-k_1^2-k_2^2} - k_1^2.
\end{align*}
Since  $k_2^2 \in[0, k_0^2-k_1^2)$, this implies
\begin{align*}
y_2^2 + y_3^2 
&< 2k_0^2  - y_1^2,
\\
y_2^2+y_3^2
&\ge
2k_0^2 - 2 k_0 \sqrt{k_0^2-y_1^2} - y_1^2
= \left(k_0-\sqrt{k_0^2-y_1^2}\right)^2.
\end{align*}
The first equation can be rewritten as $\norm{\by}^2 < 2 k_0^2$, while
the second equation gives
\begin{align*}
\|(y_2,y_3)\| &\ge k_0 - \sqrt{k_0^2-y_1^2}\\
k_0 - \|(y_2,y_3)\| &\le \sqrt{k_0^2-y_1^2}.
\end{align*}
Noting that $k_0 - \|(y_2,y_3)\| > -  \sqrt{k_0^2-y_1^2}$ and
taking the square, we get $(\norm{(y_2,y_3)}-k_0)^2 + y_1^2 \le k_0^2$.
In summary, we see that
\begin{equation} \label{eq:Aaxis}
T_+(\mathcal{U})
= \{ \by\in\R^3 : \norm{\by}^2 < 2 k_0^2,\, (\norm{(y_2,y_3)}-k_0)^2 + y_1^2 \le k_0^2 \},
\end{equation}
which is displayed in \autoref{fig:cover2}. Similar considerations for $T_-(\mathcal{U})$ show that this set together with $T_+(\mathcal{U})$ generates a solid horn torus of radius $k_0$, see \autoref{fig:cover2} right. Finally we point out that a rotation around any other axis in the $r_1$-$r_2$-plane leads to a k-space coverage $T_\pm (\mathcal{U})$ which is a rotated version of the one displayed in \autoref{fig:cover2}.
\end{example}

\begin{figure}[h!]
  \centering
  \includegraphics[width=0.35\textwidth]{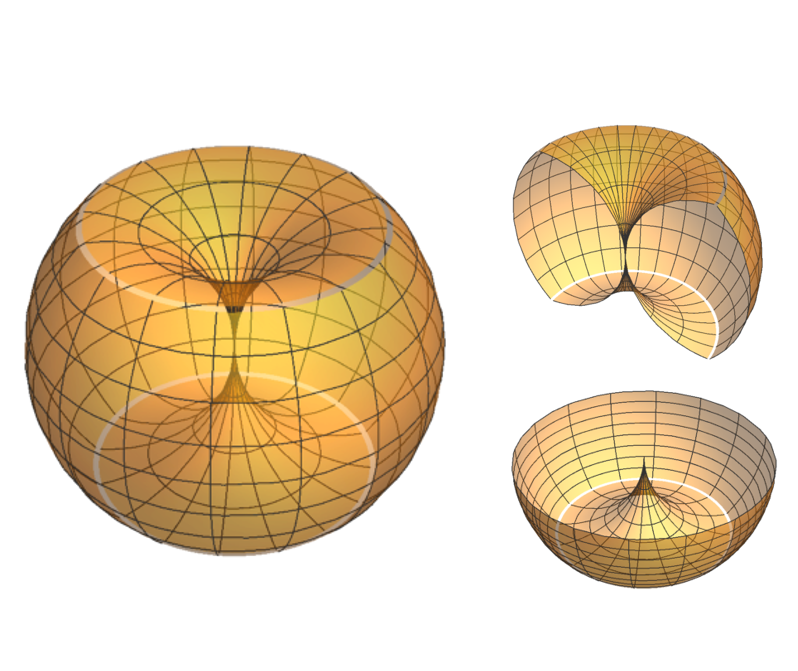} \qquad
  \begin{tikzpicture}[scale=0.7]
    \draw[->]   (0,-4) -- (0,4) node[below left] {$k_1$};
    \draw (0,0) circle (2.12);
    \draw (0,0) circle (3);
    \draw (1.5,1.5) arc (90:270:1.5);
    \draw (-1.5,-1.5) arc (-90:90:1.5);
    \draw (1.5,-1.5) arc (-90:90:1.5);
    \draw (-1.5,1.5) arc (90:270:1.5);
    \fill[red] (1.5,1.5) arc (90:270:1.5) -- (1.5,-1.5) arc (-45:45:2.12);
    \fill[red] (-1.5,-1.5) arc (-90:90:1.5) -- (-1.5,1.5) arc (135:225:2.12);
    \draw[->]   (0,0) -- (-1.5,1.5);
    \node[below] at (-1,0.8) {$\sqrt{2}k_0$};
    \draw[->]   (1.5,0) -- (0.5,-1.12);
    \node[below] at (1.17,-0.6) {$k_0$};
    \draw[->]   (0,0) -- (1,2.83);
    \node[below] at (1.2,2.5) {$2k_0$};
  \end{tikzpicture} \qquad 
  \begin{tikzpicture}[scale=0.7]
    \draw[->]   (0,-4) -- (0,4) node[below left] {$k_1$};
    \draw (0,0) circle (2.12);
    \draw (0,0) circle (3);
    \draw (1.5,1.5) arc (90:270:1.5);
    \draw (-1.5,-1.5) arc (-90:90:1.5);
    \draw (1.5,-1.5) arc (-90:90:1.5);
    \draw (-1.5,1.5) arc (90:270:1.5);
    \fill[red] (1.5,1.5) arc (45:-45:2.12) -- (1.5,-1.5) arc (-90:90:1.5);
    \fill[red] (-1.5,-1.5) arc (225:135:2.12) -- (-1.5,1.5) arc (90:270:1.5);
    \draw[->]   (0,0) -- (-1.5,1.5);
    \node[below] at (-1,0.8) {$\sqrt{2}k_0$};
    \draw[->]   (1.5,0) -- (0.5,-1.12);
    \node[below] at (1.17,-0.6) {$k_0$};
    \draw[->]   (0,0) -- (1,2.83);
    \node[below] at (1.2,2.5) {$2k_0$};
  \end{tikzpicture}
  \caption{Frequency coverage $T_{\pm}(\mathcal{U})$ for a full rotation about the $r_1$-axis. 
    \emph{Left:} 3D visualization of $T_+(\mathcal{U})$ (transmission imaging).
    \emph{Middle:} Cross section of $T_+(\mathcal{U})$.
    \emph{Right:} Cross section of $T_-(\mathcal{U})$ (reflection imaging).}
  \label{fig:cover2}
\end{figure}

\begin{remark}[Maximal and minimal k-space coverage]
Subsequent rotations about two orthogonal axes in the $r_1$-$r_2$-plane generate a radially symmetric k-space coverage according to \autoref{fig:cover3}. This is the maximal coverage that can be obtained within the experimental setup. In contrast, the rotation around the $r_3$-axis does not provide additional information in k-space, recall \autoref{fig:coverage}.
\end{remark}

\begin{figure}[h!]
	\centering	
    \begin{tikzpicture}[scale=0.7]
	\fill[red] (0,0) circle (2.12);
	\draw[->]   (0,0) -- (-1.5,1.5);
	\node[below] at (-1,0.8) {$\sqrt{2}k_0$};
	\draw[->]   (0,0) -- (1,2.83);
	\node[below] at (1.2,2.5) {$2k_0$};
	\draw[->]   (0,-4) -- (0,4) node[below left] {$k_1$};
	\draw (0,0) circle (2.12);
	\draw (0,0) circle (3);
	\end{tikzpicture} \qquad 
	\begin{tikzpicture}[scale=0.7]
	\fill[red] (0,0) circle (3);
	\fill[white] (0,0) circle (2.12);
	\draw[->]   (0,0) -- (-1.5,1.5);
	\node[below] at (-1,0.8) {$\sqrt{2}k_0$};
	\draw[->]   (0,0) -- (1,2.83);
	\node[below] at (1.2,2.5) {$2k_0$};
	\draw[->]   (0,-4) -- (0,4) node[below left] {$k_1$};
	\draw (0,0) circle (2.12);
	\draw (0,0) circle (3);
	\end{tikzpicture}
	\caption{Frequency coverage for subsequent rotations around two orthogonal axes in the $r_1$-$r_2$ plane. 
	\emph{Left:} Cross section of transmission imaging. \emph{Right:} Cross section of reflection imaging.}
	\label{fig:cover3}
\end{figure}
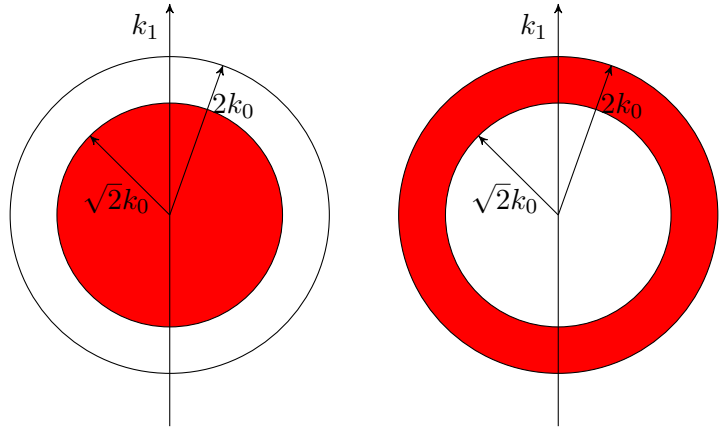

In the previous examples, the rotation axis $\bn$ was kept constant.
In the following, we consider a moving rotation axis $\bn(t)$, $t \in [0,2\pi]$, in the setup of transmission imaging.

\begin{proposition} \label{prop:mov-axis}
 Let $\bn \in C^1(([0,2\pi], \sphere)$ and $\alpha\in C^1[0,L]$.
 We denote the trajectory of $\be_3=(0,0,1)^\tT$ under $T_+$ by
  $$
  \be(t)
  \coloneqq
  R_{\bn(t),\alpha(t)} \be_3,
  \quad t\in[0,L].
  $$
  Let $\by\in\R^3$ with $\norm{\by}< \sqrt2k_0$.
  Then, there exists $(k_1,k_2,t)\in\mathcal{U}$ with $T_+(k_1,k_2,t)=\by$
  if and only if
  \begin{equation} \label{eq:ye}
    \by\cdot \be(t)
    = -\frac{\norm{\by}^2}{2k_0}.
  \end{equation}
  Moreover,
  $\operatorname{Card}(T_+^{-1}(\by))$ 
	equals the number of solutions $t\in[0,L]$ of \autoref{eq:ye}.
\end{proposition}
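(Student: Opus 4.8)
The plan is to recognize the reconstruction hemisphere as a spherical cap passing through the origin, so that a rotated copy of it meets a prescribed point $\by$ precisely under one scalar condition. Writing $\bh^+(k_1,k_2)\coloneqq(k_1,k_2,\kappa-k_0)^\tT$ with $\kappa$ as in \eqref{eq:kappa}, so that $T_+(k_1,k_2,t)=R_{\bn(t),\alpha(t)}\bh^+(k_1,k_2)$, I would first observe that $\bh^+(k_1,k_2)+k_0\be_3=(k_1,k_2,\kappa)^\tT$ has Euclidean norm $k_0$; hence every $\bh^+(k_1,k_2)$ lies on the sphere $S$ of radius $k_0$ centered at $-k_0\be_3$, and a short check shows that $\bh^+$ is injective with image exactly the open hemisphere $\{Q\in S:(Q+k_0\be_3)\cdot\be_3>0\}$, the strictness matching the constraint $k_1^2+k_2^2<k_0^2$ built into $\mathcal{U}$. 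Since $R_{\bn(t),\alpha(t)}$ fixes the origin, preserves lengths and sends $-k_0\be_3$ to $-k_0\be(t)$, it follows that $T_+(\cdot,\cdot,t)$ is a bijection from that hemisphere onto $\{Q\in S_t:(Q+k_0\be(t))\cdot\be(t)>0\}$, where $S_t$ denotes the sphere of radius $k_0$ centered at $-k_0\be(t)$.

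Next I would establish \eqref{eq:ye}. For the ``only if'' direction: if $T_+(k_1,k_2,t)=\by$ with $(k_1,k_2,t)\in\mathcal{U}$, then $\by\in S_t$, i.e.\ $\|\by+k_0\be(t)\|^2=k_0^2$; expanding and using $\|\be(t)\|=1$ yields $\|\by\|^2+2k_0\,\by\cdot\be(t)=0$, which is \eqref{eq:ye}. For the ``if'' direction, assuming \eqref{eq:ye} for some $t\in[0,L]$, the same expansion run backwards gives $\by\in S_t$, and it remains to check that $\by$ lies in the correct open half, i.e.\ $(\by+k_0\be(t))\cdot\be(t)>0$, equivalently $\by\cdot\be(t)>-k_0$; by \eqref{eq:ye} this reads $-\|\by\|^2/(2k_0)>-k_0$, i.e.\ $\|\by\|<\sqrt2\,k_0$, which is the standing hypothesis on $\by$. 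Hence $\by$ is in the image of $T_+(\cdot,\cdot,t)$, so there is $(k_1,k_2)$ with $k_1^2+k_2^2<k_0^2$ and $T_+(k_1,k_2,t)=\by$.

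For the cardinality claim, I would use that each slice $T_+(\cdot,\cdot,t)=R_{\bn(t),\alpha(t)}\circ\bh^+$ is injective, being a composition of the injection $\bh^+$ with a bijection. Thus every $t$ solving \eqref{eq:ye} contributes exactly one element of $T_+^{-1}(\by)$, namely the one whose $(k_1,k_2)$ are the first two coordinates of $R_{\bn(t),\alpha(t)}^{-1}\by$ (which automatically lie in the disc $k_1^2+k_2^2<k_0^2$ by the computation of the first paragraph); and, conversely, the $t$-component of any element of $T_+^{-1}(\by)$ solves \eqref{eq:ye} by the ``only if'' direction. This sets up a bijection between $T_+^{-1}(\by)$ and $\{t\in[0,L]:\by\cdot\be(t)=-\|\by\|^2/(2k_0)\}$, giving the asserted equality of cardinalities.

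I expect the only delicate point to be the bookkeeping of the \emph{open} versus closed hemisphere: one must make sure that the strict inequality $\|\by\|<\sqrt2\,k_0$ is exactly what places $\by$ on the open hemisphere of $S_t$ rather than on its bounding equator, and, symmetrically, that the recovered frequency $(k_1,k_2)$ meets the strict constraint $k_1^2+k_2^2<k_0^2$ defining $\mathcal{U}$ rather than merely $k_1^2+k_2^2\le k_0^2$. Everything else is routine once the hemisphere is identified with a cap of a sphere through the origin, and in fact only continuity of $\bn$ and $\alpha$ (rather than $C^1$) is used here.
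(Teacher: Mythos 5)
Your proof is correct and takes essentially the same route as the paper's: the equivalence of \eqref{eq:ye} with membership of $\by$ in the sphere of radius $k_0$ centered at $-k_0\be(t)$ is exactly the paper's computation $\norm{\by}^2=\norm{\bh}^2=2k_0(k_0-\kappa)$ combined with rotation invariance of the scalar product, and both arguments obtain the cardinality statement from the injectivity of each slice map $(k_1,k_2)\mapsto T_+(k_1,k_2,t)$. Your geometric packaging of the accessible set as an open spherical cap through the origin is a tidy way to track the strict inequalities ($\norm{\by}<\sqrt2 k_0$ versus $k_1^2+k_2^2<k_0^2$), but it is the same verification the paper performs directly on $\bk=R_{\bn(t),-\alpha(t)}\by$, checking $k_3=\kappa-k_0$ and $k_1^2+k_2^2<k_0^2$.
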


\begin{proof}
  Assume that there exists $(k_1,k_2,t)\in\mathcal{U}$ such that
  $\by = T_+(k_1,k_2,t) = R_{\bn(t),\alpha(t)} \bh$ with
  $\bh = (k_1,k_2,\kappa-k_0)$.
  Since the norm is invariant under rotations, it holds that
  \begin{equation} \label{eq:opt}
    \norm{\by}^2
    = \norm{\bh}^2
    = 2k_0(k_0-\kappa),
  \end{equation}
  which implies that $\kappa = \sqrt{k_0^2-k_1^2-k_2^2}$ depends only on $\norm{\by}$
  and, by the invariance of the scalar product under the rotation $R_{\bn(t),-\alpha(t)}$, it follows from 
  \autoref{eq:opt} that
  $$
  \by\cdot \be(t)
  =
  \bh\cdot \be_3
  = h_3
  = \kappa-k_0
  = -\frac{\norm{\by}^2}{2k_0},
  $$
  which shows \autoref{eq:ye}.
  
  Conversely, let $\by\in\R^3$ with $\norm{\by}< \sqrt2k_0$, and let $t$ satisfy \autoref{eq:ye}.
  Set 
  $
  \bk = (k_1,k_2,k_3)^\tT \coloneqq R_{\bn(t),-t}\by.
  $
  We show that
  $(k_1,k_2,t)\in{\mathcal{U}}$
  and
  $T_+(k_1,k_2,t)=\by$, i.e, that
  $
  k_3 = \sqrt{k_0^2-k_1^2-k_2^2} - k_0.
  $
  We have by \autoref{eq:ye} and the invariance of the scalar product and vector lengths with respect to rotations
  $$
   k_3
  =\bk \cdot \be_3
  = \by \cdot \be(t)
  = \frac{-\norm{\by}^2}{2k_0} = \frac{-\norm{\bk}^2}{2k_0}
  > - k_0
  $$
	so that $k_3+k_0 > 0$.
	Moreover, the above equation implies that
	\begin{equation} \label{eq:opt2}
	(k_3 + k_0)^2 = -k_1^2 - k_2^2 + k_0^2.
	\end{equation}
	Thus, $k_1^2+k_2^2 < k_0^2$, 
	so that $(k_1,k_2,t)\in\mathcal U$.
	Taking  the square root in \autoref{eq:opt2}, we obtain the desired form of $k_3$.
	Since, for fixed $t$, the map
  $(\tilde k_1,\tilde k_2)\mapsto T_+(\tilde k_1,\tilde k_2,t)$
  is one-to-one, 
  we have shown the assertion.
\end{proof}

\begin{example}[Half rotation around the $r_1$-axis] \label{ex:half-rotation}
  We consider the same fixed rotation axis as in \autoref{ex1},
  but we restrict the angle $\alpha(t)=t$, $t\in[0,\pi]$.
  In what follows, we show that the Banach indicatrix $\operatorname{Card}(T_+^{-1}(\cdot))$ takes different values on sets of positive measure, see \autoref{fig:coverage-half}.
  
  Let $\by\in T_+(\mathcal{U}) \setminus\{\mathbf0\}$.
  Since $T_+(\mathcal{U})$ is a subset of the k-space coverage for the full rotation, \autoref{eq:Aaxis} shows that $\norm{\by}^2 < 2 k_0^2$ and $(\norm{(y_2,y_3)}-k_0)^2 + y_1^2 \le k_0^2$, which yields $\norm{\by}^2\le 2k_0 \sqrt{y_2^2+y_3^2}$.
  By \autoref{prop:mov-axis}, the Banach indicatrix $\operatorname{Card}(T_+^{-1}(\by))$ is equal to the number of solutions $t\in[0,\pi]$ of 
  \begin{equation}\label{eq:ye-h}
    y_2 \sin t + y_3 \cos t 
    = -\frac{\norm{\by}^2}{2k_0}.
  \end{equation}
  Let us denote the left side by
  $$\psi(t)\coloneqq y_2 \sin t + y_3 \cos t,\quad t\in[0,\pi].$$
  If $y_2,y_3\neq0$, the derivative $\psi'(t) = y_2\cos t - y_3\sin t$ has only one zero $t_0=\arctan\frac{y_2}{y_3}$ in $[0,\pi]$, where we use the branch of the arctangent with range $[0,\pi)$.
  If $y_3=0$ and $y_2\neq0$, then $\psi'$ has the unique zero $t_0=\frac\pi2$, which comes along with setting $\arctan \frac{y_2}{0}=\frac\pi2$.
  If $y_2=0$ and $y_3\neq0$, then $\psi$ is monotone on $[0,\pi]$ and thus \autoref{eq:ye-h} has one solution.
  If $y_2=y_3=0$, also $y_1=0$ and thus there are infinitely many solutions.
  
  In the case $y_2<0$,
  we see that the unique zero $t_0\in[0,\pi]$ of $\psi'$ fulfills ${\sin t_0}= y_2 {\sqrt{y_2^2+y_3^2}}$
  and ${\cos t_0}= y_3{\sqrt{y_2^2+y_3^2}}$.
  Then we obtain $\psi(t_0) = -\sqrt{y_2^2+y_3^2}<0$.
  The second derivative is $\psi''(t) = -\psi(t)$ and hence $\psi''(t_0)>0$.
  Together with the continuity of $\psi'$, this implies that $\psi$ decreases on $[0,t_0]$ and increases on $[t_0,\pi]$.
  We have the two local maxima $\psi(0)=y_3$ and $\psi(\pi)=-y_3$ and the minimum $\psi(t_0)=-\sqrt{y_2^2+y_3^2}$.
  Hence, \autoref{eq:ye-h} has two solutions if $-\frac{\norm{\by}^2}{2k_0} \le -\abs{y_3}$ and otherwise one solution if $0>-\frac{\norm{\by}^2}{2k_0} >- \abs{y_3}$.
  Note that we have already seen that ${\norm{\by}^2}
  \le {2k_0} \sqrt{y_2^2+y_3^2}$.
  
  An analogous consideration for $y_2>0$ shows that $\phi$ has its maximum $\sqrt{y_2^2+y_3^2}$ at $t_0$ and goes monotonically to the minima at the boundary $\psi(0)=y_3$ and $\psi(\pi)=-y_3$.
  Then \autoref{eq:ye-h} has one solution if $\frac{-\norm{\by}^2}{2k_0}\ge -\abs{y_3}$ and no solution otherwise.
  In conclusion, we have for $0<\norm{\by}^2< {2} k_0^2$,
  \begin{equation*}
    \operatorname{Card}(T_+^{-1}(\by))
    =\begin{cases}
      2 ,& y_2 < 0 $ and $ 2k_0\abs{y_3} \le \norm{\by}^2 \le 2k_0\sqrt{y_2^2+y_3^2},\\
      1 ,& y_2 < 0 $ and $ 2k_0\abs{y_3} > \norm{\by}^2,\\
      1 ,& y_2 > 0 $ and $ 2k_0\abs{y_3} \ge \norm{\by}^2,\\
      1 ,& y_2 = 0 ,\\
      0 ,& $otherwise$.
    \end{cases}
  \end{equation*}
  \begin{figure}\centering
    \begin{tikzpicture}[scale=0.66]
      \draw[gray,->,dashed]   (-3,0) -- (3,0) node[below,black] {$y_2$};
      \draw[gray,->,dashed]   (0,-2.5) -- (0,3) node[below left,black] {$y_3$};
      \fill[red,opacity=.5] ( 1.5,1.5) arc ( 0:-180:1.5) -- (1.5,1.5) arc (45:135:2.12);
      \fill[red,opacity=.5] (-1.5,-1.5) arc (180:0:1.5) -- (-1.5,-1.5) arc (225:315:2.12);
      \draw[black] (0,0) circle (2.12);
      \node[blue] at (-1.6,0.35) {$2$};
      \node[black] at ( 1.6,0.35) {$0$};
      \node[black] at (-0.25, 1.4) {$1$};
      \node[black] at (-0.25,-1.4) {$1$};
    \end{tikzpicture}
    \caption{Areas of constant Banach indicatrix $\operatorname{Card}( T^{-1}(\by))$ for the half rotation, sectional plot at $y_1=0$. \label{fig:coverage-half} }
  \end{figure}
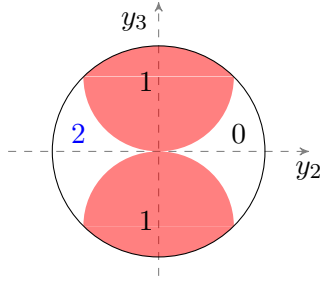
\end{example}

\begin{example}[Moving rotation axis] \label{ex:mov-axis}
  For $c>0$ arbitrary fixed, we consider the moving rotation axes
  $$
  \bn(t) = (\cos(c\,\sin t), \sin(c\,\sin t), 0)^\tT,
  $$ 
  and the rotation angle $\alpha(t) = t\in[0,2\pi]$.
  The axes $\bn(t)$ oscillate slightly around $\be_1$ in the $r_1$-$r_2$ plane.
  Then
  $$
  \be(t)
  =
  (-\sin(c\,\sin t)\, \sin t,
  \cos(c\,\sin t)\, \sin t,
  \cos t )^\tT.
  $$
  We consider
  $
  \by = (y_1,0,0)^\tT \in T_+(\mathcal U).
  $
  Then \autoref{prop:mov-axis} implies that $\operatorname{Card}(T_+^{-1}(\by))$ is the number of solutions $t$ of
  \begin{equation} \label{eq:ye_ex}
  \by\cdot\be(t)
  = -y_1 \sin(c\,\sin t)\, \sin t
  = -\frac{y_1^2}{2k_0}.
  \end{equation}
	We show that \autoref{eq:ye_ex}
  has 4 solutions if $0< y_1 < 2k_0 \sin c$.
  Since the function 
  $
  (0,\frac\pi2)\ni t
  \mapsto \sin(c\,\sin t)\, \sin t
  $
  is the composition and product of the positive, strictly increasing sine function, it is
  strictly increasing with supremum $\sin c$.
  So there is one solution $\hat t \in (0,\frac\pi2)$ of \autoref{eq:ye_ex}.
  The other three solutions $\pi-\hat t$, $\pi+ \hat t$, and $2\pi- \hat t$ follow by the symmetry of the sine.
  Hence, we see that $\operatorname{Card}(T_+^{-1}(\by))=4$.
  Since $T_+(\mathcal U)\ni\by\mapsto\by\cdot\be(t)$ is continuous, this result holds also in a small neighborhood of $\by$.
  However, for the point $\by=(0,0,y_3)$, we see that \autoref{eq:ye} becomes
  \begin{equation}\label{eq:ye_ex2}
  -y_3 \cos t = -\frac{y_3^2}{2k_0}.
  \end{equation}
  If $0<y_3<2k_0$, \autoref{eq:ye_ex2} has 2 solutions $t\in(0,2\pi)$ and $\operatorname{Card}(T_+^{-1}(\by))=2$. By continuity, $\operatorname{Card}(T_+^{-1}(\cdot))=2$ then holds in a small neighborhood of $\by$.

  As $T_+$ maps sets of measure $0$ to sets of measure $0$, the considerations above show that there exist two subsets of $\mathcal{U}$, both of positive measure, on which $\operatorname{Card}(T_+^{-1}(T_+(\cdot)))$ attains different values. Note that this conclusion holds for any $c>0$ and therefore for very small movements of the rotation axis $\bn(t)$. Taking into account the non-triviality of determining the regions of $\mathcal{U}$ with different Banach indicatrix, this example hints to the complexity of applying the backpropagation formula \autoref{eq:reconstruction} for general rotations.
\end{example}

\section{Discrete backpropagation and inverse NDFT} \label{sec:disc}
Next, we are interested in the numerical reconstruction of the three-dimensional function $f$
from given two-dimensional functions $u_t$.
By \autoref{eq:recon_n}, we can build on the relation
$$
-\sqrt{\frac{2}{\pi}} \e^{-\i\kappa r_M} \kappa \i \ktran_{1,2} u_{t}(k_1,k_2,\pm r_M)
=\ktran f(T_\pm(k_1,k_2,t)), \quad (k_1,k_2,t) \in \mathcal U.
$$
In this paper, we work with simulated data.
We assume in the numerical part 
that the function $\mathcal F f$ is known on $\mathcal Y_\pm = T_\pm(\mathcal U)$.
More precisely, $\mathcal F f$ is either given analytically or
we approximate it by the computation described at the end of this section. 

For $N \in 2 \mathbb N$, let $\mathcal I_N \coloneqq \{-\frac{N}{2} + j: j=0,\ldots,N-1\}$.
We aim to reconstruct the function~$f$ with $\supp f\subset\mathcal{B}_{\rs}\subset\R^3$
at the equispaced $N \times N \times N$ grid 
$$
\mathcal R_N\coloneqq   \frac{2\rs}{N} \, \mathcal I_N^3 \subset [-\rs,\rs]^3.
$$
To this end, we assume that $\mathcal Ff$ is given on the following sampling set in the k-space:
let
\begin{equation} \label{eq:J}
	\mathcal U_{N,S} \coloneqq
	\left\{ (k_1,k_2,t) : (k_1,k_2)\in {2k_0}\mathcal I_N^2,\, k_1^2+k_2^2\le k_0^2,\, t = \frac{2\pi j}{S}, \, j=0,\ldots,S-1\right\}.
\end{equation}
In particular, for
$S \coloneqq \lceil\frac{4}{\pi}N\rceil$, 
the number of data points $M \coloneqq |\mathcal U_{N,\lceil\frac{4}{\pi}N\rceil}|$ 
is approximately equal to $N^3$. In this case, we 
use the abbreviation 
$\mathcal U_N\coloneqq\mathcal U_{N,\lceil\frac{4}{\pi}N\rceil}$.
Then the sampling points in the k-space are given by the $M$ points in 
\begin{equation} \label{eq:Y_N}
\mathcal Y_{N}^\pm \coloneqq T_\pm(\mathcal U_{N}).
\end{equation}

In the following, we propose two reconstruction techniques,
namely the discrete backpropagation and the inverse \emph{nonequispaced discrete Fourier transform} (NDFT).
Our numerical tests will indicate that the latter appears to be preferable.
%
\subsection{Discrete backpropagation}
The discrete backpropagation is directly based on a discretization of the integral in
\autoref{eq:approx2g} using the values of $\mathcal F f$ on $\mathcal Y_{N}^\pm$.
For $\br \in \mathcal R_N$, we approximate $f_{\mathrm{bp}}(\br)$ by
\begin{equation}	\label{eq:bp}
\mathbf f_{\mathrm{bp}}(\br)\coloneqq
(2\pi)^{-\frac32}	
\frac{ \pi L k_0^2}{N^3} 
\sum_{(k_1,k_2,t)\in \mathcal U_N} \ktran f(T_\pm(k_1,k_2,t))\, 
\mathrm e^{\i\br\cdot T_\pm(k_1,k_2,t)}\, \frac{\abs{\nabla T_\pm(k_1,k_2,t)}}{{\operatorname{Card}(T_\pm^{-1}(T_\pm(k_1,k_2,t)))}}.
\end{equation}
We will see that evaluating $\mathbf f_{\mathrm{bp}}$ on $\mathcal R_N$
is, up to the multiplicative constant in front of the sum, an adjoint NDFT applied to
$$\ktran f(T_\pm(k_1,k_2,t)) \frac{\abs{\nabla T_\pm(k_1,k_2,t)}} {\operatorname{Card}(T_\pm^{-1}(T_\pm(k_1,k_2,t)))}
,\quad (k_1,k_2,t) \in \mathcal U_N.$$

\begin{remark}
For the discrete backpropagation \autoref{eq:bp}, 
it is crucial to know the Banach indicatrix $\operatorname{Card}(T_\pm^{-1}(T_\pm(\cdot)))$,
which we computed in some special cases, see Examples \ref{ex1} and \ref{ex:half-rotation}.
However, even for a small movement of the rotation axis as in \autoref{ex:mov-axis}, it seems to be quite difficult to determine the Banach indicatrix in general.
Such considerations are not necessary when applying the inverse NDFT considered next.
\end{remark}

\subsection{Inverse NDFT}
To explain the inverse NDFT, we recall the NDFT first.
The NDFT is the linear operator 
$\mathbf F_N: \R^{N^3} \rightarrow \R^M$ 
defined for our vectors 
$\mathbf f_N  
\coloneqq
\left( f ( \br ) \right)_{\mathbf r \in \mathcal R_N}
=
\left( f \left(\frac{2\rs}{N} {\mathbf j} \right) \right)_{\mathbf j \in \mathcal I_N^3}$ 
elementwise by
\begin{equation}\label{eq:ndft}
	\mathbf F_N \mathbf f_N (\by) 
	\coloneqq \frac{8\rs^3}{N^3} \sum_{\br \in \mathcal R_N} f(\br) \e^{\i \br \cdot \by}  
	= \frac{8\rs^3}{N^3} \sum_{\mathbf j \in \mathcal I_N^3} f \left(\frac{2\rs}{N} {\mathbf j} \right) \e^{\i \frac{2\rs}{N}  \mathbf j \cdot \by }
	, \quad \by \in \mathcal Y_{N}^\pm,
\end{equation}
see \cite[Section 7.1]{PlPoStTa18}. In other words, we can consider 
the NDFT via the tensor 
$\mathbf F_N = \left(\e^{\i \frac{2\rs}{N}  \mathbf j \cdot \by} \right)_{\mathbf j \in \mathcal I_N^3, \by \in \mathcal Y_N^\pm}$.
Furthermore, it provides an approximation of the Fourier transform 
$$\ktran f(\by) \approx \mathbf F_N \mathbf f_N(\by), \qquad \by\in\mathcal Y_N^\pm.$$
Then, as already mentioned above, the discrete backpropagation formula \autoref{eq:bp}
is just the application of the adjoint NDFT to weighted values
$\left( \mathcal F f(\by) \right)_{\by \in \mathcal Y_N^\pm}$.
In contrast, the inverse NDFT reconstructs  the values $\left( f ( \br ) \right)_{\mathbf r \in \mathcal R_N}$
by solving the least squares problem
\begin{equation} \label{eq:indft}
\mathrm{argmin}_{\mathbf f \in \mathbb R^{N^3}} \| \mathbf F_N \mathbf f - (\mathcal F f(\by))_{\by \in \mathcal Y _N^\pm}\|_2^2.
\end{equation}
More precisely, we call a solution of this problem \emph{inverse NDFT} of $ (\mathcal F f(\by))_{\by \in \mathcal Y _N^\pm}$, 
see  \cite[Section 7.6.2]{PlPoStTa18}.

\begin{remark}[Fast Computation by NFFT]
Computationally we will solve the least squares problem \autoref{eq:indft} by a 
\emph{conjugate gradient method on the normal equations} (CGNE)
as proposed in \cite{kupo04}.
Each iteration step of the CGNE algorithm requires the computation of an NDFT and an adjoint NDFT.
Both the computation of the NDFT and its adjoint can be realized in an efficient way by
the so-called \emph{nonequispaced fast Fourier transform} (NFFT). 
The NFFT requires only $\mathcal O(N^3 \log N)$ arithmetic operations instead of $\mathcal O(N^6)$ operations for the NDFT
and is highly recommendable in 3D.
Depending on the choice of the inner parameters, the NFFT provides an arbitrarily tight approximation of the NDFT, see, e.g., \cite{bey95,duro93,st97}.
\end{remark}

\textbf{Computation of k-space data.}
If the function $\mathcal F f$ or its values at $\mathcal Y_N^\pm$ are not given, we have to synthesize them
for our numerical tests.
Since the function $f$ is known in our synthetic examples and can therefore be sampled on an arbitrary fine grid,
we choose $n \gg N$ and approximate the values $\mathcal F f(\by)$ 
by applying the
NDFT on the fine grid, i.e., we take
\begin{equation} \label{simu}
	\mathcal F f(\by) \approx
	\frac{8\rs^3}{n^3}
	\sum_{\br \in \mathcal R_n} f(\br) \e^{\i \br \cdot \by}  
	= \frac{8\rs^3}{n^3} \sum_{\mathbf j \in \mathcal I_n^3} f \left(\frac{2\rs}{n} {\mathbf j} \right) \e^{\i \frac{2\rs}{n}  \mathbf j \cdot \by }
	, \quad \by \in \mathcal Y_{N}
\end{equation}
as given values for both the discrete backpropagation and the inverse NDFT.
We note that \autoref{simu} resembles $\mathbf F_{n}$ evaluated on a different grid than in \autoref{eq:ndft}.

\section{Numerical tests} \label{sec:num_results}
In this section, we demonstrate the performance of the discrete backpropagation in \autoref{eq:bp} and the inverse NDFT defined in \autoref{eq:indft} by numerical examples.
For computing the NDFT and its adjoint, we apply
the NFFT software library~\cite{nfft3}. If not stated otherwise, we use the NFFT for all reconstructions.
We fixed the number of CGNE iteration steps in the inverse NDFT to 20.
In this section, we concentrate on the transmission imaging associated with $T_+$.

\subsection{Sampling}

The wavelength $\lambda$ is related with the wave number via $k_0 = \frac{2\pi}{\lambda}$.
Since the discretized Fourier transform $\mathbf{F}_Nf(\by)$ is $\frac{\pi N}{\rs}$-periodic in $\by$
and the data points $\by\in\mathcal Y_N^+$ satisfy $\norm{\by}\le\sqrt2k_0$,
it is reasonable that the model parameters satisfy $\sqrt2k_0 \le \frac{\pi N}{2\rs}$ or, equivalently, 
$$N\ge\frac{2\sqrt2 k_0\rs}{\pi} = \frac{4\sqrt2 \rs}{\lambda}.$$
In particular, we choose $N=\frac{2\sqrt2 k_0\rs}{\pi}$, where we note that $\rs$ can always 
be made larger in order to make sure that $N$ is an integer.
The distance between adjacent grid points $\br\in\mathcal R_N$ is then $\frac{2\rs}{N}=\frac{\lambda}{2\sqrt{2}}$, which only depends on the wavelength $\lambda$.
In our numerical tests, we fix the wavelength $\lambda=1$ such that all measurements in $\br$ are in multiples of the wavelength.
Hence, the wave number is $k_0=2\pi$.
The data points $T_+(k_1,k_2,t)$ in the k-space are on the grid $\mathcal Y_{N}^+$ described in the \autoref{eq:Y_N}.
Therefore all data points in $\mathcal Y_{N}^+$ are contained in a ball of radius $\sqrt2 k_0 \approx 8.89$.

In our first numerical tests, we choose
the grid size
$N=80$ 
which corresponds to the radius $\rs= \frac{\lambda N}{4\sqrt2}\approx 14.1\lambda$ of the maximal support of $f$.
Then we have $N^3=512\,000$ grid points in $\mathcal R_N$ and 496\,944 data points in $\mathcal Y_N^+$.
If not available analytically, we simulated the values $\mathcal F f(\by)$, $\by \in \mathcal Y_N^+$,
by an NDFT of length $n = 5N$ as in \autoref{simu}. 

We compare the reconstruction quality based on the the \emph{structural similarity index measure} (SSIM) \cite{ssim}
and the
\emph{peak signal-to-noise ratio} (PSNR) determined by
$$
\operatorname{PSNR}(\mathbf f,\mathbf g)
\coloneqq 10 \log_{10} \frac{\max_{\br\in\mathcal R_N} \abs{\mathbf f(\br)}^2}{N^{-3} \sum_{\br\in\mathcal R_N} \abs{\mathbf f(\br) - \mathbf g(\br)}^2},
$$
where $\mathbf f$ is the ground truth and $\mathbf g$ is the reconstructed value. 
Note that higher values indicate a better reconstruction quality for both.
If $\mathcal F f(\by)$, $\by \in \mathcal Y_N^+$ is computed from the function values at a fine grid,
then it appears reasonable to take as ground truth for the PSNR the voxel values which are the averages 
of their five-point neighborhood on the fine grid
$$
\mathbf{f}^{\mathrm{av}}_N(\br)
\coloneqq
\frac{1}{5^3}
\sum_{\mathbf j\in\{-2,-1,\dots,2\}^3}  f \left(\br+ \frac{2\rs}{5N}\mathbf j\right) 
,\quad \br\in \mathcal R_N.
$$

\subsection{Function with exactly known Fourier transform}
In order to illustrate the effectiveness of the proposed reconstruction algorithms, 
we make the first test with the characteristic function
$f_a(\br) = \mathbf 1_{\mathcal{B}_{a}}(\br)$ of the ball with radius $a>0$.
Its Fourier transform is known analytically,
\begin{equation} \label{eq:Fcirc}
\ktran f_a(\by) = \sqrt{\frac2\pi}\frac1{\norm{\by}^3} \left( \sin\norm{a\by} - \norm{a\by}\cos\norm{a\by}\right),
\qquad\by\in\R^3,
\end{equation}
see \cite[p. 183]{PlPoStTa18}.
The object is fully rotated around the axis $\bn(t)=\be_1$ and angle $\alpha(t)=t$, $t\in[0,2\pi]$, as in \autoref{ex1}.
We consider the two cases that the input for our reconstruction arises from
i) the exact data $\ktran f_a (\by)$ by \autoref{eq:Fcirc}, 
and 
ii) the approximate data $\mathbf{F}_{5N} f_a$ by \autoref{simu}.
The error between the approximation $\mathbf{F}_{N}f_a(\by)$ and the true values $\ktran f_a (\by)$, $\by\in\mathcal Y_N^+$, is shown for different $N$ 
in \autoref{tab:err-forward}.

\begin{table}\centering
\begin{tabular}{lcccccc}
	\hline
	$N$ & 80 & 160 & 240 & 320 & 400 & 800
	\\\hline
	RMSE & 3.72\,E-2 & 9.37\,E-3 & 4.38\,E-3 & 2.58\,E-3 & 1.67\,E-3 & 4.46\,E-4
\end{tabular}
\caption{\label{tab:err-forward}
	The root mean square error (RMSE) of $\mathbf{F}_Nf_a$ with respect to $\ktran f_a$ in dependence on the grid size $N$ for the NDFT.}
\end{table}

The reconstruction is depicted in \autoref{fig:rec-exact}.
For both cases, we compare the inverse NDFT with the discrete backpropagation \autoref{eq:bp}. 
The latter shows stronger artifacts due to the sharp cutoff in the k-space.
There is almost no difference between the exact data in i) and approximate ones in ii);
since the approximate data is computed on a very fine grid.
\begin{figure}
	\begin{subfigure}[t]{.32\textwidth} 
		\includegraphics[width=\textwidth,trim={44 30 36 23},clip]{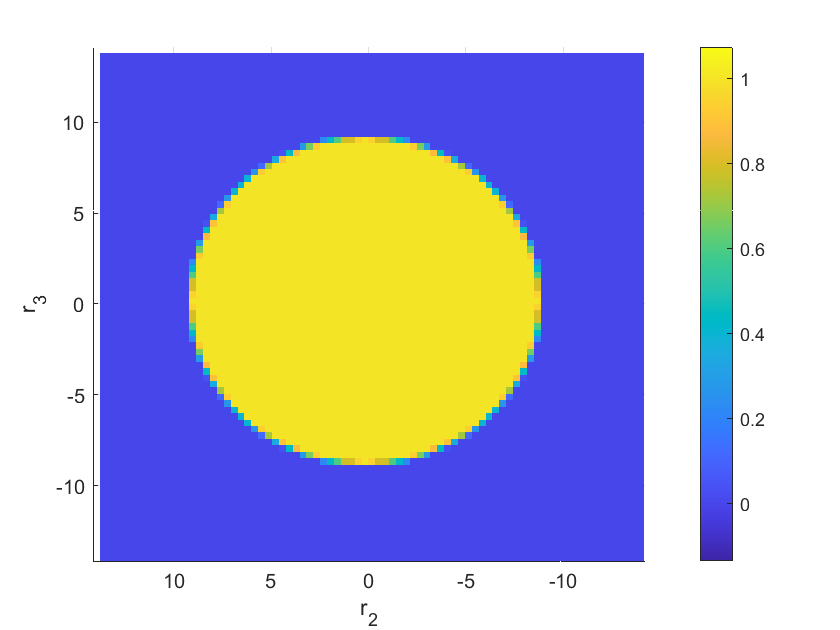}
		\caption{Test function discretized by $\mathbf f_{80}^{\mathrm{av}}$ on an $80\times80\times80$ grid}
	\end{subfigure}\hfill
	\begin{subfigure}[t]{.32\textwidth}
		\includegraphics[width=\textwidth,trim={44 30 36 23},clip]{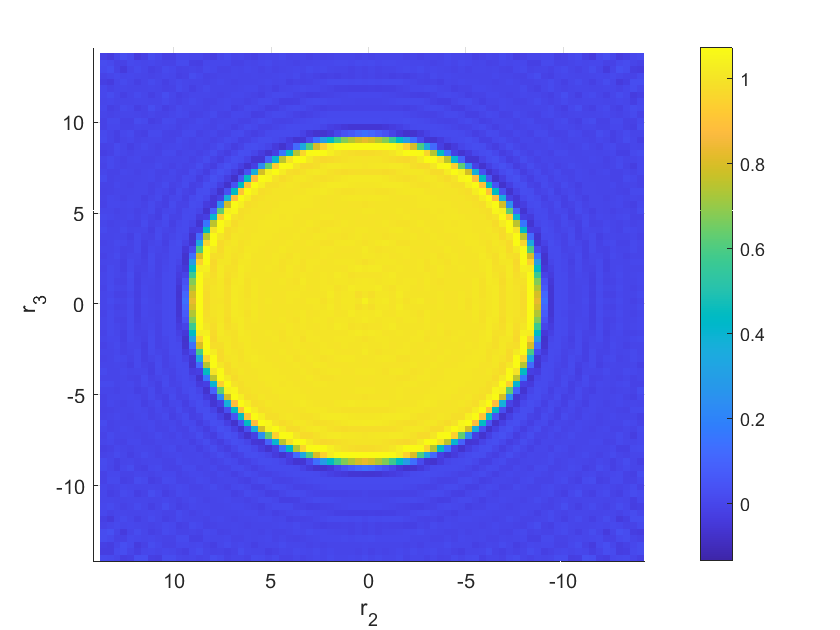}
		\caption{Inverse NDFT from approximate data
		\\     PSNR 32.61, SSIM 0.885}
	\end{subfigure}\hfill
	\begin{subfigure}[t]{.32\textwidth}
		\includegraphics[width=\textwidth,trim={44 30 36 23},clip]{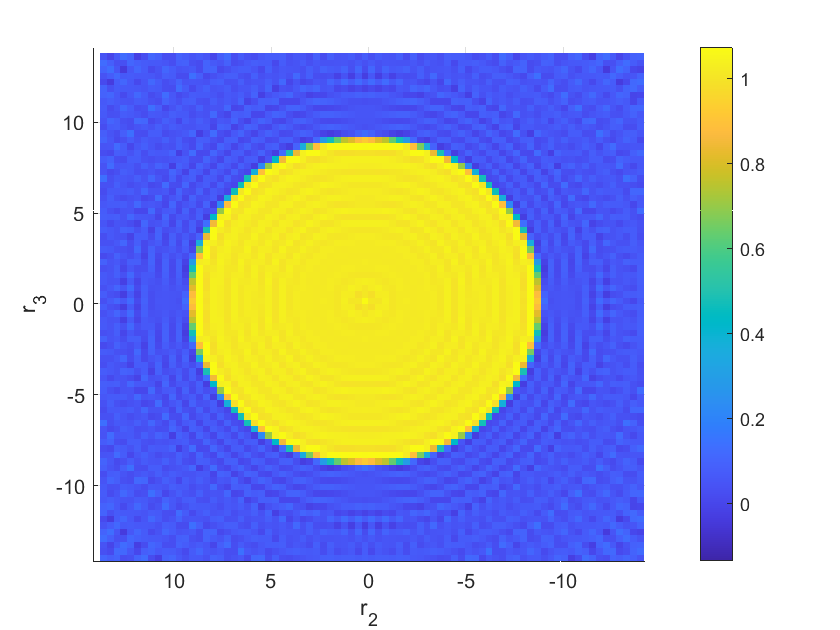}
		\caption{Backpropagation from approximate data
			\\     PSNR 27.02, SSIM 0.370}
	\end{subfigure}\\[\medskipamount]
\centering
	\begin{subfigure}[t]{.32\textwidth}
		\includegraphics[width=\textwidth,trim={44 30 36 23},clip]{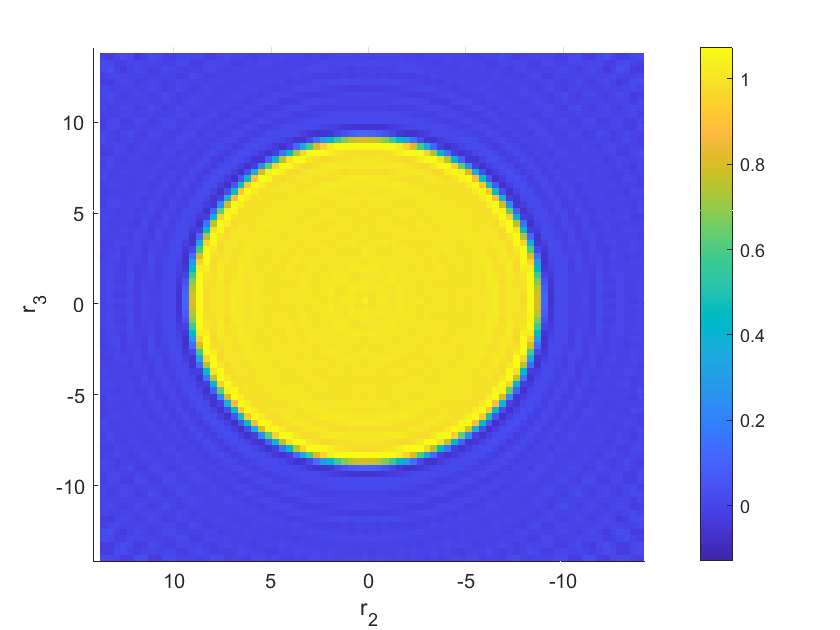}
	\caption{Inverse NDFT from exact data
	\\    PSNR 32.60, SSIM 0.885}
	\end{subfigure}~~
	\begin{subfigure}[t]{.32\textwidth}
		\includegraphics[width=\textwidth,trim={44 30 36 23},clip]{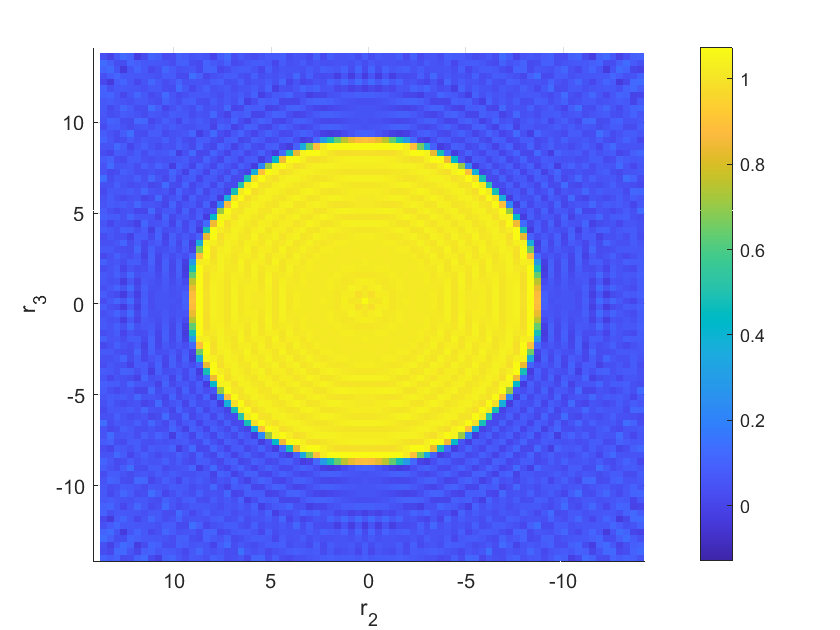}
	\caption{Backpropagation from exact data
	\\      PSNR 27.00, SSIM 0.370}
	\end{subfigure}
	\caption{Test with the characteristic function $f_a$ of a ball of radius $a=9$, with resolution $N=80$. 
		All plots show the slice of the 3D function at $r_1=-0.3$.
		The reconstructions use either the exact data $\ktran f_a$, or the approximated data from \autoref{simu}.
		In both cases, we use the same set of points $\mathcal Y_N^+$.
		\label{fig:rec-exact}}
\end{figure}

\subsection{Simple test function}
As a second test function $f$, we take the characteristic function of a ball and cut out a small segment around the plane $\{r_2=0\}$.
The rotation is around the $r_1$ axis
as in the previous test. The reconstruction results are shown in \autoref{fig:rec}, where the inverse NDFT yields an image 
with fewer artifacts than the discrete backpropagation.

\begin{figure}
  \centering
  \begin{subfigure}[t]{.32\textwidth} 
  	\includegraphics[width=\textwidth,trim={44 30 36 23},clip]{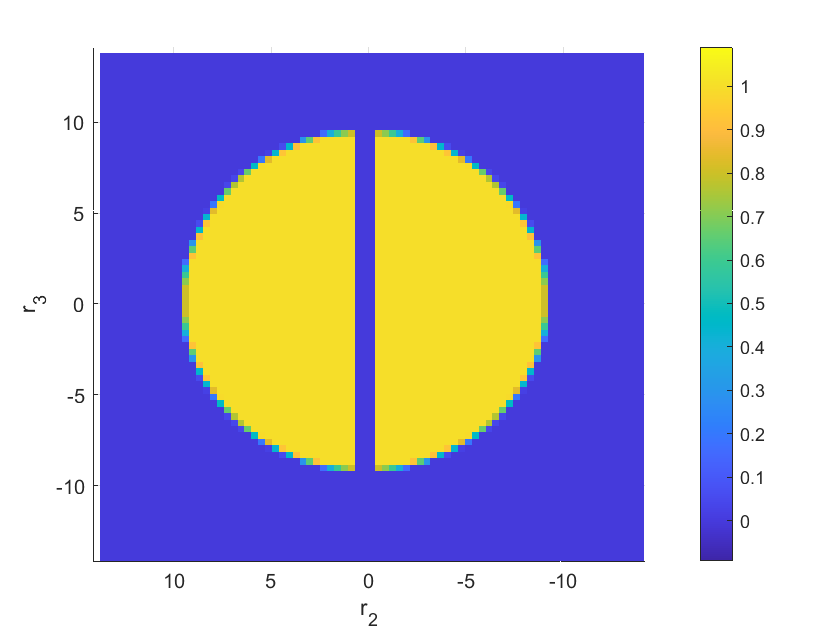}
  	\caption{Test function (ground truth image) on an $80\times80\times80$ grid}
  \end{subfigure}\hfill
	\begin{subfigure}[t]{.32\textwidth} 
		\includegraphics[width=\textwidth,trim={44 30 36 23},clip]{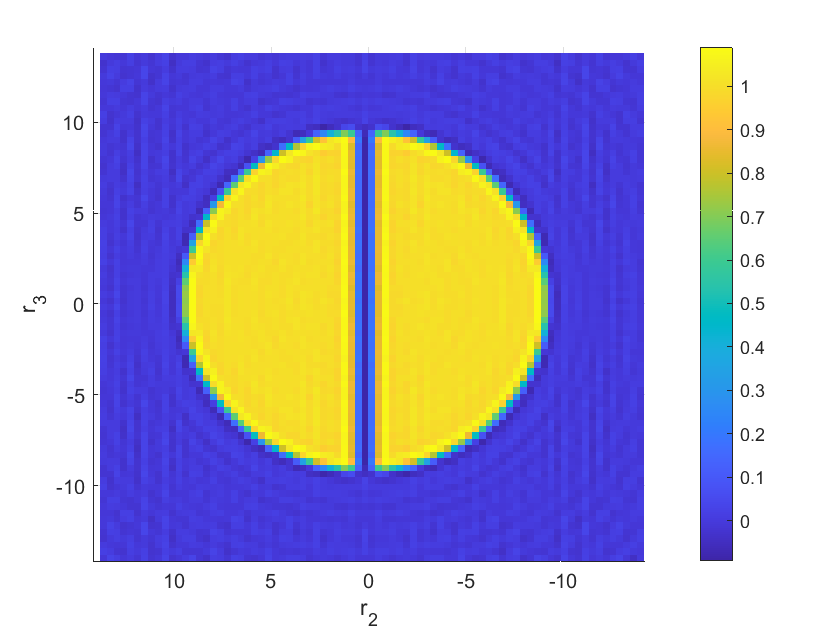}
		\caption{Inverse NDFT reconstruction
			\\     PSNR 29.52, SSIM 0.863}
	\end{subfigure}\hfill
	\begin{subfigure}[t]{.32\textwidth}
		\includegraphics[width=\textwidth,trim={44 30 36 23},clip]{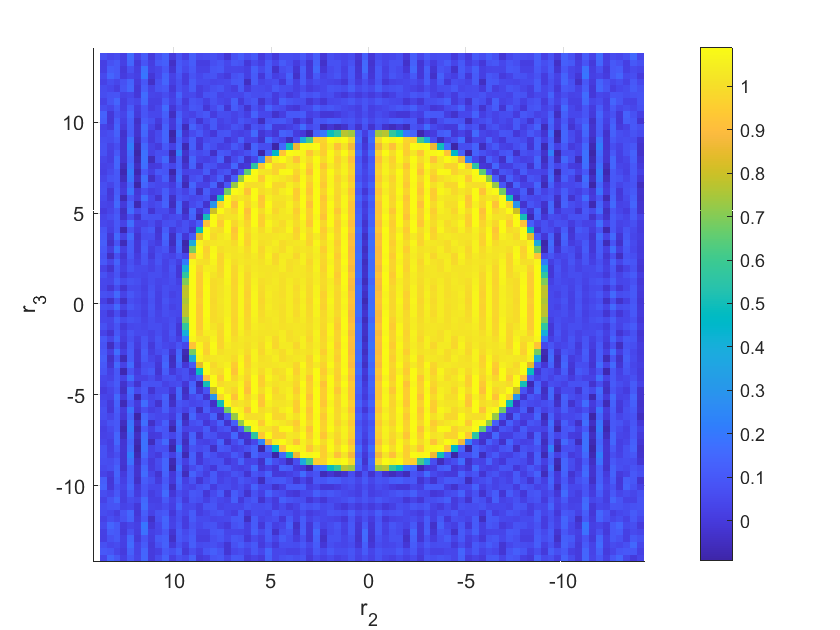}
		\caption{Backpropagation
			\\     PSNR 25.25, SSIM 0.366}
	\end{subfigure}\\[\bigskipamount]
	\begin{subfigure}[t]{.9\textwidth}
  \begin{center}
  \begin{tikzpicture}
    \begin{axis}[xlabel=$r_1$, width=.82\textwidth, height=0.42\textwidth, xmin=-14.1421, xmax=14.1421, enlarge y limits=0.05, axis x line*=bottom, axis y line=none, legend pos=outer north east, legend style={cells={anchor=west}}]
      \addplot+[thick,const plot mark mid,mark=none,black] table[x index=0,y index=1] {images/f1.dat};
      \addplot+[thick,const plot mark mid,mark=none,blue] table[x index=0,y index=2] {images/f1.dat};
      \addplot+[thick,const plot mark mid,mark=none,red,dotted,thick] table[x index=0,y index=3] {images/f1.dat};
      \legend{\small ground truth $f$,\small inverse NDFT,\small backpropagation};
    \end{axis}
  \end{tikzpicture}
  \end{center}
  \caption{Comparison of both reconstructions and the ground truth at the line $(r_1,0,0)^\tT$}
  \end{subfigure}
	\caption{\label{fig:rec}
    Top: Slice plot of the test function with discontinuity around the plane $r_2=0$, and its reconstructions.
    Bottom: Sectional plot of test function and its reconstructions.}
\end{figure}

The reconstruction becomes more difficult when the object has discontinuities perpendicular to the rotation axis.
To this end, we take the previous test function, but the rotation is now around the $r_2$ axis.
The reconstruction in \autoref{fig:rec2}
resolves the gap considerably worse than in the previous example.
This is because we do not have any data of the Fourier transform $\ktran f$ near the $k_2$ axis away from the origin.
However, the information about the Fourier transform in this region is important 
due to the singularities of $f$ along planes perpendicular to the $r_2$ axis.
Again, the discrete backpropagation produces more artifacts than the inverse NDFT.

\begin{figure}
		\begin{subfigure}[t]{.48\textwidth}
		\includegraphics[width=\textwidth,trim={44 30 36 23},clip]{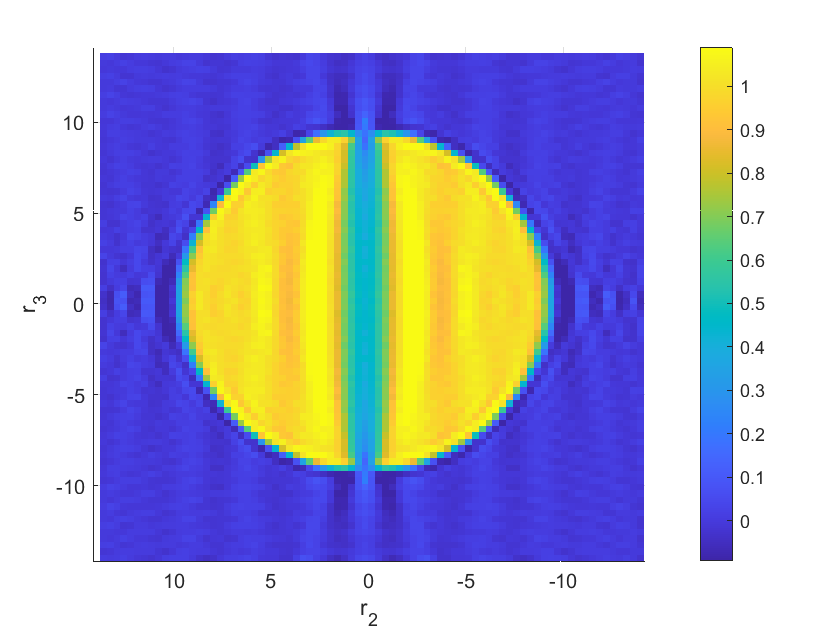}
		\caption{Inverse NDFT reconstruction
			\\     PSNR 22.94, SSIM 0.658}
	\end{subfigure}\hfill
		\begin{subfigure}[t]{.48\textwidth}
		\includegraphics[width=\textwidth,trim={44 30 36 23},clip]{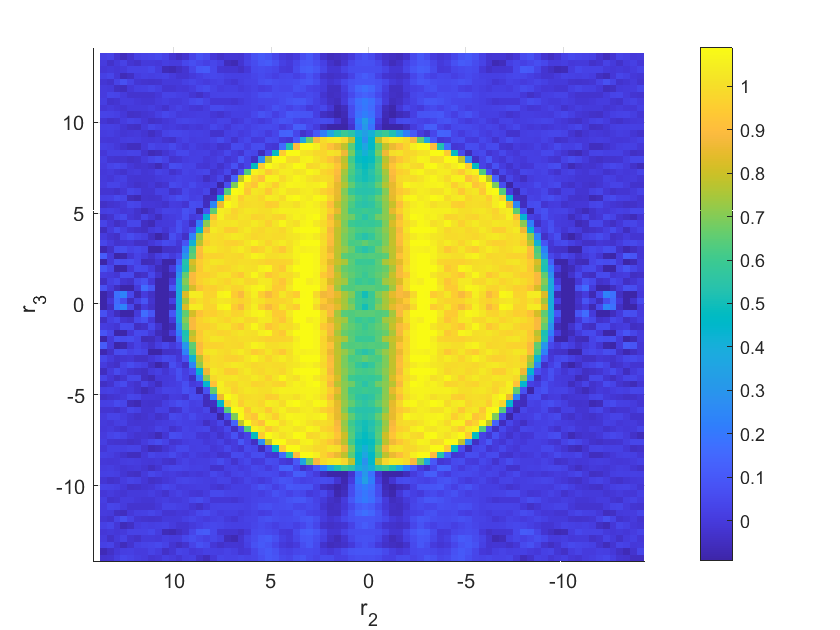}
		\caption{Backpropagation
			\\     PSNR 21.04, SSIM 0.341}
	\end{subfigure}
	\caption{\label{fig:rec2}
    The same setting and test function as in \autoref{fig:rec}, but this time the object is rotated around $\be_2$ instead of $\be_1$.
		}
\end{figure}

\subsection{Perturbed rotation}
In practical applications, the rotation of the object is often a little perturbed.
For the following test, we consider
the rotation angle $\alpha(t)=t$ and the slightly moving rotation axis
$$
\bn(t)
= \left(\cos\left(\frac\pi8 \sin t\right), \sin\left(\frac\pi8 \sin t\right), 0 \right)^\tT
,$$
see \autoref{ex:mov-axis}.
This axis $\bn(t)$ is in the $r_1 r_2$ plane and it moves around $\be_1$.
The reconstruction results are depicted in \autoref{fig:rec3},
where we chose the resolution $N=160$ and thus $\rs\approx28.3$.
In these tests, the perturbation slightly improves the quality of the reconstructions, because the set of missing k-space data is different.
We note that we applied the discrete backpropagation \autoref{eq:bp} with Banach indicatrix $\operatorname{Card}(T_+^{-1})\equiv2$, even though it is 4 in a small region.
This still results in a good approximation.

\begin{figure}\centering
\begin{subfigure}[t]{.49\textwidth}
	\includegraphics[width=\textwidth,trim={44 30 36 23},clip]{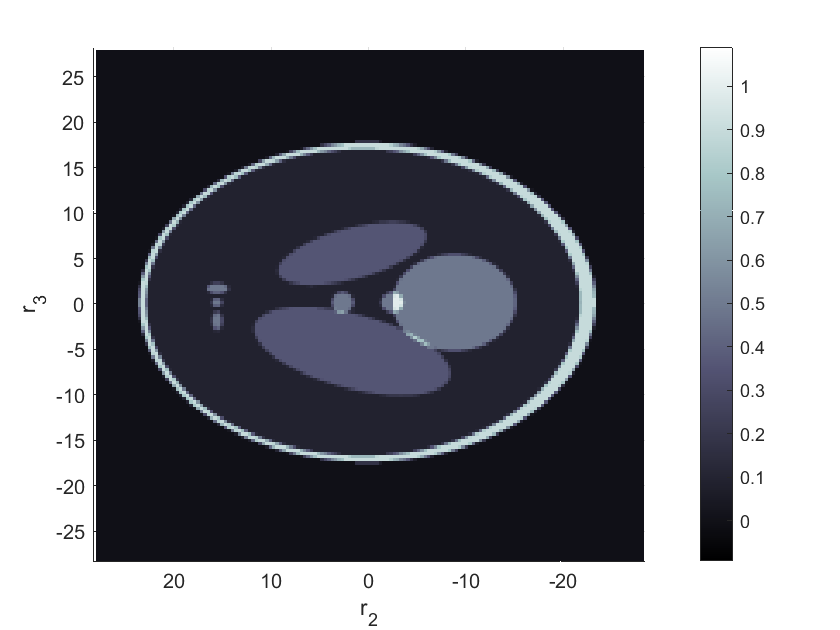}
	\caption{Phantom $\mathbf f_{160}^{\mathrm{av}}$ on a $160\times160\times160$ grid}
\end{subfigure}\\[\medskipamount]
\begin{subfigure}[t]{.49\textwidth}
	\includegraphics[width=\textwidth,trim={44 30 36 23},clip]{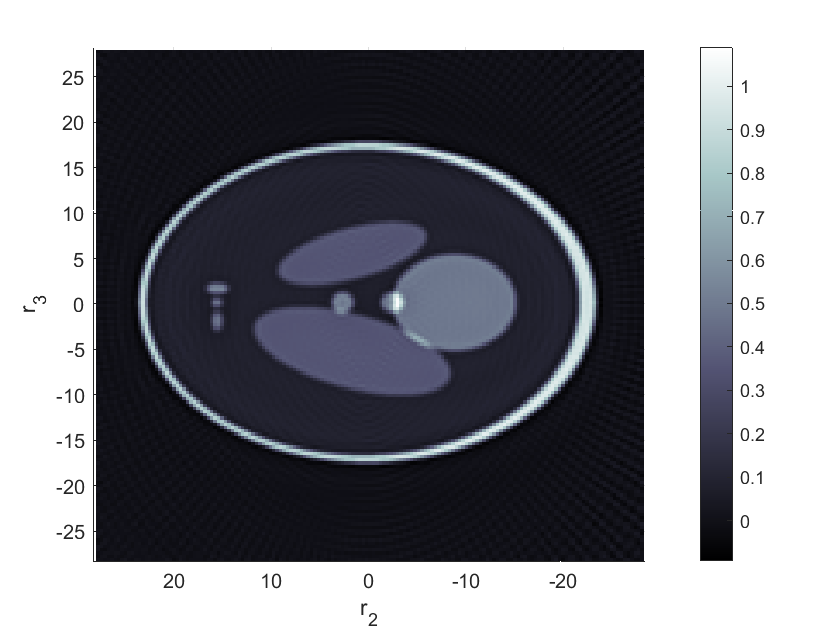}
	\caption{Inverse NDFT reconstruction with fixed rotation axis
		\\    PSNR 32.56, SSIM 0.892}
\end{subfigure}\hfill
\begin{subfigure}[t]{.49\textwidth}
\includegraphics[width=\textwidth,trim={44 30 36 23},clip]{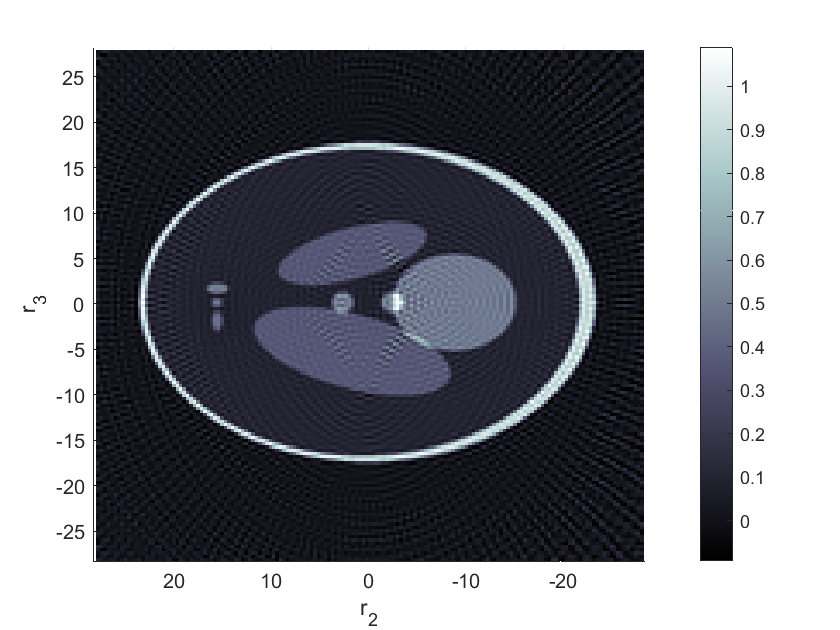}
\caption{Backpropagation with fixed rotation axis
	\\    PSNR 27.81, SSIM 0.422}
\end{subfigure}\\[\medskipamount]
\begin{subfigure}[t]{.49\textwidth}
\includegraphics[width=\textwidth,trim={44 30 36 23},clip]{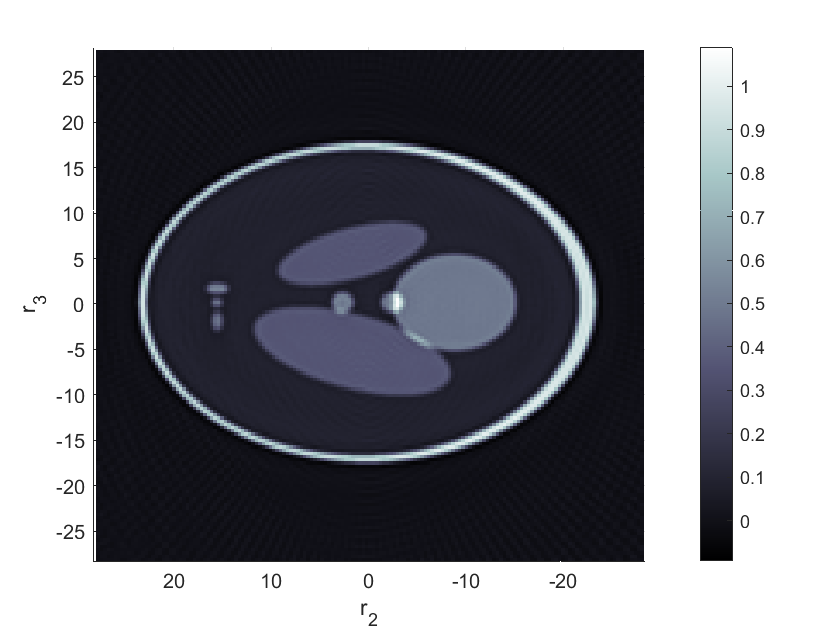}
\caption{Inverse NDFT reconstruction with slightly moving axis
	\\     PSNR 33.62, SSIM 0.934}
\end{subfigure}\hfill
\begin{subfigure}[t]{.49\textwidth}
\includegraphics[width=\textwidth,trim={44 30 36 23},clip]{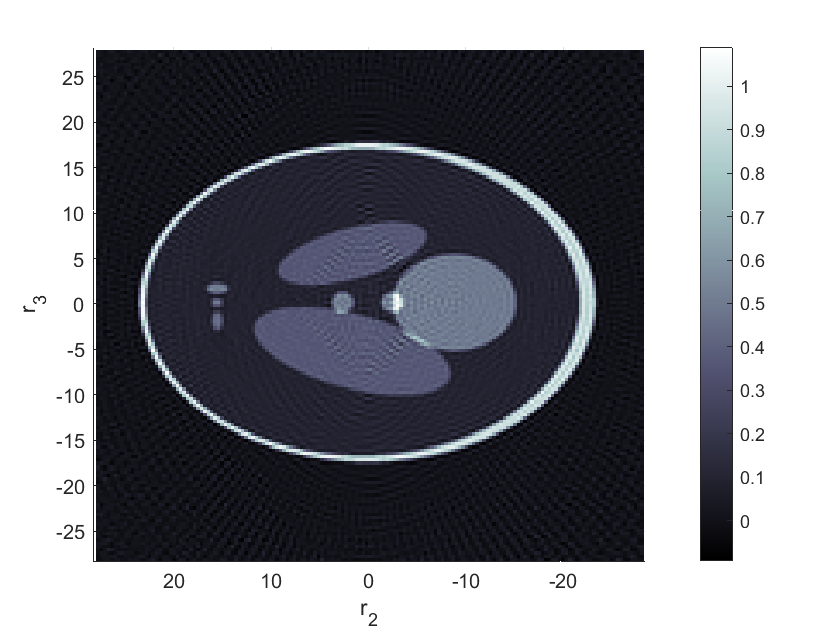}
\caption{Backpropagation with slightly moving axis
	\\     PSNR 30.92, SSIM 0.549}
\end{subfigure}
\caption{\label{fig:rec3}
	A Shepp--Logan phantom as test function (A) with the bigger resolution $N=160$.
	Reconstruction for the constant rotation axis $\bn(t)=\be_1$ with the inverse NDFT (B) and with the backpropagation (C).
	With the slightly moving rotation axis $\bn(t)$, the inverse NDFT (D) again produces a clearer image than the backpropagation~(E).
}
\end{figure}

In case of a half rotation around the fixed axis $\be_1$, we have an explicit formula of the Banach indicatrix, see \autoref{ex:half-rotation}.
The reconstruction shown in \autoref{fig:half-rotation} behaves slightly worse than before, due to the missing data.

\begin{figure}\centering
\begin{subfigure}[t]{.49\textwidth}
  \includegraphics[width=\textwidth,trim={44 30 36 23},clip]{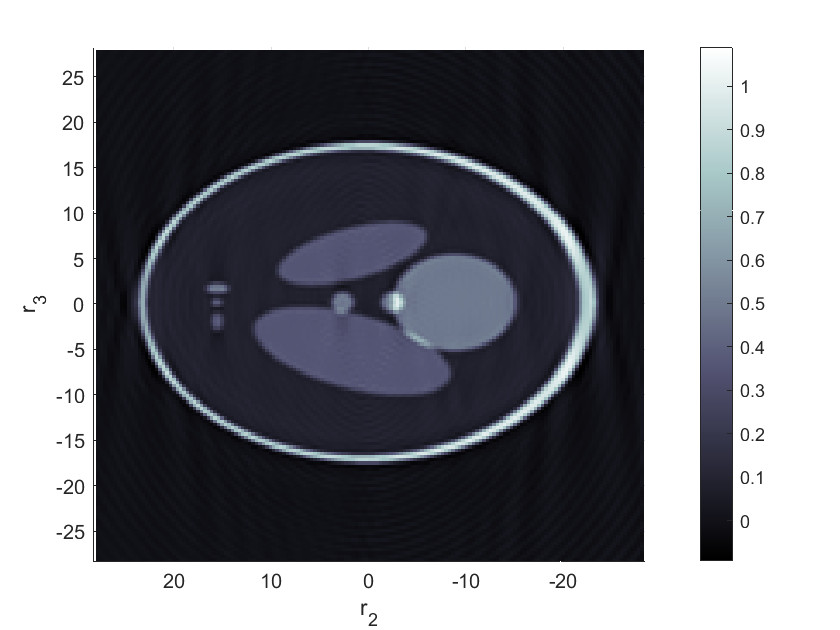}
  \caption{Inverse NDFT 
    \\     PSNR 30.80, SSIM 0.816}
\end{subfigure}\hfill
\begin{subfigure}[t]{.49\textwidth}
  \includegraphics[width=\textwidth,trim={44 30 36 23},clip]{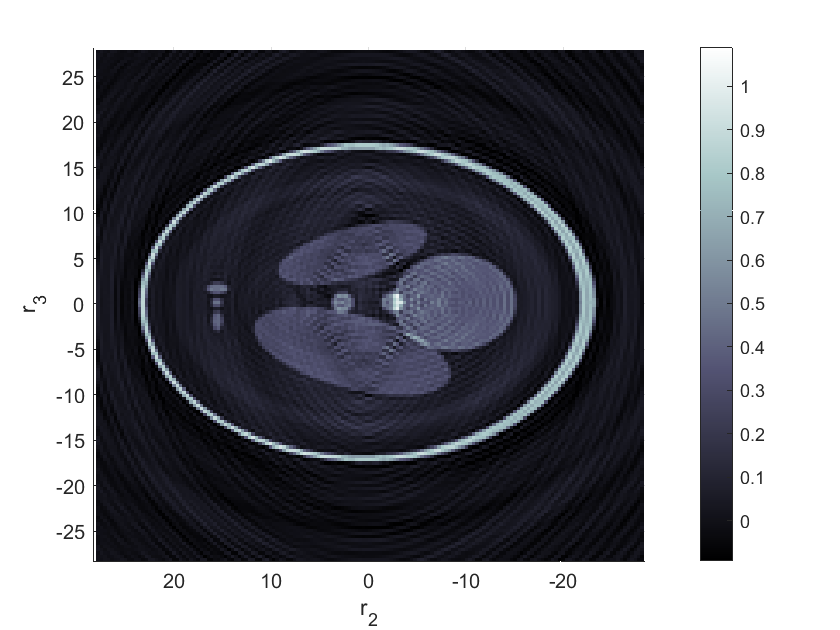}
  \caption{Backpropagation 
    \\     PSNR 26.01, SSIM 0.324}
\end{subfigure}
\caption{\label{fig:half-rotation}
  Reconstruction for a half rotation around $\be_1$, all other parameters are the same as in \autoref{fig:rec3}.
}
\end{figure}

\begin{remark}[Computation time]
The discrete backpropagation in \autoref{eq:bp} consists of one adjoint NDFT, whereas each iteration step of the CGNE method of the inverse NDFT requires about twice the computational effort: to perform both an NDFT and an adjoint NDFT.
Hence, the backpropagation algorithm is considerably faster than the inverse NDFT. 
In practice, this difference is a little smaller, since the NFFT software includes a node-dependent precomputation, which has to be done only once.
For the grid size of $N=160$, which corresponds to approximately 4 million grid points, the discrete backpropagation takes about 9.6 seconds, whereas 20 iteration steps of the CGNE algorithm take 46 seconds on an Intel Core i7-10700 CPU.
\end{remark}

\subsection{Noisy measurements}

In practical applications, the measurements are corrupted by noise.
For some noise level $\delta>0$, we consider the Gaussian white noise model
\begin{equation*}
  \mathbf g^\delta (\by)
  \coloneqq \ktran f(\by) + \delta\, \mathcal N(0,1)
  ,\quad \by\in\mathcal Y_N^+.
\end{equation*}
The CGNE method used for the inverse NDFT is a regularizer with the number of iterations as the regularization parameter.
Let us denote by $\mathbf f_k^\delta$ the $k$-th iterate of the CGNE method applied to $\mathbf g^\delta$.
The discrepancy principle \cite[Section 7.3]{EnHaNe96} states to choose $k$ such that the residual $\norm{\mathbf F_N \mathbf f_k^\delta - \mathbf g^\delta}$ is approximately $\delta$.
A popular parameter choice rule requiring no knowledge of the noise level $\delta$ is the L-curve method \cite{Han92}, where one chooses $k$ at the corner in the log-log plot of the residual $\norm{\mathbf F_N \mathbf f_k^\delta - \mathbf g^\delta}$ versus the norm of the approximate solution $\norm{\mathbf f_k^\delta }$.
As discrete norm, we take the root mean square 
$$ 
\norm{\mathbf g^\delta}
\coloneqq 
\left( \frac{1}{\abs{\smash{\mathcal Y_N^+}}} 
\sum_{\by\in\mathcal Y_N^+} \abs{\mathbf g^\delta(\by)}^2\right)^{1/2}.
$$
For an overview of parameter choice rules in the conjugate gradient method, we refer to \cite{HaPa07}.

In \autoref{tab:rec-noise}, we show the reconstruction error with different parameter choice rules.
We use the same test function as in \autoref{fig:rec3}, the resolution $N=160$, and the rotation around the $r_1$ axis. 
We compute the corner of the L-curve according to \cite{Han07}.
The L-curve method tends to overestimate the stopping index $k$,
whereas the discrepancy principle produces slightly better reconstructions.

\begin{table}
  \centering
  \begin{tabular}{c|c|c|c|c}
    Noise level $\frac{\delta}{\max\abs{\ktran f}}$
    & Best choice
    & L-curve
    & Discrepancy
    & Backpropagation
    \\\hline
    0.0 \%
    & 33.01 (100)
    & --
    & --
    & 27.81
    \\
    0.1 \%
    & 30.98 ~(18)
    & 27.80 (65)
    & 30.16 (13)
    & 23.25
    \\
    0.2 \%
    & 28.84 ~(13)
    & 25.89 (29)
    & 28.07 (10)
    & 18.49
    \\
    0.5 \%
    & 25.20 ~~(8)
    & 18.81 (25)
    & 24.07 ~(6)
    & 11.04
    \\
    1.0 \%
    & 22.58 ~~(6)
    & 20.78 (3)
    & 20.78 ~(3)
    & ~5.03
\end{tabular}
\caption{ \label{tab:rec-noise}
  PSNR for with different noise levels $\delta$ with the L-curve method and discrepancy principle for choosing the number $k$ of CGNE iteration, which is displayed in parantheses.
  The best choice serves as a benchmark; it is attained at the iteration index $k$ which maximizes 
	$\operatorname{PSNR}(\mathbf{f}^{\mathrm{av}},\mathbf{f}_k^\delta)$ among $k=1,\dots,100$, where we know the ground truth $\mathbf{f}^{\mathrm{av}}$.
  The backpropagation contains no regularization and thus works only well with very small noise.
  The function $f$ is the same as in \autoref{fig:rec3} with constant rotation axis~$\be_1$ and resolution $N=160$.
  }
\end{table}

\section{Appendix} \label{appendix}
\subsection{Distributions and the Fourier Transform}\label{ss:ktran}
This section collects several results about distributions and the Fourier transform. We refer to \cite{Gru09,Hoe03,PlPoStTa18,ReeSim75} for more details.

\begin{definition}[Test function]
The space $\mathcal{D}(\R^n)$ of test functions is the set of all infinitely differentiable functions $\phi:\R^n\to\C$ with compact support equipped with the inductive limit topology.
\end{definition} 

\begin{definition}[Distribution]
The space $\mathcal{D}'(\R^n)$ of distributions is the topological dual of $\mathcal{D}(\R^n)$, i.e. the space of all functionals $v:\mathcal{D}(\R^n)\to\C$ such that
\begin{gather*}
v[a\phi + b\psi] = av[\phi] + bv[\psi]
\end{gather*}
for all $a,b\in \C$ and $\phi,\psi\in\mathcal{D}(\R^n)$ and
\begin{equation*}
v[\phi_n]\to v[\phi]
\end{equation*}
whenever $\phi_n\to\phi$ in $\mathcal{D}(\R^n)$.
\end{definition}

\begin{example}
Every locally absolutely integrable function $v \in L^1_{\mathrm{loc}} (\R^n)$ can be identified with a distribution via
\begin{equation*}
\phi \mapsto \int_{\R^n} v(\br) \phi(\br) \, d\br.
\end{equation*}
It is common to use the same symbol for the function and the associated distribution.
\end{example}

By $L^p(\Omega)$, $p \in[1,\infty]$, 
we  denote the Banach space of (equivalence classes of) complex-valued functions
with finite norm
\begin{align*}
\|f\|_{L^p} &\coloneqq \left( \int_\Omega |f(\br)|^p \, d \br \right)^\frac1p, \quad p \in [1,\infty),\\
\|f\|_{L^\infty} &\coloneqq \mathrm{esssup}_{\br \in \Omega} |f(\br)|.
\end{align*}                                       
If $\Omega \subset \mathbb \R^n$ is bounded, then these spaces are continuously embedded, i.e., for $1\le p \le q$,
we have
\begin{equation}\label{embedding}
L^q(\Omega) \hookrightarrow L^p(\Omega).
\end{equation}
The functions of $\mathcal{D}(\R^n)$ form a dense set in $L^p(\R^n)$, $p \in[1,\infty)$.

\begin{definition}[Fourier transform on $L^1(\R^n)$] 
The Fourier transform of a function $g \in L^1(\R^n)$ is defined by
	\begin{equation} \label{eq:ft}
		\mathcal{F}g(\bk) \coloneqq (2\pi)^{-\frac{n}{2}} \int_{\R^n} g(\br) \e^{-\i \bk \cdot \br} d\br
	\end{equation}
for all $\bk \in \R^n$.
\end{definition}

The Fourier transform $\mathcal{F}: L^1(\R^n) \rightarrow C_0(\R^n)$ is a linear, continuous operator into the space
$C_0(\R^n)$ of continuous functions vanishing at infinity with operator norm 
$\|\mathcal{F}\|_{L^1 \rightarrow C_0} = (2\pi)^{-\frac{n}{2}}$.
In contrast, the Fourier transform on the two spaces introduced next, maps again onto the same space.

\begin{definition}[Schwartz space]\label{de:schwartz}
The Schwartz space $S(\R^n)$ consists of all infinitely differentiable functions $\phi:\R^n \to \C$ such that
\begin{equation*}
	p_{\alpha,\beta}(\phi) \coloneqq \sup_{\br \in \R^n} |\br^\alpha \partial^\beta \phi(\br)| < +\infty
\end{equation*}
for all multi-indices $\alpha, \beta \in \N_0^n$. The topology on $S(\R^n)$ is induced by the seminorms $p_{\alpha,\beta}$.
\end{definition}

\begin{definition}[Tempered distribution]
The space $S'(\R^n)$ of tempered distributions is the topological dual of $S(\R^n)$.
\end{definition}

\begin{example} \label{ex:L1loc}
Every function $v \in L^1_{\mathrm{loc}} (\R^n)$ that is polynomially bounded for $\|{\br} \| \to \infty$ can be identified with a tempered distribution via
\begin{equation*}
	\phi \mapsto \int_{\R^n} v(\br) \phi(\br) \, d\br.
\end{equation*}
\end{example}

For the following result note that $S(\R^n) \subset L^1(\R^n)$.

\begin{proposition}\label{thm:homeo_S}
The Fourier transform is a linear, bijective, continuous operator
$\mathcal{F}: S(\R^n) \rightarrow S(\R^n)$. It has a
continuous inverse defined by
\begin{gather*}
		\mathcal{F}^{-1}\phi(\br) \coloneqq (2\pi)^{-\frac{n}{2}} \int_{\R^n} \phi(\bk) \e^{\i \bk \cdot \br} d\bk
\end{gather*}
for all $\phi \in S(\R^n)$ and $\br \in \R^n$.
\end{proposition}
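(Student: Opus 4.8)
The plan is to establish the three assertions in turn: that $\ktran$ maps $S(\R^n)$ continuously into itself, that the operator displayed in the statement is a two‑sided inverse of $\ktran$, and that this inverse is again continuous; bijectivity then comes for free.

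\textbf{$\ktran$ maps $S(\R^n)$ continuously into $S(\R^n)$.} I would start from the two elementary identities, valid for every $\phi\in S(\R^n)$ and all multi-indices $\alpha,\beta$,
\begin{equation*}
\ktran(\partial^\beta\phi)(\bk) = (\i\bk)^\beta\,\ktran\phi(\bk),
\qquad
\partial^\alpha(\ktran\phi)(\bk) = \ktran\!\left((-\i\br)^\alpha\phi\right)(\bk),
\end{equation*}
which follow by integration by parts and by differentiation under the integral sign, both justified because $\phi$ and all its derivatives decay faster than any polynomial. Combining them yields
\begin{equation*}
\bk^\beta\,\partial^\alpha(\ktran\phi)(\bk)
= \i^{-|\beta|}\,\ktran\!\left(\partial^\beta\!\left((-\i\br)^\alpha\phi\right)\right)(\bk).
\end{equation*}
Since $\partial^\beta((-\i\br)^\alpha\phi)\in S(\R^n)\subset L^1(\R^n)$, the bound $\|\ktran\|_{L^1\to C_0}=(2\pi)^{-n/2}$ gives $p_{\beta,\alpha}(\ktran\phi)\le(2\pi)^{-n/2}\|\partial^\beta((-\i\br)^\alpha\phi)\|_{L^1}<\infty$, so $\ktran\phi\in S(\R^n)$; bounding the right‑hand $L^1$ norm by a finite linear combination of the seminorms $p_{\gamma,0}$ and expanding the derivative of the product by Leibniz's rule then gives $p_{\beta,\alpha}(\ktran\phi)\le C_{\alpha,\beta}\sum p_{\gamma,\delta}(\phi)$ over finitely many indices. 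By the standard continuity criterion for linear maps between spaces carrying a seminorm‑defined topology, $\ktran$ is continuous.

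\textbf{Fourier inversion.} Write $\mathcal G\phi(\br)\coloneqq(2\pi)^{-n/2}\int_{\R^n}\phi(\bk)\e^{\i\bk\cdot\br}\dd\bk=\ktran\phi(-\br)$, the operator in the statement, and show $\mathcal G\ktran\phi=\phi$ for $\phi\in S(\R^n)$. The naive iterated integral $\int\!\int\phi(\by)\e^{\i\bk\cdot(\br-\by)}\dd\by\,\dd\bk$ is not absolutely convergent, so I would insert a Gaussian cutoff $\e^{-\ve\norm{\bk}^2/2}$, which makes Fubini applicable. Carrying out the $\bk$‑integral first via $\int_{\R^n}\e^{-\ve\norm{\bk}^2/2}\e^{\i\bk\cdot\bz}\dd\bk=(2\pi/\ve)^{n/2}\e^{-\norm{\bz}^2/(2\ve)}$ and substituting $\by=\br+\sqrt\ve\,\bz$ rewrites the regularized integral as $\int_{\R^n}\phi(\br+\sqrt\ve\,\bz)\,(2\pi)^{-n/2}\e^{-\norm{\bz}^2/2}\dd\bz$, which converges to $\phi(\br)$ as $\ve\to0^+$ by dominated convergence and continuity of $\phi$; carrying out the $\by$‑integral first instead produces $(2\pi)^{-n/2}\int_{\R^n}\ktran\phi(\bk)\,\e^{-\ve\norm{\bk}^2/2}\e^{\i\bk\cdot\br}\dd\bk$, which converges to $\mathcal G\ktran\phi(\br)$ by dominated convergence since $\ktran\phi\in S(\R^n)\subset L^1(\R^n)$. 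Equating the two limits gives $\mathcal G\ktran\phi=\phi$; running the same argument with $\phi$ replaced by its reflection (equivalently, by symmetry of the computation) gives $\ktran\mathcal G\phi=\phi$.

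\textbf{Bijectivity and continuity of the inverse.} These two identities exhibit $\mathcal G$ as a two‑sided inverse of $\ktran$, so $\ktran$ is a linear bijection of $S(\R^n)$ with $\ktran^{-1}=\mathcal G$. Finally $\mathcal G=P\circ\ktran$, where $P:\phi\mapsto\phi(-\,\cdot)$ is a linear bijection of $S(\R^n)$ preserving each seminorm $p_{\alpha,\beta}$ and hence continuous; continuity of $\mathcal G$ thus follows from continuity of $\ktran$. I expect the inversion step to be the only genuine obstacle --- it requires the explicit Fourier transform of the Gaussian and two careful uses of dominated convergence to circumvent the failure of Fubini for the unregularized double integral --- whereas the first and third steps are routine bookkeeping with the seminorms $p_{\alpha,\beta}$.
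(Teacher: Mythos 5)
Your proof is correct: the seminorm estimates via the exchange identities $\ktran(\partial^\beta\phi)=(\i\bk)^\beta\ktran\phi$ and $\partial^\alpha\ktran\phi=\ktran((-\i\br)^\alpha\phi)$, the Gaussian-regularized inversion argument, and the identification $\mathcal{F}^{-1}=P\circ\ktran$ with the reflection $P$ are all sound, and the one genuinely delicate step (circumventing the failure of Fubini by inserting $\e^{-\ve\norm{\bk}^2/2}$ and passing to the limit twice by dominated convergence) is handled properly. Note, however, that the paper does not prove this proposition at all: it is stated as background material in the appendix with a pointer to the standard references (Grubb, H\"ormander, Plonka--Potts--Steidl--Tasche, Reed--Simon), and your argument is essentially the classical proof given in those texts, so there is nothing to reconcile with a paper-internal proof.
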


Observe that
\begin{equation}\label{eq:phiFpsi}
	\int_{\R^n} \phi(\br)\, \mathcal{F} \psi(\br) \, d\br = \int_{\R^n} \mathcal{F} \phi(\br)\, \psi(\br) \, d\br,
\end{equation}
holds for all $\phi, \psi \in S(\R^n)$. Combined with the fact that, for every $v\in S'(\R^n)$, the map $\phi \mapsto v[\ktran \phi]$ is another tempered distribution we are led to the following extension of the Fourier transform.

\begin{definition}[Fourier transform on $S'(\R^n)$]\label{de:F} The Fourier transform of $v \in S'(\R^n)$ is defined by
	\begin{equation*}
		\mathcal{F}v[\phi] \coloneqq v[\mathcal{F}\phi]
	\end{equation*}
for all $\phi \in S(\R^n)$.
\end{definition}

With this definition, \autoref{thm:homeo_S} carries over to $S'(\R^n)$.

\begin{proposition}
The Fourier transform is a linear, bijective, continuous operator
$\mathcal{F}: S'(\R^n) \rightarrow S'(\R^n)$.
It  has a continuous inverse defined by
\begin{gather*}
	\mathcal{F}^{-1} v[\phi] = v[\mathcal{F}^{-1}\phi]
\end{gather*}
for all $v \in S'(\R^n)$ and $\phi \in S(\R^n)$.
\end{proposition}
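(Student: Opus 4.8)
The plan is to deduce the statement entirely from its counterpart on $S(\R^n)$, namely \autoref{thm:homeo_S}, by a routine transposition argument; throughout, the topology on $S'(\R^n)$ is the weak-$*$ topology, so a net $v_j\to v$ means $v_j[\phi]\to v[\phi]$ for every $\phi\in S(\R^n)$.

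First I would record that $\mathcal{F}v$ as introduced in \autoref{de:F} genuinely lies in $S'(\R^n)$: the map $\phi\mapsto v[\mathcal{F}\phi]$ is linear because $v$ and $\mathcal{F}$ are, and it is continuous as the composition of the continuous operator $\mathcal{F}\colon S(\R^n)\to S(\R^n)$ with the continuous functional $v$ — this is precisely the observation made just before \autoref{de:F}. Linearity of $v\mapsto\mathcal{F}v$ is then immediate. For weak-$*$ continuity, if $v_j\to v$ in $S'(\R^n)$, then applying the convergence to the fixed test function $\mathcal{F}\phi\in S(\R^n)$ gives $(\mathcal{F}v_j)[\phi]=v_j[\mathcal{F}\phi]\to v[\mathcal{F}\phi]=(\mathcal{F}v)[\phi]$, so $\mathcal{F}v_j\to\mathcal{F}v$.

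Next I would introduce the candidate inverse $G\colon S'(\R^n)\to S'(\R^n)$, $Gv[\phi]\coloneqq v[\mathcal{F}^{-1}\phi]$; that $Gv\in S'(\R^n)$ and that $G$ is linear and weak-$*$ continuous follows exactly as above, this time using the continuity of $\mathcal{F}^{-1}$ from \autoref{thm:homeo_S}. To check $G=\mathcal{F}^{-1}$, I would invoke the identities $\mathcal{F}\mathcal{F}^{-1}\phi=\mathcal{F}^{-1}\mathcal{F}\phi=\phi$ valid on $S(\R^n)$: for every $v\in S'(\R^n)$ and every $\phi\in S(\R^n)$,
\begin{equation*}
(G\mathcal{F}v)[\phi]=(\mathcal{F}v)[\mathcal{F}^{-1}\phi]=v[\mathcal{F}\mathcal{F}^{-1}\phi]=v[\phi],
\end{equation*}
and symmetrically $(\mathcal{F}Gv)[\phi]=v[\phi]$. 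Hence $\mathcal{F}$ is bijective with (continuous) inverse $G$, which is the assertion. Finally I would note the consistency with \autoref{eq:ft}: for $v\in S(\R^n)\subset S'(\R^n)$ identified with the tempered distribution it induces, \autoref{eq:phiFpsi} gives $(\mathcal{F}v)[\phi]=\int_{\R^n} v\,\mathcal{F}\phi=\int_{\R^n}(\mathcal{F}v)\,\phi$, so the distributional and classical Fourier transforms agree.

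There is no genuine obstacle here; the only points requiring a little care are to fix the weak-$*$ topology on $S'(\R^n)$ explicitly and to use the \emph{continuity} of $\mathcal{F}$ and $\mathcal{F}^{-1}$ on $S(\R^n)$ — not merely their bijectivity — when verifying that the transposed maps take values in $S'(\R^n)$ and are continuous.
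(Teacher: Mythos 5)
Your argument is correct and follows the same route the paper takes: the paper states this proposition as an immediate consequence of \autoref{thm:homeo_S} via the transposition definition in \autoref{de:F}, and its explicit proof of the analogous statement for partial Fourier transforms (\autoref{thm:F_I_homeo}) is exactly your argument — continuity of the transposed maps plus the computation $\mathcal{F}\mathcal{F}^{-1}v[\phi]=v[\mathcal{F}^{-1}\mathcal{F}\phi]=v[\phi]$ showing the extensions remain inverse to each other. Your only addition is spelling out weak-$*$ continuity directly instead of invoking the general extension mechanism of \autoref{re:extension}, which is a harmless (and slightly more self-contained) variant.
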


\begin{remark}\label{re:extension}
Many other operations can be extended from $S(\R^n)$ to $S'(\R^n)$ in a way similar to  \autoref{de:F}. 
Suppose $A:S(\R^n) \to S(\R^n)$ is linear and bounded, 
and that there is another linear, bounded operator 
$B:S(\R^n) \to S(\R^n)$ such that $\int_{\R^n} (A\phi) \psi = \int_{\R^n} \phi (B\psi)$ 
for all Schwartz functions $\phi$ and $\psi$. Then $A$ can be uniquely extended to $S'(\R^n)$ 
by setting $Au[\phi] = u[B\phi]$. See \cite[Rem.\ 5.15]{Gru09} for more details.
\end{remark}
Two further operations which can be extended from $S(\R^n)$ to $S'(\R^n)$ in the way explained above are multiplication and convolution with a Schwartz function.
Both $\phi \mapsto \psi \phi$ and $\phi \mapsto \psi * \phi$ map $S(\R^n)$ continuously into itself.
Moreover, regarding convolution note that we have
\begin{equation}\label{eq:M}
	\int_{\R^n} (\psi * \eta)(\br) \phi(\br)\, d\br = \int_{\R^n} \eta(\br) (M\psi * \phi)(\br)\, d\br
\end{equation}
for all $\phi,\psi,\eta \in S(\R^n)$, where the operator $M: S(\R^n) \rightarrow S(\R^n)$ 
is given by $M\phi(x) = \phi(-x).$ This gives rise to the following definition.
\begin{definition} \textbf{(Convolution of a Schwartz function with a tempered distribution)} 
The convolution of $\psi \in S(\R^n)$ with $v \in S'(\R^n)$ is defined by
\begin{equation*}
	(\psi * v) [\phi] \coloneqq v[M\psi * \phi]
\end{equation*}
for all $\phi \in S(\R^n)$.
\end{definition}

The following result relates the operations of multiplication and convolution by means of the Fourier transform.

\begin{theorem}[Convolution Theorem]\label{thm:convolution_theorem}
	For all $\phi \in S(\R^n)$ and all $v \in S'(\R^n)$, we have
	\begin{equation*}
		\mathcal{F}(\phi * v) = (2\pi)^{\frac{n}{2}} \mathcal{F}\phi \mathcal{F}v \quad \text{and} \quad 
		\mathcal{F}\phi * \mathcal{F}v = (2\pi)^{\frac{n}{2}}\mathcal{F}(\phi v) .
	\end{equation*}
\end{theorem}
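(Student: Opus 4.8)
The plan is to reduce both identities to the classical convolution theorem on the Schwartz space and then to propagate them to $S'(\R^n)$ by unwinding the duality definitions introduced above. First I would record the classical case: for $\phi,\psi\in S(\R^n)$ one has $\phi*\psi,\ \phi\psi\in S(\R^n)$ and
\[
\mathcal{F}(\phi*\psi)=(2\pi)^{\frac n2}\,\mathcal{F}\phi\,\mathcal{F}\psi,
\qquad
\mathcal{F}\phi*\mathcal{F}\psi=(2\pi)^{\frac n2}\,\mathcal{F}(\phi\psi).
\]
The first equality follows directly from the integral formula \eqref{eq:ft}, Fubini's theorem (legitimate by the rapid decay of Schwartz functions), and the substitution $\br\mapsto\br-\by$ inside the convolution integral; the second follows from the first by applying $\mathcal{F}^{-1}$ and Fourier inversion (\autoref{thm:homeo_S}). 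I would also note the elementary identities $\mathcal{F}M=M\mathcal{F}$ and $\mathcal{F}^2=M$, where $M\phi(\br)=\phi(-\br)$, which are immediate from \eqref{eq:ft} and \autoref{thm:homeo_S} and are equivalent to $\mathcal{F}^{-1}=M\mathcal{F}$.

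\textbf{First identity.} To prove $\mathcal{F}(\phi*v)=(2\pi)^{\frac n2}\mathcal{F}\phi\,\mathcal{F}v$ for $v\in S'(\R^n)$ and $\phi\in S(\R^n)$, I would test both sides against an arbitrary $\psi\in S(\R^n)$. On the right, the definition of multiplication of a tempered distribution by a Schwartz function and \autoref{de:F} give
\[
\bigl((2\pi)^{\frac n2}\mathcal{F}\phi\,\mathcal{F}v\bigr)[\psi]
=(2\pi)^{\frac n2}\,v\bigl[\mathcal{F}(\mathcal{F}\phi\cdot\psi)\bigr],
\]
and the classical case (second equality) together with $\mathcal{F}^2=M$ rewrites $\mathcal{F}(\mathcal{F}\phi\cdot\psi)=(2\pi)^{-\frac n2}\,M\phi*\mathcal{F}\psi$, so the right side equals $v[M\phi*\mathcal{F}\psi]$. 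On the left, \autoref{de:F} followed by the definition of $\phi*v$ gives $\mathcal{F}(\phi*v)[\psi]=(\phi*v)[\mathcal{F}\psi]=v[M\phi*\mathcal{F}\psi]$, which is the same expression. Since $\psi$ was arbitrary and test functions separate points of $S'(\R^n)$, the identity follows.

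\textbf{Second identity.} For $\mathcal{F}\phi*\mathcal{F}v=(2\pi)^{\frac n2}\mathcal{F}(\phi v)$ I would again test against $\psi\in S(\R^n)$. The definition of the convolution of the Schwartz function $\mathcal{F}\phi$ with the distribution $\mathcal{F}v$ and \autoref{de:F} give $(\mathcal{F}\phi*\mathcal{F}v)[\psi]=v[\mathcal{F}(M\mathcal{F}\phi*\psi)]$; writing $M\mathcal{F}\phi=\mathcal{F}^{-1}\phi$ and applying the classical case (first equality) yields $\mathcal{F}(\mathcal{F}^{-1}\phi*\psi)=(2\pi)^{\frac n2}\phi\cdot\mathcal{F}\psi$, so the left side equals $(2\pi)^{\frac n2}v[\phi\,\mathcal{F}\psi]$. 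The right side unwinds by \autoref{de:F} and the multiplication definition to $((2\pi)^{\frac n2}\mathcal{F}(\phi v))[\psi]=(2\pi)^{\frac n2}v[\phi\,\mathcal{F}\psi]$, which matches.

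There is no deep obstacle here; the work is essentially bookkeeping. The points requiring care are: (a) checking that $\mathcal{F}\phi\cdot\psi$, $M\phi*\mathcal{F}\psi$ and $M\mathcal{F}\phi*\psi$ all lie in $S(\R^n)$, so that every pairing above is legitimate — this uses that multiplication and convolution with a fixed Schwartz function map $S(\R^n)$ continuously into itself, as recalled before \eqref{eq:M}; (b) tracking the powers of $2\pi$ throughout the manipulations; and (c) knowing that the distributional convolution used above reduces to the ordinary convolution when $v\in S(\R^n)$, which is precisely the content of \eqref{eq:M}. I expect (b) to be the most error-prone step rather than a genuine difficulty.
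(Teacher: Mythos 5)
Your argument is correct, and the bookkeeping (including the $2\pi$-powers and the identities $M\mathcal{F}=\mathcal{F}^{-1}$, $\mathcal{F}^2=M$) checks out; the paper itself states \autoref{thm:convolution_theorem} without proof as standard background, but your duality argument — testing against $\psi$, unwinding the definitions of $\mathcal{F}$, multiplication and convolution on $S'(\R^n)$, and invoking the Schwartz-level convolution theorem — is precisely the strategy the paper employs for the analogous partial result in \autoref{thm:ldctS'}. So this is essentially the same approach, carried out correctly.
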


Note that \autoref{thm:convolution_theorem} remains true, if we replace $\mathcal{F}$ by $\mathcal{F}^{-1}$. 

\subsection{Partial Fourier transforms} \label{ss:Fpartial}
In the following, we introduce partial Fourier transforms, 
and show that they are well-defined on $S'(\R^n)$.
In other words, we prove that an $m$-dimensional Fourier transform  $(m < n)$ 
is well-defined for $n$-dimensional tempered distributions and that it possesses an appropriate convolution property.

\begin{definition}[Partial Fourier transforms on $S(\R^n)$]\label{de:F_I}
For $j \in \{1,\ldots,n\}$, we define the partial Fourier transform $\mathcal{F}_{j}$ of  $\phi\in S(\R^n)$
by
\begin{equation}\label{eq:F_I}
\mathcal{F}_{j} \phi(r_1,\ldots, r_{j-1}, k_j, r_{j+1},\ldots r_n) 
=  (2\pi)^{-\frac{1}{2}} \int_{\R} \phi(r_1,\ldots, r_{j-1}, s, r_{j-1},\ldots r_n) \e^{-\i k_j  s} d s.
\end{equation}
More generally, for an index set $I = \{j_1,\ldots, j_m\}\subset \{1,\ldots,n\}$, the partial Fourier transform  $\mathcal{F}_I\phi$ of $\phi\in S(\R^n)$
is defined by
\begin{equation}\label{eq:F_I_1}
	\mathcal{F}_I \phi \coloneqq \mathcal{F}_{j_m} \cdots \mathcal{F}_{j_1} \phi. 
\end{equation}
\end{definition}
Note that, for fixed $r_1,\ldots, r_{j-1}, r_{j+1},\ldots r_n$, the map $r_j \mapsto \phi(\br)$, defines a Schwartz function on $\R$. This function is then also in $L^1(\R)$ and therefore the integral in \autoref{eq:F_I} is well-defined. Moreover, by the Fubini-Tonelli theorem the order in which the $\ktran_{j_i}$ appear in \autoref{eq:F_I_1} does not matter. Finally, we point out that $\ktran_{1,\dots,n} = \ktran$.

\begin{proposition} 
Partial Fourier transforms are linear, bijective, continuous operators $\mathcal{F}_I: S(\R^n) \to S(\R^n)$. They have continuous inverses defined by
\begin{equation*}
	\mathcal{F}^{-1}_I \phi \coloneqq \mathcal{F}^{-1}_{j_m} \cdots \mathcal{F}^{-1}_{j_1} \phi,
\end{equation*} 
where
\begin{equation*}
	\mathcal{F}^{-1}_{j} \phi(r_1,\ldots, r_{j-1}, k_j, r_{j+1},\ldots r_n) 
=  (2\pi)^{-\frac{1}{2}} \int_{\R} \phi(r_1,\ldots, r_{j-1}, s, r_{j-1},\ldots r_n) \e^{\i k_j  s} d s.
\end{equation*}
\end{proposition}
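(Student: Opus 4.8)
The plan is to reduce everything to the one-variable operators $\mathcal{F}_j$ and then assemble $\mathcal{F}_I$ by composition. By \autoref{de:F_I} we have $\mathcal{F}_I = \mathcal{F}_{j_m}\cdots\mathcal{F}_{j_1}$, and a composition of linear, bijective, continuous maps is again linear, bijective and continuous; moreover, as already noted after \autoref{de:F_I}, the single-variable transforms commute. Hence it suffices to prove that each $\mathcal{F}_j$, $j\in\{1,\dots,n\}$, is a linear, bijective, continuous self-map of $S(\R^n)$ whose inverse is the operator $\mathcal{F}_j^{-1}$ displayed in the statement. Linearity of $\mathcal{F}_j$ is immediate from \autoref{eq:F_I}.

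For the mapping property and continuity I would first record the commutation relations obtained by integration by parts and by differentiation under the integral sign (both legitimate because, for fixed values of the remaining coordinates, $s\mapsto\phi(r_1,\dots,s,\dots,r_n)$ is a Schwartz function of one variable): multiplication of $\mathcal{F}_j\phi$ by $k_j$ and the derivative $\partial_{k_j}\mathcal{F}_j\phi$ can both be rewritten as $\mathcal{F}_j$ applied to $\partial_{r_j}\phi$ and to $r_j\phi$, respectively, up to factors $\pm\i$; on the other hand, multiplication by $r_i$ and differentiation $\partial_{r_i}$ for $i\neq j$ commute with $\mathcal{F}_j$, since these variables only act as parameters. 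Applying an arbitrary $\br^\alpha\partial^\beta$ to $\mathcal{F}_j\phi$ and pushing every factor inside the integral by these rules expresses $\br^\alpha\partial^\beta\mathcal{F}_j\phi$ as $\mathcal{F}_j\psi$ with $\psi\in S(\R^n)$ a finite linear combination of terms $\br^{\alpha'}\partial^{\beta'}\phi$. It then only remains to estimate $\sup|\mathcal{F}_j\psi|$: from $|\mathcal{F}_j\psi(\cdot,k_j)|\le(2\pi)^{-1/2}\int_\R|\psi(\cdot,s)|\,ds$ together with $\int_\R|\psi(\cdot,s)|\,ds\le \pi\sup_{\br}(1+r_j^2)|\psi(\br)|$ one obtains $p_{\alpha,\beta}(\mathcal{F}_j\phi)\le C\sum p_{\alpha'',\beta''}(\phi)$ for finitely many multi-indices. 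This shows simultaneously that $\mathcal{F}_j$ maps $S(\R^n)$ into itself and that it is continuous.

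The very same argument, with $\e^{-\i k_j s}$ replaced by $\e^{\i k_j s}$, shows that the operator $\mathcal{F}_j^{-1}$ from the statement maps $S(\R^n)$ continuously into itself. To see that it is a two-sided inverse of $\mathcal{F}_j$, I would freeze the coordinates $r_i$, $i\neq j$, and apply the one-dimensional Fourier inversion theorem — the case $n=1$ of \autoref{thm:homeo_S} — to the Schwartz function $s\mapsto\phi(r_1,\dots,s,\dots,r_n)$; this yields $\mathcal{F}_j^{-1}\mathcal{F}_j\phi=\phi$ and $\mathcal{F}_j\mathcal{F}_j^{-1}\phi=\phi$ pointwise, hence as elements of $S(\R^n)$. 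Thus $\mathcal{F}_j$ is bijective with continuous inverse $\mathcal{F}_j^{-1}$. Composing, $\mathcal{F}_I$ is linear, bijective and continuous with inverse $\mathcal{F}_{j_1}^{-1}\cdots\mathcal{F}_{j_m}^{-1}$; since the $\mathcal{F}_{j_i}$ — and hence their inverses — commute, this coincides with $\mathcal{F}_{j_m}^{-1}\cdots\mathcal{F}_{j_1}^{-1}=\mathcal{F}_I^{-1}$, which is exactly the asserted formula.

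The only genuinely technical step is the seminorm estimate establishing continuity of $\mathcal{F}_j$; everything else is either bookkeeping or a direct appeal to the one-dimensional theory already available in \autoref{thm:homeo_S}. The mild subtlety there is keeping careful track of which operations pass through $\mathcal{F}_j$ unchanged (multiplication and differentiation in the parameter variables $r_i$, $i\neq j$) and which get interchanged (multiplication by $r_j$ versus differentiation in $k_j$), so that after moving $\br^\alpha\partial^\beta$ inside one really is left with $\mathcal{F}_j$ of a bona fide Schwartz function whose supremum norm is controlled by finitely many seminorms of $\phi$.
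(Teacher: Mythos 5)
Your proposal is correct and follows essentially the same route as the paper: both rest on the exchange relations between the partial Fourier transform and multiplication/differentiation, the resulting seminorm estimates for continuity, and the classical Fourier inversion theorem applied with the remaining coordinates frozen. The only difference is organizational — you prove everything for the single-coordinate transforms $\mathcal{F}_j$ and then compose, whereas the paper treats the block transform $\mathcal{F}_I$ (taking $I=\{1,\dots,m\}$ without loss of generality) in one step — which changes nothing of substance.
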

\begin{proof}
This result can be shown in essentially the same way as \autoref{thm:homeo_S}. The main steps are as follows.

First, from \autoref{eq:F_I} and \autoref{eq:F_I_1} we deduce that the function $\mathcal{F}_I\phi$ is bounded. Second, it is infinitely differentiable and the operator $\mathcal{F}_I$ exchanges differentiation and multiplication with polynomials in the expected way. More specifically, and assuming that $I=\{1,\ldots,m\}$ to keep the notation simple, we have
\begin{equation}\label{eq:ldftproperty}
	(\bk,\br)^\alpha \partial^\beta \mathcal{F}_I \phi =
	(-\i)^{\abs{\alpha_1}+\abs{\beta_1}} \mathcal{F}_I \left( \partial_{\bs}^{\alpha_1} \br^{\alpha_2} \bs^{\beta_1} \partial_{\br}^{\beta_2} \phi \right)
\end{equation}
for all $\bk \in \R^m$, $\br \in \R^{n-m}$ and all multi-indices $\alpha = (\alpha_1,\alpha_2), \beta = (\beta_1,\beta_2) \in \N_0^m \times \N_0^{n-m}$. These two facts imply that $\mathcal{F}_I\phi \in S(\R^n)$.

Next, exploiting \autoref{eq:ldftproperty}, we can show that for each pair of multi-indices 
$\alpha, \beta$ there exists another pair $\alpha',\beta'$ and a $C>0$ such that 
$p_{\alpha,\beta}(\mathcal{F}_I\phi) \le C p_{\alpha',\beta'}(\phi)$. 
This shows that $\mathcal{F}_I$ is continuous on $S(\R^n)$.

Finally, note that $\bk \mapsto \mathcal{F}_I\phi(\bk,\br)$ is nothing but the regular 
Fourier transform of the Schwartz function 
$\bs \mapsto \phi(\bs,\br)$. Therefore $\mathcal{F}_I$ has an inverse. 
It is continuous on $S(\R^n)$ for the same reasons $\mathcal{F}_I$ is.
\end{proof}

Since \autoref{eq:phiFpsi} remains valid if we replace $\mathcal{F}$ by $\mathcal{F}_I$, 
we can extend the partial Fourier transforms to $S'(\R^n)$.
\begin{definition}[Partial Fourier transforms on $S'(\R^n)$]\label{de:F_I_S'}
For every $v \in S'(\R^n)$, we define $\mathcal{F}_I v$ by
	\begin{equation*}
		\mathcal{F}_I v[\phi] \coloneqq v[\mathcal{F}_I\phi]
	\end{equation*}
for all $\phi \in S(\R^n)$.
\end{definition}

\begin{proposition}\label{thm:F_I_homeo}
Partial Fourier transforms are linear, bijective, continuous operators $\mathcal{F}_I: S'(\R^n) \to S'(\R^n)$. They have continuous inverses defined by
\begin{gather*}
	\mathcal{F}_I^{-1} v[\phi] = v[\mathcal{F}_I^{-1}\phi]
\end{gather*}
for all $v \in S'(\R^n)$ and all $\phi \in S(\R^n)$.
\end{proposition}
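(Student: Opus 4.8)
The plan is to mirror, almost verbatim, the passage from $S(\R^n)$ to $S'(\R^n)$ that was already carried out for the full Fourier transform (Definition~\ref{de:F} and the proposition following it), now using as input the preceding proposition — which states that $\mathcal{F}_I$ and $\mathcal{F}_I^{-1}$ are continuous linear bijections of $S(\R^n)$ onto itself — together with the fact, noted just before Definition~\ref{de:F_I_S'}, that \eqref{eq:phiFpsi} stays valid with $\mathcal{F}$ replaced by $\mathcal{F}_I$. In the language of Remark~\ref{re:extension}, this is exactly the transpose-extension principle in the special case $A = B = \mathcal{F}_I$, the required identity $\int_{\R^n}(\mathcal{F}_I\phi)\psi = \int_{\R^n}\phi\,(\mathcal{F}_I\psi)$ being the $\mathcal{F}_I$-analogue of \eqref{eq:phiFpsi} and itself a consequence of Fubini.

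First I would check that $\mathcal{F}_I$ is well defined and linear on $S'(\R^n)$. For $v \in S'(\R^n)$ the functional $\phi \mapsto v[\mathcal{F}_I\phi]$ is the composition of the continuous linear map $\mathcal{F}_I : S(\R^n) \to S(\R^n)$ with the continuous linear functional $v$, hence is again a continuous linear functional on $S(\R^n)$, i.e. $\mathcal{F}_I v \in S'(\R^n)$; linearity of $v \mapsto \mathcal{F}_I v$ is immediate from Definition~\ref{de:F_I_S'}. Applying the same reasoning to $\mathcal{F}_I^{-1} : S(\R^n) \to S(\R^n)$ shows that $G v[\phi] \coloneqq v[\mathcal{F}_I^{-1}\phi]$ defines an element $G v \in S'(\R^n)$ and that $v \mapsto G v$ is linear. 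Continuity is then a one-line matter once one records that $S'(\R^n)$ carries the weak-$*$ topology: if $v_k \to v$ in $S'(\R^n)$, then for each fixed $\phi \in S(\R^n)$ one has $\mathcal{F}_I v_k[\phi] = v_k[\mathcal{F}_I\phi] \to v[\mathcal{F}_I\phi] = \mathcal{F}_I v[\phi]$, so $\mathcal{F}_I v_k \to \mathcal{F}_I v$; the identical argument gives continuity of $G$.

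It remains to prove bijectivity, which I would do by showing $G = \mathcal{F}_I^{-1}$. Since $\mathcal{F}_I^{-1}\mathcal{F}_I = \mathcal{F}_I\mathcal{F}_I^{-1} = \mathrm{id}_{S(\R^n)}$ on Schwartz functions, for every $v \in S'(\R^n)$ and every $\phi \in S(\R^n)$ we get $(G\mathcal{F}_I v)[\phi] = (\mathcal{F}_I v)[\mathcal{F}_I^{-1}\phi] = v[\mathcal{F}_I\mathcal{F}_I^{-1}\phi] = v[\phi]$, and symmetrically $(\mathcal{F}_I G v)[\phi] = v[\phi]$. Hence $G\mathcal{F}_I = \mathcal{F}_I G = \mathrm{id}_{S'(\R^n)}$, so $\mathcal{F}_I$ is bijective with continuous inverse given by $\mathcal{F}_I^{-1} v[\phi] = v[\mathcal{F}_I^{-1}\phi]$, as asserted.

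I expect no real obstacle: the entire analytic content has been pushed into the preceding $S(\R^n)$-level proposition (boundedness of $\mathcal{F}_I$ on $S$, exchange of differentiation and polynomial multiplication, the seminorm estimates), so what remains is purely formal. The only points I would be careful to state explicitly are that ``continuous'' is meant with respect to the weak-$*$ topology — making continuity a direct consequence of its definition — and that the inverse identities require invoking $\mathcal{F}_I\mathcal{F}_I^{-1} = \mathrm{id}$ at the level of $S(\R^n)$, not merely of $S'(\R^n)$.
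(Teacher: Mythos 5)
Your proposal is correct and follows essentially the same route as the paper's proof: continuity via the transpose-extension principle of \autoref{re:extension} (which you unpack as weak-$*$ continuity), and bijectivity by reducing $\mathcal{F}_I\mathcal{F}_I^{-1}=\mathcal{F}_I^{-1}\mathcal{F}_I=\mathrm{id}$ on $S'(\R^n)$ to the corresponding identity on $S(\R^n)$. The only difference is that you spell out details the paper delegates to the remark, which is harmless.
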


\begin{proof}
The continuity of the extension of $\ktran_I$ to $S'(\R^n)$ 
follows from \autoref{re:extension}. 
Analogously, $\ktran_I^{-1}$ can be extended to a continuous operator on $S'(\R^n)$. 
These two extensions are still inverse to each other, because 
$\ktran_I \ktran_I^{-1} v [\phi ] =   v [\ktran_I^{-1} \ktran_I \phi ] = v [ \phi ] = \ktran_I^{-1} \ktran_I v [\phi ] $.
\end{proof}

\begin{remark} It follows from \autoref{de:F_I} that partial Fourier transforms can be composed in the following way. 
Suppose $I$ and $J$ are disjoint subsets of $\{1,\ldots,n\}$. 
Then an application of the Fubini-Tonelli theorem shows 
that $\mathcal{F}_I \mathcal{F}_J \phi = \mathcal{F}_{I\cup J} \phi$ 
for every $\phi \in S(\R^n)$. 
This property immediately carries over to the distributional setting
\begin{equation*}
	\mathcal{F}_I \mathcal{F}_J v[\phi] = v[\mathcal{F}_J \mathcal{F}_I\phi] = v[\mathcal{F}_{I\cup J} \phi] = \mathcal{F}_{I\cup J}  v[\phi]
\end{equation*}
for all $v \in S'(\R^n)$ and all $\phi \in S(\R^n)$.
In the proof of \autoref{thm:fourierdiffraction}, 
where $n=3$, 
we make use of the special case $ \mathcal{F}_3 \mathcal{F}_{1,2} = \mathcal{F}$ 
implying that
\begin{equation}\label{eq:F12=F3-1F}
	\mathcal{F}_{1,2} =  \mathcal{F}^{-1}_3 \mathcal{F}.
\end{equation}
\end{remark}

\begin{definition}[Partial convolution on $S(\R^n)$]
Let $j\in\{1,\ldots,n\}$ and $\phi, \psi \in S(\R^n)$. 
We denote by $\phi \stackrel{j}{*} \psi$ the convolution of $\phi$ and $\psi$ along the $j$-th coordinate, i.e., 
\begin{equation*}
	(\phi \stackrel{j}{*} \psi) (\br) \coloneqq \int_\R \phi (r_1,\ldots, r_{j-1},s,r_{j+1},\ldots,r_n) \psi(r_1,\ldots, r_{j-1},r_j-s,r_{j+1},\ldots,r_n)  \, ds.
\end{equation*}
\end{definition}

As is the case for ordinary convolution, partial convolution with a Schwartz function is a continuous operation on $S(\R^n)$. 

\begin{proposition}
For every $\psi \in S(\R^n)$ and $j\in\{1,\ldots,n\}$ the operation $\phi \mapsto \psi \stackrel{j}{*} \phi$ maps $S(\R^n)$ continuously into itself.
\end{proposition}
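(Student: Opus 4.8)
The plan is to check directly that $\psi \stackrel{j}{*}\phi \in S(\R^n)$ and, more precisely, that for every pair of multi-indices $\alpha,\beta$ there are finitely many pairs $\alpha',\beta'$ and a constant $C$ depending only on $\psi$ (in fact on finitely many of its seminorms) with $p_{\alpha,\beta}(\psi \stackrel{j}{*}\phi) \le C \sum p_{\alpha',\beta'}(\phi)$. Since a linear map between the Fréchet spaces $S(\R^n)$ is continuous exactly when every target seminorm is dominated by finitely many source seminorms, this estimate gives the claim. It is convenient to regard $\psi \stackrel{j}{*}\phi$ as an ordinary one-dimensional convolution in the variable $r_j$ with the remaining $n-1$ coordinates held fixed; the defining integral converges because, for fixed values of these coordinates, $s \mapsto \psi(r_1,\dots,s,\dots,r_n)$ and $s \mapsto \phi(r_1,\dots,s,\dots,r_n)$ are Schwartz functions of $s$, hence integrable.

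First I would settle differentiability. Differentiation under the integral sign is legitimate by dominated convergence, since all derivatives of $\psi$ and $\phi$ decay rapidly and therefore supply integrable majorants. A derivative $\partial_{r_j}$ can be transferred entirely onto $\phi$, that is $\partial_{r_j}(\psi \stackrel{j}{*}\phi) = \psi \stackrel{j}{*}(\partial_{r_j}\phi)$, by translation invariance of the convolution in the $j$-th slot, while a derivative $\partial_{r_i}$ with $i \neq j$ falls on both factors and is handled by the Leibniz rule inside the integral. Iterating, $\partial^\beta(\psi \stackrel{j}{*}\phi)$ is a finite linear combination of partial convolutions $(\partial^\gamma\psi) \stackrel{j}{*} (\partial^\delta\phi)$ with $\gamma_j = 0$ and $\delta_j = \beta_j$.

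The decay estimate is the technical core. Fix one such term and consider $\abs{\br^\alpha\, (\partial^\gamma\psi) \stackrel{j}{*} (\partial^\delta\phi)(\br)}$. I split the weight in the $j$-th variable via $r_j = s + (r_j - s)$ and the binomial theorem, $r_j^{\alpha_j} = \sum_{k=0}^{\alpha_j} \binom{\alpha_j}{k} s^k (r_j - s)^{\alpha_j - k}$, attaching $s^k$ to the $\psi$-factor (which is evaluated with $j$-th argument $s$) and $(r_j - s)^{\alpha_j - k}$ to the $\phi$-factor (evaluated with $j$-th argument $r_j - s$); the remaining powers $\prod_{i \neq j} r_i^{\alpha_i}$ are also assigned to the $\psi$-factor. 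The essential point is to keep the $s$-integral convergent: from the $\psi$-factor I pull out $(1+s^2)^{-1}$ at the expense of two extra powers of $s$, so that $(1+s^2)\bigl|s^k \prod_{i \neq j} r_i^{\alpha_i}\,(\partial^\gamma\psi)(r_1,\dots,s,\dots,r_n)\bigr|$ is bounded, uniformly in all variables, by a finite sum of Schwartz seminorms of $\psi$. The leftover $\phi$-factor $\bigl|(r_j - s)^{\alpha_j - k}(\partial^\delta\phi)(r_1,\dots,r_j - s,\dots,r_n)\bigr|$ is bounded by a single Schwartz seminorm of $\phi$, and integrating $(1+s^2)^{-1}$ over $\R$ contributes the finite factor $\pi$. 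Summing over $k$ and over the finitely many terms produced by the Leibniz expansion yields the desired bound $p_{\alpha,\beta}(\psi \stackrel{j}{*}\phi) \le C(\psi)\sum p_{\alpha',\beta'}(\phi)$, which proves simultaneously that $\psi \stackrel{j}{*}\phi \in S(\R^n)$ and that $\phi \mapsto \psi \stackrel{j}{*}\phi$ is continuous on $S(\R^n)$.

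I expect the only genuine obstacle to be the bookkeeping in this last step: one must resist placing all of the available rapid decay on whichever factor looks most convenient and instead deliberately spend two powers of $s$ on the $\psi$-factor in order to retain an $s$-integrable majorant, while still ending up with a bound expressed purely in terms of seminorms of $\phi$. Everything else — justifying differentiation under the integral, the Leibniz rule, and the transfer of $r_j$-derivatives onto $\phi$ — is routine. (Alternatively, once the partial convolution identity $\mathcal{F}_j(\psi \stackrel{j}{*}\phi) = \sqrt{2\pi}\,\mathcal{F}_j\psi \cdot \mathcal{F}_j\phi$ is available, the statement also follows from the fact that $\mathcal{F}_j$ is a homeomorphism of $S(\R^n)$ together with the continuity of multiplication by the Schwartz function $\mathcal{F}_j\psi$; but we do not assume that here.)
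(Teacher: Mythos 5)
Your proof is correct and follows essentially the same route as the paper's (sketched) argument: differentiation under the integral, a Leibniz-type decomposition of $\partial^\beta(\psi \stackrel{j}{*}\phi)$ into partial convolutions of derivatives, and boundedness of the polynomially weighted result expressed as a seminorm bound $p_{\alpha,\beta}(\psi \stackrel{j}{*}\phi)\le C(\psi)\sum p_{\alpha',\beta'}(\phi)$, which yields continuity on the Fr\'echet space $S(\R^n)$. You merely spell out the decay estimate (the binomial splitting of $r_j^{\alpha_j}$ and the $(1+s^2)^{-1}$ trick to keep the $s$-integral convergent) in more detail than the paper does.
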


\begin{proof}
We sketch the main steps of this proof. First, since we can differentiate under the integral sign, the function $\psi \stackrel{j}{*} \phi$ is infinitely differentiable for all $\phi,\psi \in S(\R^n)$. Furthermore, by means of a generalized product rule one can show that for every multi-index $\alpha$, the derivative $\partial^\alpha ( \psi \stackrel{j}{*} \phi )$ is a linear combination of partial convolutions of derivatives of $\phi$ and $\psi$. Combining this with the fact that $\br^\beta (\chi \stackrel{j}{*} \eta)$ is a bounded function for all multi-indices $\beta$ and Schwartz functions $\chi, \eta$, we find that $\br^\beta \partial^\alpha ( \psi \stackrel{j}{*} \phi )$ is bounded too. In fact, it has an upper bound of the form $Cp_{\alpha',\beta'}(\phi)$, where $C\ge 0$ depends on $\psi.$ This finishes the proof.
\end{proof}

Partial convolution is not only continuous, but also satisfies an identity analogous to \autoref{eq:M}, namely
\begin{equation*}
	\int_{\R^n} (\eta \stackrel{j}{*} \phi)(\br) \psi(\br)  \, d\br = \int_{\R^n} \phi (\br) (M_j\eta \stackrel{j}{*} \psi)(\br) \, d\br
\end{equation*}
for all $\phi,\psi,\eta \in S(\R^n)$, where $M_j\phi(\br) = \phi(r_1,\ldots,r_{j-1},-r_j, r_{j+1},\ldots, r_n).$ Thus we can extend it to a continuous operation on $S'(\R^3)$ in the following way, recall \autoref{re:extension}.

\begin{definition} \textbf{(Partial convolution of a Schwartz function with a tempered distribution)}\enspace
The partial convolution of $\psi \in S(\R^n)$ with $v \in S'(\R^n)$ is defined by
\begin{equation*}
	(\psi \stackrel{j}{*} v) [\phi] \coloneqq v [M_j\psi \stackrel{j}{*} \phi]
\end{equation*}
for all $\phi \in S(\R^n)$.\label{de:ldc}
\end{definition}

Finally, we have all prerequisites 
for formulating a one-dimensional convolution theorem for tempered distributions on $\R^n$.

\begin{theorem}[Partial Convolution Theorem on $S'(\R^n)$]\label{thm:ldctS'}
For all $\phi \in S(\R^n)$ and $v \in S'(\R^n)$, we have
\begin{equation} \label{eq:ldctS'}
	\mathcal{F}_j (\phi \stackrel{j}{*} v) = (2\pi)^{\frac{1}{2}} \mathcal{F}_j \phi \mathcal{F}_j v,
	\quad \text{and} \quad 
	\mathcal{F}_j \phi \stackrel{j}{*} \mathcal{F}_j v = (2\pi)^{\frac{1}{2}} \mathcal{F}_j (\phi v) 
\end{equation}
The same identities hold for $\mathcal{F}_j^{-1}$ instead of $\mathcal{F}_j$.
\end{theorem}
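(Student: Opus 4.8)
The plan is to reduce both distributional identities to their Schwartz-space counterparts by unwinding the duality definitions, exactly as is done for the ordinary convolution theorem \autoref{thm:convolution_theorem}.

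First I would record the partial convolution theorem at the level of Schwartz functions: for $\psi,\eta\in S(\R^n)$,
\[
  \mathcal{F}_j(\psi\stackrel{j}{*}\eta) = (2\pi)^{\frac12}\,\mathcal{F}_j\psi\,\mathcal{F}_j\eta,
  \qquad
  \mathcal{F}_j\psi\stackrel{j}{*}\mathcal{F}_j\eta = (2\pi)^{\frac12}\,\mathcal{F}_j(\psi\eta).
\]
These follow by freezing the remaining $n-1$ coordinates — along which all slices are Schwartz functions on $\R$ — applying the classical one-dimensional convolution theorem slicewise, and invoking Fubini--Tonelli to justify the interchanges; the preceding propositions already guarantee that the objects involved lie in $S(\R^n)$, so these are genuine Schwartz-function identities. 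I will also use the elementary facts $\mathcal{F}_j\mathcal{F}_j=M_j$ and hence $\mathcal{F}_j^{-1}=M_j\mathcal{F}_j=\mathcal{F}_jM_j$.

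Next I would test the first distributional identity against an arbitrary $\chi\in S(\R^n)$. Using the defining relations $(\phi\stackrel{j}{*}v)[\chi]=v[M_j\phi\stackrel{j}{*}\chi]$ and $\mathcal{F}_jv[\chi]=v[\mathcal{F}_j\chi]$, together with the definition of multiplication $(\psi v)[\chi]=v[\psi\chi]$, one gets
\[
  \mathcal{F}_j(\phi\stackrel{j}{*}v)[\chi] = (\phi\stackrel{j}{*}v)[\mathcal{F}_j\chi] = v\bigl[M_j\phi\stackrel{j}{*}\mathcal{F}_j\chi\bigr],
\]
while on the other side
\[
  (2\pi)^{\frac12}(\mathcal{F}_j\phi\,\mathcal{F}_jv)[\chi] = (2\pi)^{\frac12}\,\mathcal{F}_jv[\mathcal{F}_j\phi\cdot\chi] = (2\pi)^{\frac12}\,v\bigl[\mathcal{F}_j(\mathcal{F}_j\phi\cdot\chi)\bigr].
\]
Comparing the two, the identity reduces to the Schwartz-function equality $M_j\phi\stackrel{j}{*}\mathcal{F}_j\chi=(2\pi)^{\frac12}\mathcal{F}_j(\mathcal{F}_j\phi\cdot\chi)$, which is the second identity of the previous step with $\psi=\mathcal{F}_j\phi$, $\eta=\chi$ (using $\mathcal{F}_j\mathcal{F}_j\phi=M_j\phi$). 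The second identity in \autoref{eq:ldctS'} is handled symmetrically, invoking instead the first Schwartz-function identity, and the statements for $\mathcal{F}_j^{-1}$ follow verbatim after replacing $\mathcal{F}_j$ by $\mathcal{F}_j^{-1}$ and adjusting the reflections via $\mathcal{F}_j^{-1}=M_j\mathcal{F}_j$.

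I expect no real obstacle: the argument is duality bookkeeping, and its only substantive ingredient — continuity of $\mathcal{F}_j$ and of partial convolution on $S(\R^n)$, needed so that the Schwartz-function identities are meaningful — has already been established. The one place demanding care is keeping the reflection operators $M_j$ and the $(2\pi)^{1/2}$ constants aligned so that the two sides match exactly; the slicewise Fubini argument underpinning the $S(\R^n)$ identities is the most tedious step, but it is conceptually routine.
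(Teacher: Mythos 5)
Your proposal is correct and follows essentially the same route as the paper's proof: both reduce the distributional identities, via the duality definitions of $\mathcal{F}_j$, partial convolution and multiplication, to the Schwartz-level partial convolution theorem obtained by applying the one-dimensional convolution theorem slicewise. The only cosmetic difference is that you invoke $\mathcal{F}_j\mathcal{F}_j\phi=M_j\phi$ where the paper uses the equivalent identity $\mathcal{F}_j=\mathcal{F}_j^{-1}M_j$.
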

\begin{proof}
First we observe that a corresponding one-dimensional convolution theorem for Schwartz functions, that is,
\begin{equation} \label{eq:ldctS}
	\mathcal{F}_j (\phi \stackrel{j}{*} \psi ) 
	= (2\pi)^{\frac{1}{2}} \mathcal{F}_j \phi\mathcal{F}_j \psi \quad \text{ and } \quad
	\mathcal{F}_j \phi \stackrel{j}{*} \mathcal{F}_j \psi = (2\pi)^{\frac{1}{2}} \mathcal{F}_j (\phi\psi)
\end{equation}
for all $\phi,\psi \in S(\R^n)$ holds true. Indeed, letting the coordinates $r_i$ for $i \neq j$ be fixed, 
\autoref{eq:ldctS} is nothing but a standard convolution theorem for the one-dimensional 
Schwartz functions $r_j \mapsto \phi(\br)$ and $r_j \mapsto \psi(\br)$.

Next, it follows directly from \autoref{de:F_I} and \autoref{de:ldc} that
\begin{equation*}
	\mathcal{F}_j (\phi \stackrel{j}{*} v) [\psi]
	=	\phi \stackrel{j}{*} v [\mathcal{F}_j \psi]
	=	v [M_j\phi \stackrel{j}{*} \mathcal{F}_j \psi].
\end{equation*}
The second identity in \autoref{eq:ldctS} implies that
\begin{equation*}
	M_j\phi \stackrel{j}{*} \mathcal{F}_j \psi = (2\pi)^{\frac{1}{2}}  \mathcal{F}_j \left( (\mathcal{F}_j^{-1} M_j \phi) \psi \right)
	= 
	(2\pi)^{\frac{1}{2}}  \mathcal{F}_j \left( (\mathcal{F}_j \phi) \psi \right), 
\end{equation*}
where we have also exploited the fact that $\mathcal{F}_j = \mathcal{F}_j^{-1}M_j$. Combining the previous two equalities gives
\begin{equation*}
	\mathcal{F}_j (\phi \stackrel{j}{*} v) [\psi]
	= (2\pi)^{\frac{1}{2}} v[\mathcal{F}_j \left( (\mathcal{F}_j \phi) \psi \right)]
	= (2\pi)^{\frac{1}{2}} \mathcal{F}_j v[  (\mathcal{F}_j \phi) \psi ]
	= (2\pi)^{\frac{1}{2}} (\mathcal{F}_j \phi) \mathcal{F}_j v[ \psi ].
\end{equation*} 
In the last equality we have used the fact that multiplication of a tempered distribution with a Schwartz function 
is defined by 
$\phi v[\psi] = v[\phi \psi]$.  
This finishes the first part of \autoref{eq:ldctS'}. 
The second part follows analogously, as do the corresponding formulas for the inverse $\mathcal{F}^{-1}_j$.
	$\Box$

\subsection{Proof of \autoref{thm:fourierdiffraction}} \label{sec:fdt}
	In order to prove \autoref{thm:fourierdiffraction}, we have to characterize the partial Fourier transform $\ktran_{1,2}G$, where the subscripts indicate that the transform is taken with respect to $r_1$ and $r_2$ and
	\begin{equation}\label{eq:green}
	G(\br) \coloneqq \frac{\e^{\i k_0\norm{\br}}}{4\pi\norm{\br}}
	\end{equation}
	is the Green's function of the Helmholtz operator $-(\Delta+k_0^2)$ which satisfies the Sommerfeld radiation condition, see \cite[Chap.\ 2]{ColKre13}. Since $G(\cdot,\cdot,r_3)$ is not in $L^1(\R^2)$ for any $r_3\in \R$,  we cannot compute this transform $\ktran_{1,2} G$ as an ordinary Fourier integral. Instead, in \autoref{thm:partfourierG} we calculate $\ktran_{1,2}G$ in the distributional sense. Mathematical details about distributions, (partial) Fourier transforms and convolutions can be found in \autoref{ss:ktran} and \autoref{ss:Fpartial}.

	We define
	\begin{equation*}
	G_\epsilon(\br) \coloneqq \e^{-\epsilon \norm{\br}} G(\br), \quad \epsilon >0,
	\end{equation*}
	and note that $G_\epsilon \to G$ in $S'(\R^3)$ for $\epsilon \to 0$. We will also use the abbreviation 
	\begin{equation} \label{eq:kappa_eps}
	\kappa_\epsilon \coloneqq \sqrt{ (k_0+ \i \epsilon)^2-k_1^2 - k_2^2}
	\end{equation}
	to denote the principal square root of $(k_0+ \i \epsilon)^2-k_1^2 - k_2^2$, 
	that is, the root with positive imaginary part.
	
	\begin{lemma} \label{thm:partfourierG}
		The partial Fourier transform $\ktran_{1,2}G \in S'(\R^3)$ is given by
		\begin{equation*}
		\ktran_{1,2}G [\phi]
		= \lim_{\epsilon \to 0} \ktran_{1,2}G_\epsilon [\phi]
		= \lim_{\epsilon \to 0} \int_{\R^3} \frac{\i \e^{\i  \kappa_\epsilon \abs{r_3}}}{4\pi \kappa_\epsilon} \phi(k_1,k_2,r_3)\, d(k_1,k_2,r_3)
		\end{equation*}
		for all Schwartz functions $\phi \in S(\R^3)$.
	\end{lemma}
	\begin{proof}
		
		Since $\ktran_{1,2}$ is continuous on $S'(\R^3)$, recall \autoref{thm:F_I_homeo}, we also have $\ktran_{1,2}G_\epsilon \to \ktran_{1,2}G$ in $S'(\R^3)$. It remains to calculate $\ktran_{1,2}G_\epsilon$. We do so in two steps, according to \autoref{eq:F12=F3-1F}.
		
		First, exploiting the fact that $G_\epsilon$ is a radial function, see \cite[Rem.\ 4.32]{PlPoStTa18}, we obtain for the 3D Fourier transform
		\begin{equation*} 
		\ktran G_\epsilon(\bk)
		= \norm{\bk}^{-1} (2\pi)^{-\frac32} \int_0^\infty \e^{s(-\epsilon + \i k_0)}	\sin(s\norm{\bk})\, ds,
		\end{equation*} 
		for all $\bk \in \R^3$.
		The integral on the right-hand side can be calculated via integration by parts as
		\begin{equation*}
		\ktran G_\epsilon(\bk)
		= \frac{(2\pi)^{-\frac32}}{\norm{\bk}^2-( k_0+ \i \epsilon)^2 }
		= \frac{(2\pi)^{-\frac32}}{k_3^2 - \kappa_\epsilon^2}.
		\end{equation*}
		Second, for $a \in \mathbb C$ with $\operatorname{Re} a>0$, formula 17.23.14 in \cite{GrRy07} states that
		$$
		\int_\R \frac{\e^{\i k_3r_3}}{k_3^2 + a^2}dk_3
		= \pi \frac{\e^{-a \abs{r_3}}}{a}.
		$$ 
		With respect to the above integral, 
		we use $a:=-\i \kappa_\epsilon$, which fulfills $\operatorname{Re} a>0$ since $\operatorname{Im} \kappa_\epsilon>0$.
		Then we obtain 
		\begin{equation} \label{star-star}
		\ktran_{1,2} G_\epsilon = \ktran_3^{-1} \ktran G_\epsilon = 
		(2\pi)^{-2} \int_{\R}\frac{\e^{\i k_3r_3}}{k_3^2 - \kappa_\epsilon^2}dk_3
		= -\frac{1}{4\pi} \frac{\e^{\i\kappa_\epsilon\abs{r_3}}}{\i\kappa_\epsilon}.
		\end{equation}
	\end{proof}

\emph{Proof of \autoref{thm:fourierdiffraction}:}
	Let $g\in L^p(\R^3)$, $p>1$, with $\supp(g)\subset\mathcal{B}_{r}$.
	Then, by the embedding in \autoref{embedding} and  density of $\mathcal {D}(\R^3)$ in $L^p(\R^3)$, $p \in [1,\infty)$,  we can find a sequence of functions $g_n \in\mathcal{D}(\R^3)$ with $\supp g_n \in \mathcal{B}_{r}$,
	such that $g_n\to g$ in $L^q(\R^3)$, for $q\in[1,p]$ if $p\in (1,\infty)$ and for $q\in[1,\infty)$ otherwise, as $n\to\infty$. For each $g_n$, consider
	\begin{equation*}
	(\Delta+k_0^2)u_n=-g_n
	\end{equation*}
	with the Sommerfeld radiation condition. The unique solution $u_n$ is given by the convolution $u_n = g_n*G$, see \cite[Chap.\ 2]{ColKre13}.
	From \autoref{eq:F12=F3-1F} and the convolution theorems \ref{thm:convolution_theorem} and \ref{thm:ldctS'}, for the 3D Fourier transform $\ktran $ and the partial Fourier transform $\ktran_3$, 
	it follows that
	\begin{equation} \label{eq:fourier}
	\begin{aligned}
	\ktran_{1,2} u_n&=\ktran_3^{-1} \ktran (g_n * G)
	= (2\pi)^{\frac32}\ktran_3^{-1} (\ktran g_n \ktran G)
	= 2\pi   \left(\ktran_3^{-1}\ktran g_n \right) \stackrel{3}{*} \left(\ktran_3^{-1}\ktran G \right)\\
	& = 2\pi   (\ktran_{1,2}g_n) \stackrel{3}{*}(\ktran_{1,2}G),
	\end{aligned}
	\end{equation}
	where $\stackrel{3}{*}$ denotes the partial convolution with respect to the third coordinate. 
	Now, for every $\phi \in S(\R^3)$, it follows by continuity of partial convolutions on $S^\prime(\R^3)$ 
	that
	\begin{equation*} 
	\begin{aligned}
	\ktran_{1,2}u_n[\phi]
	&= 2\pi\left(\ktran_{1,2}g_n \stackrel{3}{*}\ktran_{1,2}G\right)[\phi] = 2\pi \lim_{\epsilon \to 0} \left(\ktran_{1,2}g_n \stackrel{3}{*}\ktran_{1,2}G_\epsilon\right)[\phi]\\
	&= 2\pi \lim_{\epsilon \to 0} \int_{\R^3} \ktran_{1,2}G_\epsilon[M_3\ktran_{1,2}g_n \stackrel{3}{*} \phi] \, d(k_1,k_2,r_3),
	\end{aligned}
	\end{equation*}
	and by Fubini's theorem and \autoref{star-star} further
	\begin{equation*} 
	\begin{aligned}
	\ktran_{1,2}u_n[\phi]
	&= \frac{\i}{2}\lim_{\epsilon \to 0} \int_{\R^3}  \frac{\phi}{\kappa_\epsilon}\int_{\R}\e^{\i\kappa_\epsilon\left|r_3-x\right|}\ktran_{1,2}g_n(k_1,k_2,x)\,dx\,d(k_1,k_2,r_3).
	\end{aligned}
	\end{equation*}
	In order to change integration and limit, we will apply Lebesgue's dominated convergence theorem twice. 
	First, noting that $\ktran_{1,2}g_n(k_1,k_2,\cdot)\in L^{1}(\R)$ we conclude by Lebesgue's theorem that
	\begin{equation*} 
	\lim_{\epsilon\to 0}\int_{\R}\e^{\i\kappa_\epsilon\left|r_3-x\right|}\ktran_{1,2}g_n(k_1,k_2,x)\,dx=\int_{\R}\e^{\i\kappa\left|r_3-x\right|}\ktran_{1,2}g_n(k_1,k_2,x)\,dx,
	\end{equation*}
	for all $(k_1,k_2,r_3)\in\R^3$. It is then immediate that
	\begin{equation}\label{eq:point1}
	\lim_{\epsilon\to 0}\frac{\phi(k_1,k_2,r_3)}{\kappa_\epsilon(k_1,k_2)}\int_{\R}\e^{\i\kappa_\epsilon\left|r_3-x\right|}\ktran_{1,2}g_n(k_1,k_2,x)\,dx=\frac{\phi(k_1,k_2,r_3)}{\kappa(k_1,k_2)}\int_{\R}\e^{\i\kappa\left|r_3-x\right|}\ktran_{1,2}g_n(k_1,k_2,x)\,dx
	\end{equation}
	for $k_1^2 + k_2^2 \not = k_0^2$.
  In the following, we suppress the arguments of $\phi$ and $\kappa$ for the sake of simplicity.
  The function $(k_1,k_2)\mapsto \abs{\kappa}^{-1} =\abs{k_0^2-k_1^2-k_2^2}^{-1/2}$ 
  is locally integrable, since we have for $c>k_0$ in cylindrical coordinates that
    $$
    \int_{\mathcal B_c} \abs{k_0^2-k_1^2-k_2^2}^{-1/2} d(k_1,k_2)
    =2\pi \int_0^c \rho\abs{k_0^2-\rho^2}^{-1/2} d\rho
    = 2\pi (k_0 + \sqrt{c^2-k_0}).
    $$
	Using this integrability and $\left|\kappa\right|\leq\left|\kappa_\epsilon\right|$, it follows that
	\begin{equation} \label{eq:upper1}
	\abs{\frac{\phi}{\kappa_\epsilon}\int_{\R}\e^{\i\kappa_\epsilon\left|r_3-x\right|}\ktran_{1,2}g_n(k_1,k_2,x)\,dx}\le \frac{\abs{\phi}}{\abs{\kappa}} \int_\R \abs{\ktran_{1,2} g_n(k_1,k_2,x)}\,dx\leq\frac{\left|\phi\right|}{2\pi\left|\kappa\right|}\lVert g_n\rVert_{L^1} \in L^1(\R^3).
	\end{equation}
	Taking into account \autoref{eq:point1} and \autoref{eq:upper1} and applying Lebesgue's dominated convergence theorem again, then gives
	\begin{equation*}
	\ktran_{1,2}u_n[\phi]= 
	\frac{\i}{2}\int_{\R^3}\frac{\phi}{\kappa}\int_{\R}\e^{\i\kappa\left|r_3-x\right|}\ktran_{1,2}g_n(k_1,k_2,x)\,dx\, d(k_1,k_2,r_3).
	\end{equation*}
	Next, we can express
	\begin{equation} \label{eq:expr}
	\begin{aligned}
	&\int_{\R} \e^{\i\kappa\left|r_3-x\right|}\ktran_{1,2}g_n(k_1,k_2,x) \, dx\\
	&= \e^{\i\kappa r_3}\int_{-\infty}^{r_3} \e^{-\i\kappa x} \ktran_{1,2}g_n(k_1,k_2,x) \, dx + \e^{-\i\kappa r_3} \int^{+\infty}_{r_3} \e^{\i\kappa x} \ktran_{1,2}g_n(k_1,k_2,x) \, dx \\
	&= \e^{\i\kappa r_3}\int_{\R} \left(1-H_{r_3}(k_1,k_2,x) \right)\e^{-\i\kappa x} \ktran_{1,2}g_n(k_1,k_2,x) \, dx\\
	&\quad +\e^{-\i\kappa r_3} \int_{\R} H_{r_3}(k_1,k_2,x) \e^{\i\kappa x} \ktran_{1,2}g_n(k_1,k_2,x) \, dx\\
	&= \e^{\i\kappa r_3}\int_{\R} \e^{-\i\kappa x} \ktran_{1,2}((1-H_{r_3})g_n)(k_1,k_2,x) \, dx + \e^{-\i\kappa r_3} \int_{\R} \e^{\i\kappa x} \ktran_{1,2}(H_{r_3} g_n)(k_1,k_2,x) \, dx\\
	&=\sqrt{2\pi}\left(\e^{\i\kappa r_3}\ktran((1-H_{r_3})g_n)(k_1,k_2,\kappa) +\e^{-\i\kappa r_3}\ktran(H_{r_3}g_n)(k_1,k_2,-\kappa)\right),
	\end{aligned}
	\end{equation}
	where we recall that for $k_1,k_2\in\R$ such that $k_1^2+k_2^2>k_0^2$, the analytic continuations of $\ktran((1-H_{r_3})g_n)$ and $\ktran(H_{r_3}g_n)$ to $\C^3$ have to be considered. 
	Therefore, we have
	\begin{equation} \label{eq:smoothpot}
  \begin{aligned}
	&\ktran_{1,2}u_n[\phi]\\&=\frac{\i\sqrt{\pi}}{\sqrt{2}}\int_{\R^3}\frac{\phi}{\kappa}\left(\e^{\i\kappa r_3}\ktran((1-H_{r_3})g_n)(k_1,k_2,\kappa) +\e^{-\i\kappa r_3}\ktran(H_{r_3}g_n)(k_1,k_2,-\kappa)\right)\,d(k_1,k_2,r_3).
  \end{aligned}
	\end{equation}
	We consider $n\to\infty$ in \autoref{eq:smoothpot} 
	and start with the right-hand side. 
	Taking into account $g_n\to g$ in $L^1(\R^3)$ and $\mathcal F: L^1(\R^3) \rightarrow C_0(\R^3)$, we obtain the pointwise limit
	\begin{equation}\label{eq:point2}
	\begin{aligned}
	&\lim_{n\to\infty}\frac{\phi}{\kappa}\left(\e^{\i\kappa r_3}\ktran((1-H_{r_3})g_n)(k_1,k_2,\kappa) +\e^{-\i\kappa r_3}\ktran(H_{r_3}g_n)(k_1,k_2,-\kappa)\right)\\
	&=\frac{\phi}{\kappa}\left(\e^{\i\kappa r_3}\ktran((1-H_{r_3})g)(k_1,k_2,\kappa) +\e^{-\i\kappa r_3}\ktran(H_{r_3}g)(k_1,k_2,-\kappa)\right)
	\end{aligned}
	\end{equation}
	for $k_1^2 + k_2^2 \not = k_0^2$. 
	Further, we have
		\begin{equation*}
	\left|\frac{\phi}{\kappa}\left(\e^{\i\kappa r_3}\ktran((1-H_{r_3})g_n)(k_1,k_2,\kappa) +\e^{-\i\kappa r_3}\ktran(H_{r_3}g_n)(k_1,k_2,-\kappa)\right)\right|\leq \frac{\left|\phi\right|}{(2\pi)^{3/2}\left|\kappa\right|}\lVert g_n\rVert_{L^1} 
	\end{equation*}
	for $k_1^2 + k_2^2 \not = k_0^2$,
	which follows from \autoref{eq:upper1} and \autoref{eq:expr}.
	As $g_n\to g$ in $L^1(\R^3)$, we can then find a constant $C>0$ and $N\in\mathbb{N}$ such that
	\begin{equation} \label{eq:upper2}
	\left|\frac{\phi}{\kappa}\left(\e^{\i\kappa r_3}\ktran((1-H_{r_3})g_n)(k_1,k_2,\kappa) +\e^{-\i\kappa r_3}\ktran(H_{r_3}g_n)(k_1,k_2,-\kappa)\right)\right|\leq C\frac{\left|\phi\right|}{\kappa}\lVert g\rVert_{L^1}\in L^{1}(\mathbb{R}^3)
	\end{equation}
	for every $n\geq N$ and almost every $(k_1,k_2,r_3)\in\mathbb{R}^3$. With \autoref{eq:point2} and \autoref{eq:upper2} at our disposal, Lebesgue's dominated convergence theorem gives
	\begin{equation}\label{eq:conv1}
	\begin{aligned}
	&\lim_{n\to\infty}\int_{\R^3}\frac{\phi}{\kappa}\left(\e^{\i\kappa r_3}\ktran((1-H_{r_3})g_n)(k_1,k_2,\kappa) 
	+\e^{-\i\kappa r_3}\ktran(H_{r_3}g_n)(k_1,k_2,-\kappa)\right)\,d(k_1,k_2,r_3)\\
	&=\int_{\R^3}\frac{\phi}{\kappa}\left(\e^{\i\kappa r_3}\ktran((1-H_{r_3})g)(k_1,k_2,\kappa) 
	+\e^{-\i\kappa r_3}\ktran(H_{r_3}g)(k_1,k_2,-\kappa)\right)\,d(k_1,k_2,r_3).
	\end{aligned}
	\end{equation}
	
	Next, we consider the convergence of the left-hand side in \autoref{eq:smoothpot}. 
	From \cite[Thm. 6, Rem. 1]{Gut04}, 
	it follows that the unique solution $u$ of \autoref{eq:Born2} for $g \in L^{q_1}(\R^3)$
	satisfying the Sommerfeld radiation condition, 
	fulfills
	\begin{equation} \label{eq:normest}
	\lVert u\rVert_{L^{q_2}}\leq C(k_0)\lVert g\rVert_{L^{q_1}}
	\end{equation}
	if $q_1 < \frac32$, $q_2 > 3$ and $\frac12 \le \frac{1}{q_1} - \frac{1}{q_2} \le \frac23$.
	In particular, 
	$q_2 = 3 + \varepsilon$ 
	and 
	$q_1 \in [9+3\varepsilon/(9+2\varepsilon),6+2\varepsilon/(5+\varepsilon)]$ 
	fulfill the above requirements for $\varepsilon >0$ small enough.
	Thus, for every $p>1$, we can find $1<q_1< p$ by setting $\varepsilon$ small enough.
	For $g \in L^p(\R^3)$, $p>1$ supported in $\mathcal B_r$, we know by the embedding in \autoref{embedding}
	that $g \in L^{q_1}(\R^3)$, for $1<q_1 < p$.
	Thus, \autoref{eq:normest} implies that $u_n\to u$ in $L^{q_2}(\R^3)$, since $g_n\to g$ in $L^{q_1}(\R^3)$. 
	In particular, $u_n\to u$ in $S^\prime(\R^3)$. 
	Then the continuity of $\mathcal{F}_{1,2}$ on $S^\prime(\R^3)$ gives
	\begin{equation}\label{eq:conv2}
	\begin{aligned}
	\mathcal{F}_{1,2}u[\phi]&=\lim_{n\to\infty}\mathcal{F}_{1,2}u_n[\phi]
	\end{aligned}
	\end{equation}
	for all $\phi \in S (\R^3)$ and by \autoref{eq:conv1} finally
	\begin{equation*}
	\begin{aligned}
	&\mathcal{F}_{1,2}u[\phi]
	\\&=\frac{\i\sqrt{\pi}}{\sqrt{2}}\int_{\R^3}\phi\underbrace{\frac{1}{\kappa}\left(\e^{\i\kappa r_3}\ktran((1-H_{r_3})g)(k_1,k_2,\kappa) +\e^{-\i\kappa r_3}\ktran(H_{r_3}g)(k_1,k_2,-\kappa)\right)}_{\in L^{1}_{\mathrm{loc}}(\R^3)}\,d(k_1,k_2,r_3).
	\end{aligned}
	\end{equation*}
	for all $\phi\in\mathcal{D}(\R^3)$. Then the assertion follows 
	by applying the du Bois--Reymond lemma, see \cite[Lem. 3.2]{Gru09}.
\end{proof}	

\subsection{Proof of \autoref{thm:jacobian}} \label{sec:jacobian}
In this subsection we compute the determinant of the Jacobian  $\left\lvert \nabla T_\pm (k_1,k_2,t) \right\rvert$.

\begin{proof}
By definition we have
\begin{equation} \label{eq:D}
	\left\lvert \nabla T_\pm (k_1,k_2,t) \right\rvert
	=
	\abs{\det \begin{pmatrix} \frac{\partial}{\partial k_1} R_{\bn(t),\alpha(t)}\bh, \frac{\partial}{\partial k_2} R_{\bn(t),\alpha(t)}\bh, \frac{\partial}{\partial t} R_{\bn(t),\alpha(t)}\bh\end{pmatrix}},
\end{equation}
where the first and second columns are given by
	\begin{equation*}
		\begin{aligned}
			\frac{\partial}{\partial k_1} R_{\bn,\alpha}\bh 
			=&  R_{\bn,\alpha}\frac{\partial}{\partial k_1}\bh 
			= R_{\bn,\alpha} \begin{pmatrix} 1\\ 0\\ \mp k_1/\kappa \end{pmatrix}, \\
			\frac{\partial}{\partial k_2} R_{\bn,\alpha}\bh 
			=&  R_{\bn,\alpha}\frac{\partial}{\partial k_2}\bh 
			= R_{\bn,\alpha} \begin{pmatrix} 0\\ 1\\ \mp k_2/\kappa \end{pmatrix}.
		\end{aligned}
	\end{equation*}
	Multiplying the argument of the determinant in \autoref{eq:D} 
	with the orthogonal matrix $R_{\bn(t),-\alpha(t)} = R_{\bn(t),\alpha(t)}^{-1}$ 
	does not change the determinant. 
	Hence we get
	\begin{equation}
		\begin{aligned}
			\left\lvert \nabla T_\pm (k_1,k_2,t) \right\rvert = {}&
			\abs{\det \begin{pmatrix} \frac{\partial}{\partial k_1}\bh, \frac{\partial}{\partial k_2}\bh, R_{\bn(t),-\alpha(t)}  \frac{\partial}{\partial t} R_{\bn(t),\alpha(t)}\bh\end{pmatrix}}
			\\
			= {}&
			\abs{\det \begin{pmatrix} 
					1&0&v_1\\
					0&1&v_2\\
					\mp\frac{k_1}{\kappa}&\mp\frac{k_2}{\kappa}& v_3
			\end{pmatrix}}
			\\ ={}&
			\abs{ \frac{k_1}{\kappa}v_1 + \frac{k_2}{\kappa}v_2 \pm v_3 },
		\end{aligned}
  	\label{eq:D-sum}
	\end{equation}
	where 
	\begin{equation*}
		\bv = (v_1,v_2,v_3)^\tT
		\coloneqq R_{\bn(t),-\alpha(t)} \,\frac{\partial}{\partial t} R_{\bn(t),\alpha(t)}\bh.
	\end{equation*}
	In the rest of this proof we calculate $\bv$.
	To shorten our notation, we suppress the dependency on $t$ and set
	$\mathrm{c} \coloneqq \cos\left(\alpha(t) \right)$ and $\mathrm{s} \coloneqq \sin\left(\alpha(t) \right)$.
	We set
	\begin{equation*}
		\begin{aligned}
			\bz\coloneqq 
			\frac{\partial}{\partial t} 
			R_{\bn(t),\alpha(t)}\bh 
			={} & 
			(1-\mathrm{c}) \left(\left(\bn'\cdot\bh\right) \bn 
			+ \left(\bn\cdot\bh\right) \bn'\right)
			+\alpha' \mathrm{s} \left( \left(\bn \cdot \bh \right)\bn- \bh \right)
			\\&
			- \alpha' \mathrm{c} \left( \bn \times \bh \right)
			- \mathrm{s} \left( \bn' \times \bh \right)
			,
		\end{aligned}
	\end{equation*}
	where the second equality is a consequence of Rodrigues' rotation formula
\begin{equation}\label{eq:rnk}
		R_{\bn,\alpha}\by
		=  (1-\cos(\alpha)) \left(\bn \cdot \by \right)\bn + \cos(\alpha)\, \by 
		- \sin(\alpha) \left( \bn \times \by \right).
\end{equation}
	Then we have
	\begin{equation*}
		\bv=
		R_{\bn(t),-\alpha(t)}\bz\\
		=
		(1-\mathrm{c}) \left(\bn \cdot \bz \right)\bn 
		+ \mathrm{c} \, \bz 
		+ \mathrm{s} \left( \bn \times \bz \right).
	\end{equation*}
	Since $\bn\cdot\bn=1$ and consequently $\bn\cdot\bn'=0$, 	we obtain
	\begin{equation*}
		\begin{aligned}
			\bv =
			{}& R_{\bn(t),-\alpha(t)}\bz
			=
			(1-\mathrm{c})\,\bn \left( (1-\mathrm{c})\left(\bn'\cdot\bh\right) 
			-\mathrm{s}\, \bn\cdot\left( \bn' \times \bh \right) \right)
			\\&
			+\mathrm{c} \left( 
			(1-\mathrm{c}) 
			\left(
			\left(\bn'\cdot\bh\right) \bn 
			+ \left(\bn\cdot\bh\right) \bn'\right)
			+ \alpha' \mathrm{s} \left( \left(\bn \cdot \bh \right) \bn- \bh \right)
			- \alpha' \mathrm{c} \left( \bn \times \bh \right)
			- \mathrm{s} \left( \bn' \times \bh \right) 
			\right)
			\\
			&
			+ \mathrm{s} \left( (1-\mathrm{c}) \left(\bn\cdot\bh\right) \left(\bn\times\bn'\right)
			- \alpha'\,\mathrm{s} \left(\bn\times\bh\right)
			-\alpha'\,\mathrm{c} \left(\bn\times\left(\bn\times\bh\right)\right)
			-\mathrm{s} \left(\bn\times \left(\bn'\times\bh\right)\right) 
			\right).
		\end{aligned}
	\end{equation*}
	Expanding the vector triple products in the last line using the Grassmann identity,
	we see that
	\begin{equation*}
		\begin{aligned}
			\bv
			={}&
			(1-\mathrm{c})\,\bn \left( (1-\mathrm{c})\left(\bn'\cdot\bh\right) 
			-\mathrm{s}\, \bn\cdot\left( \bn' \times \bh \right) \right)
			+\mathrm{c}  (1-\mathrm{c}) \left(\left(\bn'\cdot\bh\right) \bn 
			+ \left(\bn\cdot\bh\right) \bn'\right)\\
			&
			+\alpha'\, \mathrm{c} \, \mathrm{s} \left( \left(\bn \cdot \bh \right)\bn- \bh \right)
			- \alpha'\, \mathrm{c}^2 \left( \bn \times \bh \right) 
			- \mathrm{c} \, \mathrm{s} \left( \bn' \times \bh \right)
			+ \mathrm{s} (1-\mathrm{c}) \left(\bn\cdot\bh\right) \left(\bn\times\bn'\right)
			\\&
			- \alpha' \mathrm{s}^2 \left(\bn\times\bh\right)
			-\alpha'\mathrm{s}\, \mathrm{c} \left(\bn\left(\bn\cdot\bh\right) - \bh\right)
			-\mathrm{s}^2  \left(\bn'\left(\bn\cdot\bh\right)\right) .
		\end{aligned}
	\end{equation*}
	Sorting the terms, we obtain 
	\begin{equation*}
		\begin{aligned}
			\bv
			={}&
			\bn \left(
			(1-\mathrm{c})^2\left(\bn'\cdot\bh\right) 
			-\mathrm{s}\,(1-\mathrm{c}) \,\bn\cdot\left( \bn' \times \bh \right) 
			+\mathrm{c}\,(1-\mathrm{c}) \left(\bn'\cdot\bh\right)
			\right)
			\\&
			+\bn' \left(
			\mathrm{c} (1-\mathrm{c}) \left(\bn\cdot\bh\right)
			-\mathrm{s}^2 \left(\bn\cdot\bh\right)
			\right)
			- \alpha' \left( \bn \times \bh \right)
			- \mathrm{s} \, \mathrm{c} \left( \bn' \times \bh \right)
			+ \mathrm{s} (1-\mathrm{c}) \left(\bn\cdot\bh\right) \left(\bn\times\bn'\right)
			\\
			={}&
			\bn \left(
			(1-\mathrm{c})\left(\bn'\cdot\bh\right) 
			-\mathrm{s}\,(1-\mathrm{c}) \,\bn\cdot\left( \bn' \times \bh \right) 
			\right)
			-\bn' \left(1-\mathrm{c} \right) \left(\bn\cdot\bh\right)
			\\&
			- \alpha' \left( \bn \times \bh \right)
			- \mathrm{s} \, \mathrm{c} \left( \bn' \times \bh \right)
			+ \mathrm{s} (1-\mathrm{c}) \left(\bn\cdot\bh\right) \left(\bn\times\bn'\right).
		\end{aligned}
	\end{equation*}
	If $\bn'=\mathbf0$, then $\bv =  - \alpha' \left( \bn \times \bh \right)$.
	Otherwise, the vectors $\bn$, $\bn'$ and $\bn\times\bn'$ are orthogonal and Lagrange's identity yields
	\begin{align*}
		\bn'\times\bh
		&= \bn\left(\bn\cdot \left(\bn'\times\bh\right) \right) 
		+ \frac{1}{\norm{\bn\times\bn'}^2}\left(\bn\times\bn'\right)
		\left(\left(\bn\times\bn'\right) \cdot \left(\bn'\times\bh\right) \right)
		\\
		&= \bn\left(\bn\cdot \left(\bn'\times\bh\right) \right) 
		- \left(\bn\times\bn'\right)
		\left(\bn\cdot \bh \right) 		.
	\end{align*}
	Hence, we obtain 
	\begin{align}
			\bv
			={}&
			\bn \left(
			(1-\mathrm{c})\left(\bn'\cdot\bh\right) 
			-\mathrm{s}\,(1-\mathrm{c}) \,\bn\cdot\left( \bn' \times \bh \right) 
			\right)\notag \\
			& -\bn' \left(1-\mathrm{c} \right) \left(\bn\cdot\bh\right)
			- \alpha' \left( \bn \times \bh \right)
      - \mathrm{s} \, \mathrm{c} \left( \bn\left(\bn\cdot \left(\bn'\times\bh\right) \right) 
			-		
			\left(\bn\times\bn'\right)
			\left(\bn\cdot \bh \right)  \right)
			\notag \\&
			+ \mathrm{s} \left(1-\mathrm{c} \right) 
			\left(\bn\cdot\bh\right) \left(\bn\times\bn'\right)
			\notag \\
			=
			{}&
			(1-\mathrm{c})
      \left(
			\bn\, (\bn'\cdot\bh) 
      -\bn'  \left(\bn\cdot\bh\right)
      \right)
			-\bn\, \mathrm{s} \left(\bn\cdot\left( \bn' \times \bh \right) \right)
			- \alpha'\,\left( \bn \times \bh \right) 
			+  \mathrm{s} \left(\bn\cdot\bh\right) \left(\bn\times\bn'\right)
			.
  	\label{eq:Rds}
	\end{align}
  By \autoref{eq:D-sum}, we have
  \begin{equation*}
   \left\lvert \nabla T_\pm (k_1,k_2,t) \right\rvert
    = \abs{ 
    \begin{pmatrix}v_1\\v_2\\v_3\end{pmatrix}
    \cdot \begin{pmatrix} k_1/\kappa\\ k_2/\kappa\\ \pm 1\end{pmatrix}  }
    = \frac1\kappa \abs{ 
    \begin{pmatrix}v_1\\v_2\\v_3\end{pmatrix}
    \cdot \begin{pmatrix} k_1\\ k_2\\ \pm\kappa\end{pmatrix}  }
    = \frac1\kappa \left\lvert \bv\cdot \left(\bh + \begin{pmatrix}0\\0\\k_0\end{pmatrix} \right) \right\rvert
    = \frac{k_0}{\kappa} \abs{v_3},
  \end{equation*}
  where the last equality follows from 
  $
  \bv \cdot \bh = 0.
  $
  Replacing $v_3$ by the third component of \autoref{eq:Rds} yields
  \begin{equation*}
  \begin{aligned}
    \left\lvert \nabla T_\pm (k_1,k_2,t) \right\rvert
    = \frac{k_0}{\kappa}
    {}& \big|
    \left( (1-\mathrm{c}) \left(
    n_3\, 
    (\bn'\cdot\bh) 
    -n_3'  \left(\bn\cdot\bh\right)
    \right)
    -n_3\mathrm{s} \,\bn\cdot\left( \bn' \times \bh \right) 
    \right)
    \\&
    - \alpha' \left( n_1k_2-n_2k_1 \right) 
    +  \mathrm{s} \left(\bn\cdot\bh\right) \left(n_1n_2'-n_2n_1'\right)
    \big|
    ,
  \end{aligned}
  \end{equation*}
  which proves the first assertion. The Jacobian determinant is in $L^1(\mathcal{U})$, because $(k_1,k_2)\mapsto 1/\kappa(k_1,k_2)$ is locally integrable on $\R^2$ while the remaining expression is bounded on $\mathcal{U}$.
\end{proof}

\subsection*{Acknowledgements} This work is supported by the Austrian Science Fund (FWF) within SFB F68 (``Tomography
across the Scales''), Projects F68-06 and F68-07. 
G.S. acknowledges funding by the DFG under Germany's Excellence Strategy – The Berlin Mathematics Research Center MATH+ (EXC-2046/1,  Projektnummer:  390685689).
\section*{References}
\renewcommand{\i}{\ii}
\printbibliography[heading=none]

\end{document}